\documentclass[10pt]{amsart}

\usepackage{amsmath,amssymb,amscd,amsthm,amsxtra}

\usepackage{graphicx, mathrsfs}
\usepackage{amssymb,amsmath,amsthm}
\usepackage{mathrsfs,dsfont}

\makeatletter
\DeclareRobustCommand\widecheck[1]{{\mathpalette\@widecheck{#1}}}
\def\@widecheck#1#2{%
   \setbox\z@\hbox{\m@th$#1#2$}%
   \setbox\tw@\hbox{\m@th$#1%
      \widehat{%
         \vrule\@width\z@\@height\ht\z@
         \vrule\@height\z@\@width\wd\z@}$}%
   \dp\tw@-\ht\z@
   \@tempdima\ht\z@ \advance\@tempdima2\ht\tw@ \divide\@tempdima\thr@@
   \setbox\tw@\hbox{%
      \raise\@tempdima\hbox{\scalebox{1}[-1]{\lower\@tempdima\box\tw@}}}%
   {\ooalign{\box\tw@ \cr \box\z@}}}
\makeatother

\headheight=8pt
\topmargin=0pt
\textheight=624pt
\textwidth=432pt
\oddsidemargin=18pt
\evensidemargin=18pt

\newtheorem{theorem}{Theorem} [section]

\newtheorem{lemma}[theorem]{Lemma}
\newtheorem{proposition}[theorem]{Proposition}
\newtheorem{remark}[theorem]{Remark}

\newtheorem{corollary}[theorem]{Corollary}

\begin{document}

\title[Bounds on Sobolev norms for NLS on $\mathbb{R}$]{Bounds on the growth of high Sobolev norms of solutions to Nonlinear Schr\"{o}dinger Equations on $\mathbb{R}$}
\author{Vedran Sohinger}
\address{Massachusetts Institute of Technology, Department of Mathematics}
\email{\tt vedran@math.mit.edu}

\maketitle
\begin{abstract}
In this paper, we consider the cubic nonlinear Schr\"{o}dinger equation, and the Hartree equation, with sufficiently regular convolution potential, both on the real line. We are interested in bounding the growth of high Sobolev norms of solutions to these equations. Since the cubic NLS is completely integrable, it makes sense to bound only the fractional Sobolev norms of solutions, whose initial data is of restricted smoothness. For the Hartree equation, we consider all Sobolev norms. For both equations, we derive our results by using an appropriate frequency decomposition. In the case of the cubic NLS, this method allows us to recover uniform bounds on the integral Sobolev norms, up to a factor of $t^{0+}$. For the Hartree equation, we use the same method as in our previous paper \cite{SoSt1}, and the improved Strichartz estimate to obtain a better bound than the one that was obtained in the periodic setting in the mentioned work.
\end{abstract}

\section{Introduction.}

\subsection{Statement of the problem and of the main results:}

Given $s \geq 1$ real, we study the defocusing cubic nonlinear Schr\"{o}dinger initial value problem on $\mathbb{R}$:

\begin{equation}
\label{eq:cubicnls}
\begin{cases}
iu_t + \Delta u = |u|^2u,\, x \in \mathbb{R},\, t \in \mathbb{R}\\
u|_{t=0}= \Phi \in H^s(\mathbb{R}).
\end{cases}
\end{equation}

\vspace{3mm}

The equation (\ref{eq:cubicnls}) arises in the Gross-Pitaevski scaling limit of large systems of bosons, and in geometric optics \cite{Sch,SuSu,ZM}. The one-dimensional cubic NLS on $\mathbb{R}$ has a specific physical meaning, and it is used to describe a Bose gas in elongated traps and the so-called cigar-shaped Bose-Einstein condensates \cite{LSY}. For a rigorous mathematical derivation of the equation from many-body quantum dynamics, the reader should consult \cite{AGT}. A rigorous derivation of the two-dimensional version can be found in \cite{KSchSta}.

\vspace{2mm}

Furthermore, we study the Hartree initial value problem on $\mathbb{R}$

\begin{equation}
\label{eq:Hartree}
\begin{cases}
i u_t + \Delta u=(V*|u|^2)u, x \in \mathbb{R}, t \in \mathbb{R}\\
u|_{t=0}=\Phi \in H^s(\mathbb{R}).
\end{cases}
\end{equation}

\vspace{2mm}

The assumptions that we have on $V$ are the following:

\begin{enumerate}
\item[(i)] $V \in L^1(\mathbb{R})$
\item[(ii)] $V \geq 0$
\item[(iii)] $V$ is even.
\end{enumerate}

\vspace{2mm}

The Hartree equation appears in the mean-field limit of large systems of bosons \cite{Sch,FL}.

\vspace{2mm}

Since the problem  $(\ref{eq:cubicnls})$ is energy-subcritical and defocusing, for fixed $s\geq 1$, (\ref{eq:cubicnls}) has a unique global solution in $H^s$ \cite{Tao}. In this paper, we are interested in estimating $\|u(t)\|_{H^s}$ from above. We recall from \cite{FadTak,ZM} that (\ref{eq:cubicnls}) is completely integrable.
Therefore, if $s=k$ is a positive integer, one can deduce, by using a fixed finite number of conserved quantities that there exists a function $B_k:H^k \rightarrow \mathbb{R}$ such that for all $t \in \mathbb{R}$:

\begin{equation}
\label{eq:Integrals}
\|u(t)\|_{H^k}\leq B_k(\Phi).
\end{equation}

From the preceding observation, it makes sense to consider only the case when \emph{s is not an integer}.
One notes that the uniform bounds for $H^s$ norms when $s$ is not an integer don't follow from the uniform bounds on the integer Sobolev norms if we are assuming only that $\Phi \in H^s(\mathbb{R})$.

\vspace{3mm}

Given a real number $x$, we denote by $x+$ and $x-$ expressions of the form $x+\epsilon$ and $x-\epsilon$ respectively, where $0<\epsilon \ll 1$. With this notation, the result that we prove for (\ref{eq:cubicnls}) is:

\vspace{3mm}

\begin{theorem}(Bound for the Cubic NLS)
\label{Theorem 1}
Suppose $s>1$ is not an integer. Let $\alpha:=s-\lfloor s \rfloor$ denote the fractional part of $s$.
Suppose $\Phi \in H^s(\mathbb{R})$, and let $u$ denote the global solution to the corresponding problem (\ref{eq:cubicnls}).
Then, there exists a continuous function $F_s: H^s \rightarrow \mathbb{R}$ such that for all $t \in \mathbb{R}$:
$$\|u(t)\|_{H^s} \leq F_s(\Phi)(1+|t|)^{\alpha+}.$$
\end{theorem}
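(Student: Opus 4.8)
The plan is to exploit the complete integrability indirectly: rather than controlling the fractional norm $\|u(t)\|_{H^s}$ directly, I would control it by interpolating between an integer norm, which is uniformly bounded by (\ref{eq:Integrals}), and a higher integer norm restricted to the high-frequency part of the solution, whose growth we can estimate by an upside-down-$I$-method / frequency decomposition argument. Concretely, write $k=\lfloor s\rfloor$, so $s=k+\alpha$ with $0<\alpha<1$. Decompose $u=u_{lo}+u_{hi}$ with $u_{lo}=P_{\leq N}u$ and $u_{hi}=P_{>N}u$ for a dyadic parameter $N\geq 1$ to be optimized at the end. The low-frequency piece satisfies $\|u_{lo}(t)\|_{H^s}\lesssim N^{\alpha}\|u(t)\|_{H^k}\lesssim N^{\alpha}B_k(\Phi)$ by Bernstein and (\ref{eq:Integrals}), so all the work is in bounding $\|u_{hi}(t)\|_{H^s}$, and for this I only need a crude bound, since $u_{hi}$ carries a gain of a negative power of $N$.

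For the high-frequency part I would set up the standard monotonicity-formula machinery: introduce the Fourier multiplier $D$ with symbol equal to $1$ for $|\xi|\leq N$ and to $(|\xi|/N)^{\sigma}$ for $|\xi|\geq N$, where $\sigma$ is chosen slightly larger than $s$ (say $\sigma=k+1$, an integer strictly above $s$, so that the energy $\|Du(t)\|_{L^2}$ is comparable to $\|u_{lo}\|_{L^2}+N^{-\sigma}\|u_{hi}\|_{H^{\sigma}}$ and in particular dominates $N^{\alpha-\sigma}\|u_{hi}\|_{H^s}$ up to constants). Differentiating $\frac{d}{dt}\|Du(t)\|_{L^2}^2$ and using the equation, the local-in-time increment of this modified energy over a unit time interval is bounded, after doing Fourier analysis on the resulting quadrilinear form (three factors high, in a sense, or rather the usual resonance analysis), by $N^{-\beta}$ times a power of the controlled norms $\|u\|_{H^k}$, for some $\beta>0$; here is exactly where the improved Strichartz estimate on $\mathbb{R}$ is used, just as in \cite{SoSt1} but with a better exponent. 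Iterating over $\sim |t|$ such unit intervals gives $\|Du(t)\|_{L^2}^2 \lesssim \|D\Phi\|_{L^2}^2 + |t|\,N^{-\beta}C(\Phi)$, hence
$$N^{2(\alpha-\sigma)}\|u_{hi}(t)\|_{H^s}^2 \lesssim \|Du(t)\|_{L^2}^2 \lesssim C(\Phi)\big(N^{2(\alpha-\sigma)} + |t|N^{-\beta}\big),$$
so that $\|u_{hi}(t)\|_{H^s} \lesssim C(\Phi)\big(1 + |t|^{1/2}N^{\sigma-\alpha-\beta/2}\big)$.

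Combining the two pieces, $\|u(t)\|_{H^s}\lesssim C(\Phi)\big(N^{\alpha} + |t|^{1/2}N^{\sigma-\alpha-\beta/2}\big)$; optimizing in $N$ by balancing the two terms gives $N\sim |t|^{1/(2(\sigma-2\alpha-\beta/2))}$ (assuming $\beta$ is small, the exponent in the second term exceeds $\alpha$, so the minimization is genuine), which after arithmetic produces a bound of the form $\|u(t)\|_{H^s}\lesssim C(\Phi)(1+|t|)^{\alpha+}$ — the loss $\alpha+$ rather than a clean $\alpha$ coming from the fact that $\beta>0$ can be taken as small as we like but not zero, and from the $t^{0+}$ slack familiar from these arguments. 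The main obstacle, and the step requiring real care, is the multilinear estimate for the energy increment: one must extract a genuine positive power $N^{-\beta}$ from the frequency-localized quadrilinear form $\int (D^2u_{high}-\ldots)\,\overline{u}\,u\,\overline{u}$, using that at least one frequency is $\gtrsim N$ and carefully exploiting the bilinear/improved Strichartz estimate on $\mathbb{R}$ together with the a priori control of lower Sobolev norms; degenerate frequency configurations (all four frequencies comparable, or near-resonant interactions) are where the exponent $\beta$ — and hence the final exponent — is pinned down. A secondary technical point is that $F_s$ must be continuous on $H^s$, which follows by tracking that every constant $C(\Phi)$ above is a continuous (indeed polynomial-in-norm) function of $\|\Phi\|_{H^s}$, using $\|\Phi\|_{H^s}\geq\|\Phi\|_{H^k}$ and the conservation laws.
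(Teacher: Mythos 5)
Your opening moves match the paper's: Littlewood--Paley split at scale $N$, control of the low-frequency piece by $N^{\alpha}B_k(\Phi)$ via the integer conserved quantities, and the recognition that the improved (bilinear) Strichartz estimate on $\mathbb{R}$ is the source of the $N$-gain. But the machinery you set up for the high-frequency piece does not work, and the final optimization is vacuous.

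First, the multiplier $D$ with symbol $(|\xi|/N)^{\sigma}$ for $|\xi|\geq N$ and $\sigma=k+1>s$ makes the modified energy $\|D\Phi\|_{L^2}^2 = \|\Phi_{lo}\|_{L^2}^2 + N^{-2\sigma}\|\Phi_{hi}\|_{\dot H^{\sigma}}^2$ infinite for generic $\Phi\in H^s\setminus H^{\sigma}$, so the quantity you propose to propagate is not finite at time zero and its size cannot be tracked continuously in $\|\Phi\|_{H^s}$; the usual upside-down $I$-method takes $\sigma=s$ precisely to avoid this. Second, and more centrally, the claimed increment bound of the form $\lesssim N^{-\beta}\cdot(\text{power of }\|u\|_{H^{k}})$ is not available: in the dominant frequency configuration where the two largest dyadic frequencies are $\sim M\gg N$, the symmetrized multiplier is $\sim (M/N)^{2\sigma}$, which grows like $M^{2\sigma-2k}$ relative to $\langle\xi\rangle^{2k}$ and therefore cannot be absorbed by the \emph{uniformly controlled} $H^{k}$ norms alone; one is forced to pay with $\|Du\|_{L^2}$ itself (equivalently $\|u\|_{H^s}$), so the increment estimate is intrinsically of the multiplicative form $E(t_0+\delta)\leq(1+CN^{-\beta})E(t_0)+\cdots$, not the additive one you wrote, and hence cannot be iterated indefinitely in $t$. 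Third, even granting both of the above, your final expression $\|u(t)\|_{H^s}\lesssim C(\Phi)\bigl(N^{\alpha}+|t|^{1/2}N^{\sigma-\alpha-\beta/2}\bigr)$ has \emph{both} terms increasing in $N$ (since $\sigma-\alpha-\beta/2>0$), so there is nothing to balance; the minimum over $N\geq 1$ is at $N=1$ and yields $|t|^{1/2}$, not $(1+|t|)^{\alpha+}$.

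The paper's Theorem \ref{Theorem 1} does not use a modified-energy multiplier at all. It differentiates $\|Qu(t)\|_{H^s}^2$ directly, proves (Proposition \ref{Proposition 3.4}) the \emph{multiplicative} increment bound
$\|Qu(t_0+\delta)\|_{H^s}^2 \leq (1+C(\Phi)N^{-(1-)})\|Qu(t_0)\|_{H^s}^2 + K(N,\Phi)$
with $K\sim N^{2\alpha-1+}B_k^2(\Phi)$, and then iterates this \emph{only} $n\lesssim N^{1-}$ times so that the compounded factor $(1+CN^{-(1-)})^n$ stays $O(1)$. This gives $\|u(t)\|_{H^s}\lesssim C(\Phi)N^{\alpha}$ for $t\lesssim N^{1-}\delta$, and the final bound is obtained by choosing $N\sim (T/\delta)^{1+}$. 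The ingredient you are missing is this deliberately finite iteration length tied to the $N^{-(1-)}$ gain, which is what turns the multiplicative bound into a polynomial-in-$T$ estimate with exponent $\alpha$ rather than a multiple of $s$. If you want to salvage your outline, replace the $D$ multiplier by $\langle\xi\rangle^s\chi_{|\xi|\geq N}$ (i.e., work with $\|Qu\|_{H^s}$ itself), accept that the increment is multiplicative, iterate only $\lesssim N^{1-}$ times, and then optimize $N$ against $T$.
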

Theorem \ref{Theorem 1} gives a solution to an open problem that was mentioned on the Dispersive Wiki Website \cite{DW}.

\vspace{2mm}

Unlike the one-dimensional cubic NLS, the Hartree equation doesn't have infinitely many conserved quantities. The following quantities are conserved under the evolution of $(\ref{eq:Hartree})$:

$$M(u(t))=\int |u(x,t)|^2 dx\,\,\,\mbox{\emph{(Mass)}}$$
and
$$E(u(t))=\frac{1}{2}\int |\nabla u(x,t)|^2 dx + \frac{1}{4}\int (V*|u|^2)(x,t) |u(x,t)|^2 dx \,\,\,\mbox{\emph{(Energy)}}$$
We hence deduce that $\|u(t)\|_{H^1}$ is uniformly bounded whenever $u$ is a solution of $(\ref{eq:Hartree})$.
The bound that we prove is:

\begin{theorem}(Bound for the Hartree equation)
\label{Theorem 2}
Let $s\geq 1$, and let $u$ be the global solution of $(\ref{eq:Hartree})$. Then, there exists a function $C_s$, continuous on $H^1$ such that for all $t \in \mathbb{R}:$

\begin{equation}
\label{eq:hartreebound}
\|u(t)\|_{H^s}\leq
C_s(\Phi)(1+|t|)^{\frac{1}{3}s+}\|\Phi\|_{H^s}.
\end{equation}
\end{theorem}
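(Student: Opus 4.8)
The plan is to run the \emph{upside-down $I$-method} of \cite{SoSt1}, now fed by the improved (bilinear) Strichartz estimate on $\mathbb{R}$ in place of its periodic counterpart. Fix a large dyadic parameter $N=N(t)$ and let $D=D_N$ be the Fourier multiplier whose symbol $d(\xi)$ is smooth, radial, nondecreasing in $|\xi|$, equals $1$ for $|\xi|\le N$, and equals $(|\xi|/N)^s$ for $|\xi|\ge 2N$. Two elementary facts are recorded: $\|v\|_{H^s}\lesssim N^s\|Dv\|_{L^2}$ for all $v$, and $\|D\Phi\|_{L^2}\lesssim\|\Phi\|_{L^2}+N^{-s}\|\Phi\|_{H^s}$. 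Moreover, conservation of mass and energy together with $V\ge 0$ (so that $\int (V*|u|^2)|u|^2\ge 0$) give $\sup_{t}\|u(t)\|_{H^1}\le R:=C(\|\Phi\|_{H^1})$, whence the subcritical local theory supplies a lifespan $\delta=\delta(R)>0$ independent of $\|\Phi\|_{H^s}$ and the bound $\|u\|_{X^{s,b}([t_0,t_0+\delta])}\lesssim C(R)\,\|u(t_0)\|_{H^s}$ for a suitable $b>\tfrac12$. It therefore suffices to control the growth of the modified energy $\mathcal{E}(t):=\|Du(t)\|_{L^2}^2$.

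Since $D$ commutes with $\Delta$ and $i\Delta$ is skew-adjoint, the kinetic term drops out of $\tfrac{d}{dt}\mathcal{E}$ and one is left with $\tfrac{d}{dt}\mathcal{E}(t)=-2\operatorname{Im}\big\langle D\big((V*|u|^2)u\big),\,Du\big\rangle$. Passing to the Fourier side and symmetrizing in the $u$-frequencies $\xi_1,\xi_3$ and the $\bar u$-frequencies $\xi_2,\xi_4$ yields a $4$-linear form with multiplier
$$M_4(\xi_1,\xi_2,\xi_3,\xi_4)=\big(\text{a $\widehat V$-factor bounded by }\|V\|_{L^1}\big)\cdot\big(d(\xi_1)^2-d(\xi_2)^2+d(\xi_3)^2-d(\xi_4)^2\big)$$
on the hyperplane $\xi_1-\xi_2+\xi_3-\xi_4=0$; boundedness of $\widehat V$ is all that is used about the potential. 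On that hyperplane the two largest magnitudes $N_{(1)}\ge\cdots\ge N_{(4)}$ are comparable, and when the two largest come from opposite factors they nearly cancel in $M_4$, so the mean value theorem applied to $d^2$ gives the derivative gain $|M_4|\lesssim \|V\|_{L^1}\,(N_{(3)}/N_{(1)})\,d(N_{(1)})^2$; this is the source of all smoothing.

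The main step is the multilinear estimate. Integrating $\tfrac{d}{dt}\mathcal{E}$ over $[t_0,t_0+\delta]$, one performs a Littlewood--Paley decomposition of the four factors and, in each frequency configuration, combines (i) the pointwise bound on $M_4$, (ii) the improved bilinear Strichartz estimate on $\mathbb{R}$, $\|(P_{N_1}v_1)(P_{N_2}v_2)\|_{L^2_{t,x}}\lesssim \big(\tfrac{\min(N_1,N_2)}{\max(N_1,N_2)}\big)^{1/2}\|v_1\|_{X^{0,b}}\|v_2\|_{X^{0,b}}$ together with its sharper variant when the two frequency supports are separated, and (iii) the local bounds $\|P_M u\|_{X^{0,b}}\lesssim M^{-s}\|u\|_{X^{s,b}}\lesssim C(R)\,M^{-s}N^s\|Du(t_0)\|_{L^2}$ on the high-frequency factors and $\|P_K u\|_{X^{0,b}}\lesssim C(R)$ on the low ones. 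In every configuration the powers $N^s$ from (iii) cancel the $N^{-s}$ built into $d^2$ at high frequency, and the dyadic sums over the four scales converge with room to spare precisely because the $\mathbb{R}$-bilinear estimate carries no $N^{-1}$ loss (unlike the torus). The target is
$$\big|\mathcal{E}(t_0+\delta)-\mathcal{E}(t_0)\big|\ \lesssim\ C(R)\,N^{-3+}\,\sup_{[t_0,t_0+\delta]}\mathcal{E}(t),$$
and this is where I expect the real work to be: matching the mean value gain with the bilinear gain uniformly over all interactions, especially the resonant high~$\times$~high configurations producing a low output frequency, where $M_4$ does \emph{not} gain and one must instead exploit the transversality of the two large frequencies in the bilinear estimate. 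Should the first modified energy fall short in those borderline cases, one adds the standard $4$-linear correction to pass to a second-generation modified energy, comparable to $\mathcal{E}$ up to lower order, whose time derivative is $6$-linear with additional smoothing, and runs the same scheme — exactly as in \cite{SoSt1}. The $\log N$ losses from the dyadic sums (or the $b=\tfrac12+$ threshold) produce the harmless $+$ in the exponent.

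Finally one iterates and optimizes. Summing the increment over $\sim|t|/\delta$ consecutive unit-length intervals gives $\mathcal{E}(t)\le\mathcal{E}(0)+C(R)|t|N^{-3+}\sup_{[0,t]}\mathcal{E}$, so once $C(R)|t|N^{-3+}\le\tfrac12$ one obtains $\sup_{[0,t]}\mathcal{E}\le 2\mathcal{E}(0)\le 2\|D\Phi\|_{L^2}^2$. Choosing $N\simeq\max(1,|t|^{1/3+})$ then yields
$$\|u(t)\|_{H^s}\ \lesssim\ N^s\|Du(t)\|_{L^2}\ \lesssim\ N^s\big(\|\Phi\|_{L^2}+N^{-s}\|\Phi\|_{H^s}\big)\ \lesssim\ (1+|t|)^{\frac{s}{3}+}\|\Phi\|_{H^s},$$
and the same bound for $t<0$ follows from the time-reversal symmetry of $(\ref{eq:Hartree})$. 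The resulting constant depends on $\Phi$ only through $R=C(\|\Phi\|_{H^1})$, hence defines a function $C_s$ continuous on $H^1$, which is the assertion of Theorem \ref{Theorem 2}.
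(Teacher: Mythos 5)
Your architecture — the upside-down $I$-operator with the rescaled symbol $d$, the iteration of a decay estimate over intervals of size $\delta$, the optimization $N\simeq |t|^{1/3+}$, and the comparison $\|u\|_{H^s}\lesssim N^s\|Du\|_{L^2}$ — matches the paper exactly, and your identification of the second-generation modified energy as a possible remedy is the right instinct. The gap is that you treat that remedy as optional, whereas it is mandatory: the estimate $|\mathcal E(t_0+\delta)-\mathcal E(t_0)|\lesssim N^{-3+}\sup\mathcal E$ is simply out of reach for the first modified energy $\mathcal E=\|Du\|_{L^2}^2$, even with the full strength of the $\mathbb R$-bilinear Strichartz. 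Bookkeeping the worst configuration (two comparable high frequencies $\sim N_{(1)}$, two low ones $\sim N_{(3)}$): the mean-value gain on the numerator $d(\xi_1)^2-d(\xi_2)^2+d(\xi_3)^2-d(\xi_4)^2$ yields at most a factor $N_{(3)}/N_{(1)}$, and two high--low bilinear pairings yield at most $N_{(1)}^{-1}$, for a total of $N_{(3)}/N_{(1)}^2\gtrsim N_{(1)}^{-2}$ when $N_{(3)}\gtrsim 1$. That gives $N^{-2+}$, hence $(1+|t|)^{\frac{s}{2}+}$ — the periodic rate, not the claimed $(1+|t|)^{\frac{s}{3}+}$.

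What the paper actually does, and what your proposal is missing, is to pass \emph{unconditionally} to $E^2(u)=E^1(u)+\lambda_4(\Psi;u)$ where the correction multiplier $\Psi$ is the quotient of the $\frac{d}{dt}E^1$ multiplier by the resonance function $\xi_1^2-\xi_2^2+\xi_3^2-\xi_4^2=2(\xi_1+\xi_2)(\xi_1+\xi_4)$. The crucial technical input is the pointwise bound on $\Psi$ (Proposition \ref{Proposition 4.2}), proved via this factorization, the estimates $|\theta'|\lesssim\theta/|\xi|$, $|\theta''|\lesssim\theta/|\xi|^2$, and the Double Mean Value Theorem: $\Psi=O\big(\theta(N_1^*)\theta(N_2^*)/(N_1^*)^2\big)$ in the nonresonant case and $O\big(\theta(N_1^*)\theta(N_2^*)N_3^*N_4^*/(N_1^*)^3\big)$ in the resonant one. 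These are strictly stronger than your mean-value bound on the numerator alone, because the division by the resonance function supplies at least one extra power of $N_1^*$ even in the configurations where the numerator has no cancellation. Only after this does the $6$-linear expression $\lambda_6(M_6;u)$, estimated with the improved bilinear Strichartz (twice, using the rough-cutoff variant, Proposition \ref{chiImprovedStrichartz}) and the $L^\infty_{t,x}$ endpoint, produce the $N^{-3+}$ decay. So commit to $E^2$ from the start, establish the pointwise bounds on $\Psi$, verify $E^2\sim E^1$ (so the modified energy controls the Sobolev norm), and run the iteration on $E^2$ — your closing optimization then goes through unchanged.
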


The bound in Theorem \ref{Theorem 2} is better than the bound $C(1+|t|)^{\frac{1}{2}s+}\|\Phi\|_{H^s}$, which we obtained in the periodic setting in \cite{SoSt1}.

\vspace{2mm}

\begin{remark}
\label{Remark 1.1}
As in \cite{SoSt1}, we can see that the focusing-type analogues of Theorem \ref{Theorem 1} and Theorem \ref{Theorem 2} hold, if we suppose that the initial data is sufficiently small in $L^2$. Namely, if we take $\|\Phi\|_{L^2}$ sufficiently small, Theorem \ref{Theorem 1} holds for the focusing NLS on $\mathbb{R}$. The continuity of the higher conserved quantities is the same \cite{FadTak}. Furthermore, under the same smallness assumption, Theorem \ref{Theorem 2} still holds for $(\ref{eq:Hartree})$ when the convolution potential is not necessarily non-negative, but is still real-valued.
\end{remark}

\subsection{Motivation for the problem and previously known results:}

The growth of Sobolev norms has a physical interpretation in the context of the \emph{Low-to-High frequency cascade}. Namely, we see that $\|u(t)\|_{H^s}$ weighs the higher frequencies more as $s$ becomes larger, and hence its growth gives us a quantitative estimate for how much of the support of $|\widehat{u}|^2$ has transferred from the low to the high frequencies\footnote{We observe that, from conservation of energy, not all of the support of $\widehat{u}$ can move to the high frequencies. If a low-to-high frequency cascade occurs, then a part of $\widehat{u}$ must concentrate near the low frequencies, to counterbalance a movement of $\widehat{u}$ towards the high frequencies. The growth of high Sobolev norms quantitatively describes the latter part of the process.}. This sort of problem also goes under the name \emph{weak turbulence} \cite{BN,BS,Zak}.

\vspace{3mm}

From local well-posedness theory \cite{B3,Ca,Tao}, we know that there exist $C,\tau_0>0$, depending only on the initial data $\Phi$ such that for all $t$:

\begin{equation}
\label{eq:ExponentialIteration}
\|u(t+\tau_0)\|_{H^s}\leq C\|u(t)\|_{H^s}.
\end{equation}
Iterating (\ref{eq:ExponentialIteration}) yields the exponential bound:
\begin{equation}
\label{eq:ExponentialBound}
\|u(t)\|_{H^s}\leq C_1 e^{C_2 t}.
\end{equation}
Here, $C_1,C_2>0$ again depend only on $\Phi$.

\vspace{3mm}

For a wide class of nonlinear dispersive equations, the bound(\ref{eq:ExponentialBound}) can be improved to a polynomial bound, as long as we take $s$ to be an integer, or if we consider sufficiently smooth initial data\footnote{i.e. if we take $\Phi \in H^{\lceil s \rceil}$; This of course only makes sense for the equations which are not completely integrable.}.
This observation was first made in the work of Bourgain \cite{B2}, and continued in the work of Staffilani \cite{S,S2}.

\vspace{3mm}

The crucial step in the mentioned works was to improve the iteration bound (\ref{eq:ExponentialIteration}) to:

\begin{equation}
\label{eq:PolynomialIteration}
\|u(t+\tau_0)\|_{H^s}\leq \|u(t)\|_{H^s} + C \|u(t)\|_{H^s}^{1-r}.
\end{equation}

\vspace{2mm}

As before, $C,\tau_0>0$ depend only on $\Phi$. In this bound, $r \in (0,1)$ satisfies $r \sim \frac{1}{s}$.
One can show that (\ref{eq:PolynomialIteration}) implies that for all $t \in \mathbb{R}$:

\begin{equation}
\label{eq:PolynomialBound}
\|u(t)\|_{H^s}\leq C(\Phi) (1+|t|)^{\frac{1}{r}}.
\end{equation}

\vspace{3mm}

In \cite{B2}, (\ref{eq:PolynomialIteration}) was obtained by using the \emph{Fourier multiplier method}. In \cite{S,S2}, the iteration bound was obtained by using multilinear estimates in $X^{s,b}$-spaces. Similar estimates were used in \cite{KPV3} in the study of well-posedness theory. The key was to use a multilinear estimate in an $X^{s,b}$-space with negative first index $s$. Such a bound was then used as a smoothing estimate. A slightly different approach, based on the analysis of Burq, G\'{e}rard, and Tzvetkov \cite{BGT} is used to obtain (\ref{eq:PolynomialIteration}) in the context of compact Riemannian manifolds in \cite{CatW,Z}.

\vspace{3mm}

An alternative iteration bound, based on the use of the \emph{upside-down I-method}, was used in \cite{SoSt1}, and it gave better polynomial bounds for solutions of nonlinear Schr\"{o}dinger equations on $S^1$. The main idea was to consider the operator $\mathcal{D}$, related to $D^s$ such that $\|\mathcal{D}u\|_{L^2}^2$ is \emph{slowly varying}. A similar technique can be applied to the Hartree equation on $\mathbb{T}^2$ and on $\mathbb{R}^2$. The latter results will be presented in our forthcoming paper \cite{SoSt2}.

\vspace{3mm}

In the paper \cite{B4}, improved polynomial bounds were obtained for the defocusing quintic NLS on $S^1$. The techniques used in this work were based on dynamical systems and Birkhoff normal forms, by which the nonlinearity was reduced to its ``essential part''. The bound given in \cite{B4} for the quintic equation is stronger than the one we obtained in \cite{SoSt1}, but the proof for the stronger bound doesn't seem to work for higher nonlinearities. However, the method given in \cite{SoSt1} works for all nonlinearities.

\vspace{3mm}

All the polynomial bounds mentioned so far involve powers which are essentially a multiple of $s$.
On the other hand, let us consider the linear Schr\"{o}dinger equation on $S^1$ with a real time-dependent potential, i.e.

\begin{equation}
\label{eq:LinearSchrodinger}
iu_t + \Delta u= Vu.
\end{equation}

Here, $V:S^1 \times \mathbb{R} \rightarrow \mathbb{R}$.
If $V$ is taken to be smooth in $x$ and $t$, and we assume that it satisfies the bounds:

\begin{equation}
\label{eq:VBounds}
|\partial_x^{\alpha} \partial_t^{\beta}V|\leq C(\alpha,\beta),\,\,\mbox{for all non-negative integers}\,\,\alpha,\beta.
\end{equation}

Under the assumption (\ref{eq:VBounds}), it is shown in \cite{B5} that, for every $\epsilon>0$, the global solution $u$ of (\ref{eq:LinearSchrodinger}) with initial data $\Phi \in H^s$ satisfies for all $t \in \mathbb{R}$:

\begin{equation}
\label{eq:epsilonBound}
\|u(t)\|_{H^s}\leq C(\Phi,\epsilon)(1+|t|)^{\epsilon}
\end{equation}

The same bound is also proved on $\mathbb{T}^d$, for $d \geq 2$. The proof of the latter result relies on more sophisticated number theoretic arguments.
Furthermore, it was noted in \cite{B6,W} that one obtains an improved logarithmic bound if further regularity assumptions on $V$ are added. Finally, let us note that recently, a new proof of $(\ref{eq:epsilonBound})$ was given in \cite{De}. The argument given in this paper is based on an iterative change of variable. In addition to recovering the result $(\ref{eq:epsilonBound})$ on any $d$-dimensional torus, the same bound is proved for the linear Schr\"{o}dinger equation on any Zoll manifold, i.e. on any compact manifold whose geodesic flow is periodic. It is still an open problem to adapt any of these techniques to obtain improved bounds for nonlinear equations in the periodic case.

\vspace{3mm}

To the best of our knowledge, there are no polynomial bounds not involving powers of $s$ in the non-periodic case except Theorem \ref{Theorem 1}. As we will see, the proof of Theorem \ref{Theorem 1} also works for integer $s$. Hence, the obtained bounds allow us to recover the uniform bounds (\ref{eq:Integrals}) for integer $s$, up to a $t^{0+}$ loss.

\vspace{3mm}

Let us finally mention that the problem of Sobolev norm growth was also studied in a recent paper by Colliander, Keel, Staffilani, Takaoka, and Tao \cite{CKSTT6}, but in the sense of bounding the growth from below. In this paper, the authors exhibit the existence of smooth solutions of the cubic defocusing nonlinear Schr\"{o}dinger equation on $\mathbb{T}^2$, whose $H^s$ norm is arbitrarily small at time zero, and is arbitrarily large at some large finite time. One should note that behavior at infinity is still an open problem.

\subsection{Main ideas of the proofs:}

\subsubsection{Main ideas of the proof of Theorem \ref{Theorem 1}:}

\vspace{1mm}

The main idea of the proof of Theorem \ref{Theorem 1} is to look at the high and low-frequency part of the solution $u$ as in \cite{B5}, and to use the bound (\ref{eq:Integrals}), which gives us uniform bounds on integral Sobolev norms of $u$. In particular, we let $N$ be a parameter, which will be the threshold dividing the ``low'' and ``high'' frequencies, and we define $Q$ to be the projection operator onto the high frequencies. From (\ref{eq:Integrals}), i.e. from the uniform boundedness of the $H^{\lfloor s \rfloor}$ of a solution, we can derive that for all times $t$:

\begin{equation}
\label{eq:(I-Q)u}
\|(I-Q)u(t)\|_{H^s}^2\leq B.
\end{equation}

Here $B=C(\Phi)N^{2\alpha}$, where $\alpha:=s - \rfloor s \lfloor \in [0,1)$ is the fractional part of $s$.
We note that the exponent is then in $[0,2)$ and is not a multiple of $s$.
We use the estimate $(\ref{eq:(I-Q)u})$ to bound the low-frequency part of the solution.

\vspace{3mm}

One then has to bound $\|Qu(t)\|_{H^s}$.
For $t_1>0$, we look at the quantity:

$$\|Qu(t_1)\|_{H^s}^2-\|Qu(t_0)\|_{H^s}^2=\int_{t_0}^{t_1} \frac{d}{dt}\|Qu(t)\|_{H^s}^2 dt.$$

Since we are working on the real line, we can use an appropriate dyadic decomposition and the \emph{improved Strichartz estimate} (Proposition \ref{Proposition 2.3}) to obtain a decay factor of $\frac{1}{N^{1-}}$ in the above integral in time. The exact bounds we obtain are the content of Proposition \ref{Proposition 3.4}. At the end, we deduce that there exists an increment  $\delta>0$, and $C>0$, both depending only on the initial data such that for all $t_0 \in \mathbb{R}$, one has:

\begin{equation}
\label{eq:IncrementIdea}
\|Qu(t_0+\delta)\|_{H^s}^2\leq (1+\frac{C}{N^{1-}})\|Qu(t_0)\|_{H^s}^2 + B_1.
\end{equation}
Here, $B_1\lesssim \frac{1}{N^{1-}}B$.

\vspace{2mm}

The idea now is to iterate (\ref{eq:IncrementIdea}) for times $t_0=0, \delta, \ldots, n\delta,$ where $n \in \mathbb{N}$ is an integer such that $n \lesssim N^{1-}$.

\vspace{2mm}

Multiplying the obtained inequalities by appropriate powers of $1+\frac{C}{N^{1-}}$, and telescoping, we show that:

\begin{equation}
\label{eq:Qundelta}
\|Qu(n\delta)\|_{H^s}^2 \lesssim (1+\frac{C}{N^{1-}})^n \|Qu(0)\|_{H^s}^2 + B
\end{equation}

Since $n \lesssim N^{1-}$, we know:

$$(1+\frac{1}{N^{1-}})^n=O(1).$$

Using the previous bound, $(\ref{eq:(I-Q)u})$ and $(\ref{eq:Qundelta})$, we can show that for all $t \in [0,n \delta]$:

$$\|u(t)\|_{H^s}^2 \lesssim C\|\Phi\|_{H^s}^2 + B.$$

\vspace{2mm}

Optimizing $N$ in terms of the length of time interval $[0,T]$ on which we are considering the solution, and noting that then $B$ becomes the leading term, Theorem \ref{Theorem 1} follows.

\vspace{3mm}

\subsubsection{Main ideas of the proof of Theorem \ref{Theorem 2}:}

\vspace{2mm}

The main argument is similar to the one given in \cite{SoSt1}. Given a parameter $N>1$, we will use the method of an \emph{upside down I-operator}, followed by the method of \emph{higher modified energies} to define a quantity $E^2(u(t))$, which is linked to $\|u(t)\|_{H^s}^2$.

\vspace{2mm}

As in \cite{SoSt1}, our goal is to prove an iteration bound of the type:

    \begin{equation}
    \label{eq:boundforE^2}
    E^2(u(t_0+\delta))\leq (1+\frac{C}{N^{\alpha}})E^2(u(t_0)).
    \end{equation}

for all $t_0 \in \mathbb{R}$, with $\delta,\alpha>0$, and the implied constant all independent of $t_0$.

\vspace{2mm}

Due to the presence of the decay factor $\frac{1}{N^{\alpha}}$, (\ref{eq:boundforE^2}) can be iterated $\sim N^{\alpha}$ times to obtain that $E^2\lesssim 1$ on a time interval of size $\sim N^{\alpha}$.
One then uses the relation between $E^2(u(t))$ and $\|u(t)\|_{H^s}$ to get polynomial bounds for $\|u(t)\|_{H^s}.$

\vspace{2mm}

The bound (\ref{eq:boundforE^2}) is proved in a similar way as the corresponding estimate in \cite{SoSt1}.
In order to construct $E^2$, we need to consider the multiplier $\psi$ which is defined by:
$$\psi:=\frac{(\theta(\xi_1))^2-(\theta(\xi_2))^2+(\theta(\xi_3))^2-(\theta(\xi_4))^2  \widehat{V}(\xi_3+\xi_4)}
{\xi_1^2-\xi_2^2+\xi_3^2-\xi_4^2}$$
when the denominator doesn't vanish, and $\psi:=0$ otherwise. Here, $\theta$ is an appropriately smoothed out and rescaled version of the operator $D^s$.
For details, see $(\ref{eq:theta}),(\ref{eq:definitionofM4})$, and $(\ref{eq:definitionofpsi})$. The key is then to obtain pointwise bounds on such a $\psi$. This is done in Proposition \ref{Proposition 4.2}

\vspace{2mm}

We observe that the bound we obtain in Theorem \ref{Theorem 2} is better than the corresponding bound in the periodic setting. This is a manifestation of stronger dispersion, which is present on the real line. In this paper, we will prove that on $\mathbb{R}$, $(\ref{eq:boundforE^2})$ holds for $\alpha=3-$. We recall from \cite{SoSt1} that the analogous estimate on $S^1$ holds for $\alpha=2-$. Heuristically, the improvement is obtained by using the \emph{improved Strichartz estimate}, which holds on the real line.

\vspace{2mm}

Let us note that Theorem \ref{Theorem 2} would follow trivially if we knew that $(\ref{eq:Hartree})$ scattered in $H^s$, since then all the Sobolev norms of solutions would be uniformly bounded in time. The currently known techniques to prove scattering don't seem to apply in this context though. Namely, the techniques from \cite{GiOz,HNO} require for us to the have additional assumption that our solutions lie in weighted Sobolev spaces, and the obtained bounds depend on these weighted Sobolev norms. Hence we can't argue by density here. The methods from \cite{GiVe} require the initial data to belong to an appropriate subset of the Gevrey class. Finally, the techniques used in \cite{MXZ1,MXZ2} apply only in dimensions greater than or equal to $5$.

\begin{remark}
\label{Remark 1.2}
The techniques of proof of Theorem \ref{Theorem 2} apply to the derivative nonlinear Schr\"{o}dinger equation:

\begin{equation}
\label{eq:dnls}
\begin{cases}
i u_t + \Delta u=i \partial_x (|u|^2u),\\
u(x,0)=\Phi(x),x \in \mathbb{R},\, t\, \in \mathbb{R}.
\end{cases}
\end{equation}

The equation (\ref{eq:dnls}) occurs as a model for the propagation of circularly polarized Alfv\'{e}n
waves in magnetized plasma with a constant magnetic field \cite{SuSu}. In order to obtain global well-posedness in $H^s$, we need to have the smallness assumption:

\begin{equation}
\label{eq:smallnessassumption}
\|\Phi\|_{L^2}<\sqrt{2\pi},
\end{equation}

From \cite{KN}, we know that (\ref{eq:dnls}) is completely integrable. Hence, as for the cubic NLS, it makes sense to bound only the non-integral Sobolev norms of a solution.

\newtheorem*{DNLSbound}{Bound for the Derivative NLS}

\begin{DNLSbound}
For $s>1$, not an integer, and $\Phi \in H^s(\mathbb{R})$, satisfying the smallness assumption
(\ref{eq:smallnessassumption}), there exists $C(s,|\Phi\|_{H^1})$ such that the solution $u$ of
(\ref{eq:dnls}) satisfies:
\begin{equation}
\label{eq:dnlsbound}
\|u(t)\|_{H^s}\leq C(1+|t|)^{2s+} \|\Phi\|_{H^s},\,\mbox{for all}\,t\, \in \mathbb{R}.
\end{equation}
\end{DNLSbound}

The proof of $(\ref{eq:dnlsbound})$ is quite involved. Unlike Theorem \ref{Theorem 1}, we are not able to recover uniform bounds on the integral Sobolev norms of a solution. The techniques that we applied to the cubic NLS don't seem to work for the derivative NLS due to the derivative in the nonlinearity. A sketch of the proof of $(\ref{eq:dnlsbound})$ is given in Appendix C.
\end{remark}

\textbf{Organization of the paper:}

\vspace{2mm}

In Section 2, we give some notation and recall some known facts from Harmonic Analysis. In Section 3, we Prove Theorem \ref{Theorem 1}. Theorem \ref{Theorem 2} is proved in Section 4. Appendix A contains the proofs of auxiliary results for the cubic NLS, whereas Appendix B contains proofs of auxiliary results for the Hartree equation. In Appendix C, we sketch the proof of the bound for the derivative NLS.

\vspace{3mm}

\textbf{Acknowledgements:}

\vspace{2mm}

The author would like to thank his Advisor, Gigliola Staffilani for suggesting this problem, and for her help and encouragement. He would also like to thank Hans Christianson and Antti Knowles for several useful comments and discussions.

\vspace{3mm}

\section{Notation and known facts.}

In our paper, we denote by $A\lesssim B$ an estimate of the form $A\leq CB.$ for some constant $C>0$.
If $C$ depends on $d$, we also write $A \lesssim_d B$ and $C=C(d)$.
\vspace{2mm}
Let us denote by $\|f\|_{L^p}$ the $L^p(\mathbb{R})$ norm, and we denote by $\|f\|_{L^q_tL^r_x}$ the mixed norm:
$$\|f\|_{L^q_tL^r_x}:=\big(\int (\int |f(x,t)|^r dx)^{\frac{q}{r}} dt \big)^{\frac{1}{q}}.$$
with the usual modifications when $q=\infty.$
\vspace{2mm}
We define the spatial Fourier transform of a function $f \in L^2(\mathbb{R})$ by:
$$\widehat{f(\xi)}:=\int_{\mathbb{R}} f(x)e^{-i x \xi} dx.$$
The spacetime Fourier transform of a function $u \in L^2_{t,x}(\mathbb{R}\times \mathbb{R})$ we define by:
$$\widetilde{u}(\xi,\tau):=\int_{\mathbb{R}}\int_{\mathbb{R}} u(x,t) e^{-i (x \xi + t \tau)} dxdt.$$
Let us take the convention for the Japanese bracket to be:
$$\langle \xi \rangle:=\sqrt{1 + |\xi|^2}.$$
Given $s \in \mathbb{R}$, we define the operator $D^s$ by:
$$\widehat{D^s f(\xi)}:=\langle \xi \rangle^s \widehat{f(\xi)}.$$
Furthermore, we define the operator $\dot{D}^s$ by:
$$\widehat{\dot{D}^s f(\xi)}:=|\xi|^s \widehat{f(\xi)}.$$
Also, we define the Sobolev norm of $f=f(x)$:
$$\|f\|_{H^s}:= \|\langle \xi \rangle^s \hat{f}\|_{L^2},$$
and the corresponding Sobolev space:
$$H^{s}(\mathbb{R}):=\{f:\|f\|_{H^s}:=< \infty\}.$$
Let us also define:
$$H^{\infty}(\mathbb{R}):=\bigcap_{s \in \mathbb{R}} H^s(\mathbb{R}).$$
Furthermore, given $s,b \in \mathbb{R}$, we define the $X^{s,b}$ norm of $u=u(x,t)$:
$$\|u\|_{X^{s,b}}:=\|\langle \xi \rangle^s \langle \tau+\xi^2 \rangle^b \widetilde{u}\|_{L^2_{\tau,\xi}},$$
and the corresponding $X^{s,b}$ space
$$X^{s,b}(\mathbb{R}\times \mathbb{R}):=\{u:\|u\|_{X^{s,b}}< \infty\}.$$
We shall usually write the above spaces just as $H^s$ and $X^{s,b}$.
\vspace{3mm}
On $\mathbb{R}$, we recall the following Strichartz estimate (c.f. \cite{B3,Tao}).
\begin{equation}
\label{eq:L6estimate}
\|f\|_{L^6_{t,x}}\lesssim \|f\|_{X^{0,\frac{1}{2}+}}.
\end{equation}
\vspace{3mm}
Interpolating between (\ref{eq:L6estimate}) and $\|f\|_{L^2_{t,x}}=\|f\|_{X^{0,0}}$, it follows that:
\begin{equation}
\label{eq:L4estimate}
\|f\|_{L^4_{t,x}}\lesssim \|f\|_{X^{0,\frac{3}{8}+}}.
\end{equation}
\vspace{3mm}
From Sobolev embedding, we deduce that:
\begin{equation}
\label{eq:LinftyL2estimate}
\|f\|_{L^{\infty}_tL^2_x}\lesssim \|f\|_{X^{0,\frac{1}{2}+}}.
\end{equation}
and:
\begin{equation}
\label{eq:Linftyestimate}
\|f\|_{L^{\infty}_tL^{\infty}_x}\lesssim \|f\|_{X^{\frac{1}{2}+,\frac{1}{2}+}}.
\end{equation}

\vspace{3mm}

Interpolating between (\ref{eq:L6estimate}) and (\ref{eq:LinftyL2estimate}), we obtain:

\begin{equation}
\label{eq:L8L4estimate}
\|f\|_{L^8_tL^4_x}\lesssim \|f\|_{X^{0,\frac{1}{2}+}}.
\end{equation}
\vspace{3mm}
From \cite{SoSt1}, we recall the following localization bound for $X^{s,b}$ spaces.

\begin{lemma}
\label{Lemma 2.2}
If $b \in (0,\frac{1}{2})$ and $s \in \mathbb{R}$, then, for $c<d$:
\begin{equation}
\label{eq:Xsb_localization}
\|\chi_{[c,d]}(t)f\|_{X^{s,b}}\lesssim \|f\|_{X^{s,b+}}
\end{equation}
where the implicit constant doesn't depend on $u,c,d$.
\end{lemma}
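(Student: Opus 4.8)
The plan is to peel off the spatial variable and reduce to a one-dimensional estimate in $t$. Multiplication by the scalar cut-off $\chi_{[c,d]}(t)$ does not interact with the spatial frequency: on the spatial Fourier side it acts, for each fixed $\xi$, as $\widehat{u}(\xi,t)\mapsto\chi_{[c,d]}(t)\,\widehat{u}(\xi,t)$. Likewise, for each fixed $\xi$ the $X^{s,b}$ norm is the weighted norm $\|\langle\tau-\xi^2\rangle^{b}\widetilde{u}(\xi,\tau)\|_{L^2_\tau}$, and the change of variables $\tau\mapsto\tau+\xi^2$ that turns $\langle\tau-\xi^2\rangle$ into $\langle\tau\rangle$ corresponds, in physical time, to multiplying $\widehat{u}(\xi,\cdot)$ by the unimodular scalar $e^{-it\xi^2}$ — an operation that commutes with multiplication by $\chi_{[c,d]}(t)$. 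Setting $g_\xi(t):=e^{-it\xi^2}\widehat{u}(\xi,t)$, this gives the identities
\[
\|\chi_{[c,d]}(t)f\|_{X^{s,b}}^2=\int_{\mathbb{R}}\langle\xi\rangle^{2s}\,\|\chi_{[c,d]}(t)\,g_\xi\|_{H^b_t}^2\,d\xi,\qquad \|f\|_{X^{s,b+}}^2=\int_{\mathbb{R}}\langle\xi\rangle^{2s}\,\|g_\xi\|_{H^{b+}_t}^2\,d\xi .
\]
Hence it suffices to prove the scalar bound $\|\chi_{[c,d]}\,g\|_{H^b(\mathbb{R})}\lesssim\|g\|_{H^{b+}(\mathbb{R})}$ with a constant independent of $c<d$, and then integrate against $\langle\xi\rangle^{2s}\,d\xi$.

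For the one-dimensional estimate I would write $\chi_{[c,d]}=H(\cdot-c)-H(\cdot-d)$ with $H$ the Heaviside function. Since translation in $t$ is an isometry of $H^b(\mathbb{R})$, it is enough to bound multiplication by $H$, hence — modulo the harmless addition of $\tfrac12 g$ — multiplication by $\operatorname{sgn}$. On the Fourier side, multiplication by $\operatorname{sgn}(t)$ is, up to a unimodular constant, the Hilbert transform $\mathcal{H}$ acting in the $\tau$ variable, so $\|\operatorname{sgn}(t)\,g\|_{H^b}=\|\mathcal{H}\widehat{g}\|_{L^2(\langle\tau\rangle^{2b}\,d\tau)}$. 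The weight $\langle\tau\rangle^{2b}$ belongs to the Muckenhoupt class $A_2(\mathbb{R})$ exactly when $|b|<\tfrac12$, so the weighted Calder\'{o}n--Zygmund bound for $\mathcal{H}$ yields $\|\mathcal{H}\widehat{g}\|_{L^2(\langle\tau\rangle^{2b})}\lesssim_b\|\widehat{g}\|_{L^2(\langle\tau\rangle^{2b})}=\|g\|_{H^b}$, with a constant depending only on $b$. Combined with the first paragraph, this proves even $\|\chi_{[c,d]}f\|_{X^{s,b}}\lesssim_b\|f\|_{X^{s,b}}$, which is stronger than the claimed bound; the loss from $b$ to $b+$ in the statement is not needed and is kept only for uniformity with \cite{SoSt1}.

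If one wishes to avoid weighted singular integrals, the scalar estimate can instead be obtained from the Gagliardo--Slobodeckij description of $\dot{H}^b$, $0<b<1$: for $F=\chi_{[c,d]}\,g$ one splits $\iint|F(t)-F(t')|^2|t-t'|^{-1-2b}\,dt\,dt'$ according to whether $t$ and $t'$ lie in $[c,d]$; the contribution of both points inside is at most $\|g\|_{\dot H^b}^2$, that of both outside is $0$, while the mixed region, after integrating out the exterior variable, is controlled by the one-dimensional Hardy inequality $\int_c^d (t-c)^{-2b}|g(t)|^2\,dt\lesssim\|g\|_{\dot H^b}^2$ together with its reflection — both valid precisely for $0<b<\tfrac12$. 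The $L^2$ part is trivial since $|F|\le|g|$ pointwise.

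The only step that genuinely requires care is the reduction in the first paragraph, i.e.\ checking that the $\xi$-dependent twist $e^{-it\xi^2}$ identifies $X^{s,b}$, fibrewise in $\xi$, with $L^2(\langle\xi\rangle^{2s}\,d\xi;H^b_t)$ and truly commutes with the time cut-off. After that, the one-dimensional bound is classical, and the hypothesis $b\in(0,\tfrac12)$ is exactly what makes both proofs of it work — and is sharp, since for $b\ge\tfrac12$ the function $\chi_{[c,d]}g$ need not lie in $H^b$ at all.
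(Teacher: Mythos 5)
Your proof is correct. The fibrewise reduction in $\xi$ — conjugating by $e^{-it\xi^2}$ to identify $\|f\|_{X^{s,b}}^2$ with $\int\langle\xi\rangle^{2s}\|g_\xi\|_{H^b_t}^2\,d\xi$, and noting that the time cut-off commutes with this unimodular conjugation — is sound, and both one-dimensional arguments you give for $\|\chi_{[c,d]}g\|_{H^b(\mathbb{R})}\lesssim_b\|g\|_{H^b(\mathbb{R})}$ are classical and valid exactly on the range $|b|<\tfrac12$: the Hilbert transform is bounded on $L^2(\langle\tau\rangle^{2b}d\tau)$ because $\langle\tau\rangle^{2b}\in A_2$ precisely when $|b|<\tfrac12$, and the Gagliardo--Slobodeckij plus fractional Hardy route likewise requires $0<b<\tfrac12$. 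You also correctly observe that this yields the sharper, $\epsilon$-free estimate $\|\chi_{[c,d]}f\|_{X^{s,b}}\lesssim_b\|f\|_{X^{s,b}}$, of which the stated lemma is a weakening. The paper itself gives no proof here — it defers to Lemma 2.1 of Appendix A of \cite{SoSt1} and points to \cite{CKSTT4,CafE}; the arguments in that circle of references proceed by a genuinely different route, namely decomposing the rough cut-off into a mollified piece plus a remainder (compare the $\chi=a+b$ decomposition $(\ref{eq:chi=a+b})$ used later in this very paper) and estimating the pieces separately, which is where the $b\to b+$ loss naturally enters. Your approach sidesteps that decomposition, makes transparent that $b<\tfrac12$ is exactly the $A_2$ threshold, and delivers the uniform-in-$(c,d)$ constant directly from translation invariance (or from the translation-invariant form of the Hardy inequality), at the cost of invoking weighted Calder\'on--Zygmund theory rather than only elementary convolution estimates.
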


For the proof of Lemma \ref{Lemma 2.2}, we refer the reader to the proof of Lemma 2.1. in Appendix A of \cite{SoSt1}.
We remark that the proof of the Lemma given in the periodic case carries over to the non-periodic case. Let us also note that a similar localization result was also proved in \cite{CKSTT4}, and was stated without proof in \cite{CafE}.

\vspace{3mm}

From \cite{B7,CKSTT}, we recall that on $\mathbb{R}$, the following \emph{improved Strichartz estimate} holds:

\begin{proposition}
\label{Proposition 2.3} (Improved Strichartz Estimate)
Suppose $N_1>0$ and suppose that $f,g \in X^{0,\frac{1}{2}+}(\mathbb{R} \times \mathbb{R})$ are such that for all $t \in \mathbb{R}$:

$$supp\, \hat{f}(t) \subseteq \{|\xi| \sim N_1\},\,supp\, \hat{g}(t) \subseteq \{|\xi|\ll N_1\}.$$
Then, the following bound holds:

$$\|fg\|_{L^2_{t,x}}\lesssim \frac{1}{{N_1}^{\frac{1}{2}}}\|f\|_{X^{0,\frac{1}{2}+}}\|g\|_{0,\frac{1}{2}+}.$$
\end{proposition}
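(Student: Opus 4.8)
The plan is to reduce the bilinear estimate to a one-dimensional transversality computation on the characteristic surface $\tau = \xi^2$. First I would use the standard duality/transfer principle for $X^{s,b}$ spaces: since $b = \frac12 +$, it suffices (after absorbing the $\langle \tau - \xi^2\rangle^{1/2+}$ weights and using Cauchy--Schwarz in the modulation variables) to prove the analogous estimate for free solutions, namely $\|(e^{it\Delta}\phi)(e^{it\Delta}\psi)\|_{L^2_{t,x}} \lesssim N^{-1/2}\|\phi\|_{L^2}\|\psi\|_{L^2}$ whenever $\hat\phi$ is supported in $\{|\xi|\sim N\}$ and $\hat\psi$ in $\{|\eta|\ll N\}$. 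This is the well-known bilinear refinement of the $L^4$ Strichartz estimate on $\mathbb{R}$; I would cite \cite{B7,CKSTT} for it, but since the excerpt asks for a proof sketch I would also indicate how it goes.

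For the free-solution estimate, I would compute the spacetime Fourier transform of the product: $\widetilde{(e^{it\Delta}\phi)(e^{it\Delta}\psi)}(\xi,\tau)$ is, up to constants, $\int \hat\phi(\xi_1)\hat\psi(\xi-\xi_1)\,\delta(\tau - \xi_1^2 - (\xi-\xi_1)^2)\,d\xi_1$. By Plancherel in $(\xi,\tau)$, the square of the $L^2_{t,x}$ norm equals the $L^2_{\xi,\tau}$ norm of this convolution-with-a-measure, and after performing the $\tau$ integration via the delta function one is left with an integral over the curve $\{\tau = \xi_1^2 + (\xi-\xi_1)^2\}$ whose Jacobian factor is $|\partial_{\xi_1}(\xi_1^2 + (\xi - \xi_1)^2)| = 2|2\xi_1 - \xi| = 2|\xi_1 - (\xi - \xi_1)|$. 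Under the support hypotheses $|\xi_1|\sim N$ and $|\xi - \xi_1|\ll N$, this difference is comparable to $N$, so the Jacobian is bounded below by $\sim N$; dividing by it produces exactly the gain of $N^{-1}$ inside the integral, hence $N^{-1/2}$ after taking the square root. A Cauchy--Schwarz in the remaining integration variable, together with Plancherel back on $\hat\phi$ and $\hat\psi$, then yields the claimed bound with $\|\phi\|_{L^2}\|\psi\|_{L^2}$ on the right.

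To pass from free solutions to general $f,g \in X^{0,\frac12+}$, I would write $f$ and $g$ via the Fourier inversion formula in the modulation variable, $f(t) = \int e^{it\lambda} f_\lambda \, d\lambda$ where $\widetilde{f_\lambda}(\xi) = \widetilde f(\xi, \lambda + \xi^2)$, and similarly for $g$; each $f_\lambda$ (resp. $g_\mu$) is then, at fixed $\lambda$, essentially a free evolution of a function whose Fourier support still lies in $\{|\xi|\sim N\}$ (resp. $\{|\xi|\ll N\}$), so the frequency hypotheses are preserved. Applying the free-solution bilinear estimate to each pair, then Minkowski's integral inequality in $\lambda,\mu$, and finally Cauchy--Schwarz against the weights $\langle\lambda\rangle^{1/2+}\langle\mu\rangle^{1/2+}$ (which are integrable after losing the extra $0+$), reassembles the $X^{0,\frac12+}$ norms of $f$ and $g$.

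The main obstacle is getting the transversality lower bound right: one must check that the relevant phase gradient $|2\xi_1 - \xi|$ really is $\gtrsim N$ under the separation hypothesis $|\xi|\sim N$, $|\xi-\xi_1|\ll N$ — this is where the one-dimensional geometry and the ``$\ll$'' (rather than merely ``$\lesssim$'') in the low-frequency support condition are essential, and it is what distinguishes this improved estimate from the plain $L^4_{t,x}$ bound \eqref{eq:L4estimate}. Everything else — the transfer principle, the Jacobian change of variables, the Cauchy--Schwarz steps — is routine, and I would present those briefly and refer to \cite{B7,CKSTT} for the details already in the literature.
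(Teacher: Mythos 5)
Your sketch is correct, and it is essentially the standard proof of the one-dimensional bilinear refinement of Strichartz. The paper itself does not prove Proposition~\ref{Proposition 2.3} at all; it simply recalls the statement from \cite{B7,CKSTT}, so there is no in-paper argument to compare against. The argument you give -- transfer to free solutions, Plancherel on the spacetime Fourier side, elimination of the $\delta$-function on the paraboloid, Cauchy--Schwarz, and the Jacobian lower bound $|2\xi_1 - \xi| = |\xi_1 - (\xi - \xi_1)| \gtrsim N$ coming from the frequency separation -- is exactly the proof in those references. One small slip: near the end you state the separation hypothesis as ``$|\xi|\sim N$, $|\xi - \xi_1|\ll N$''; you mean $|\xi_1| \sim N$ (although of course $|\xi|\sim N$ then also holds, so the conclusion is unaffected). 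It is also worth making explicit, as you implicitly do, that the Cauchy--Schwarz step against the $\delta$-measure bounds $\sup_{\xi,\tau}\int \delta(\tau - \xi_1^2 - (\xi-\xi_1)^2)\,d\xi_1 \lesssim N^{-1}$ before Plancherel is applied to the second factor; that is where the decay actually enters.
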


We observe the following consequence of Proposition \ref{Proposition 2.3}:

\begin{corollary}
\label{Corollary 2.4}
For $f,g$ as in Proposition \ref{Proposition 2.3}, one has:
$$\|fg\|_{L^{2+}_tL^2_x}\lesssim \frac{1}{N_1^{\frac{1}{2}-}}\|f\|_{X^{0,\frac{1}{2}+}}\|g\|_{X^{0,\frac{1}{2}+}}.$$
\end{corollary}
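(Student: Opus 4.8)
The plan is to interpolate between Proposition \ref{Proposition 2.3} and a trivial estimate that sacrifices a tiny amount of the dyadic gain in exchange for the extra integrability in the time variable. First I would record the $L^\infty_t L^2_x$ bound on the product: using the Sobolev embedding $(\ref{eq:LinftyL2estimate})$ together with the algebra-type estimate in $X^{0,\frac12+}$ — or, more directly, using $\|fg\|_{L^\infty_t L^1_x} \le \|f\|_{L^\infty_t L^2_x}\|g\|_{L^\infty_t L^2_x}$ combined with the frequency localization of $f$ and $g$ to a ball of radius $\sim N$, which promotes $L^1_x$ to $L^2_x$ via Bernstein at the cost of a harmless power of $N$ — one gets a bound of the form $\|fg\|_{L^\infty_t L^2_x} \lesssim N^{C}\|f\|_{X^{0,\frac12+}}\|g\|_{X^{0,\frac12+}}$ for some fixed finite $C$. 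Any crude bound of this shape suffices; the point is only that it is finite and polynomial in $N$.

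Next I would interpolate this $L^\infty_t L^2_x$ estimate with the $L^2_t L^2_x$ estimate of Proposition \ref{Proposition 2.3}, which carries the decisive factor $N^{-1/2}$. Interpolating the mixed Lebesgue norms $L^2_t L^2_x$ and $L^\infty_t L^2_x$ with a weight $1-\vartheta$ on the first and $\vartheta$ on the second produces $L^{q}_t L^2_x$ with $\tfrac1q = \tfrac{1-\vartheta}{2}$, i.e. $q = \tfrac{2}{1-\vartheta}$; taking $\vartheta$ small gives $q = 2+$. The resulting power of $N$ is $(-\tfrac12)(1-\vartheta) + C\vartheta$, which for $\vartheta$ sufficiently small is $(-\tfrac12)+$, and the multilinear factors $\|f\|_{X^{0,\frac12+}}$, $\|g\|_{X^{0,\frac12+}}$ are reproduced (with the $\tfrac12+$ possibly degrading slightly, which is absorbed into the $+$ notation). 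This is exactly the claimed inequality.

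One technical point to be careful about: interpolation of the bilinear map $(f,g)\mapsto fg$ between two estimates is legitimate here because the map is bilinear and bounded on the relevant $X^{0,\frac12+}$ spaces into each target mixed-norm space, so one may apply complex (or real) interpolation in the output space with the inputs held fixed; alternatively one avoids abstract interpolation entirely by the elementary pointwise argument $\|h\|_{L^{q}_t L^2_x} \le \|h\|_{L^2_t L^2_x}^{1-\vartheta}\|h\|_{L^\infty_t L^2_x}^{\vartheta}$ applied to $h = fg$, which is just Hölder in $t$ on the scalar function $t \mapsto \|f(t)g(t)\|_{L^2_x}$. The latter route is cleanest and sidesteps any subtlety about which interpolation functor to use.

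The main obstacle — such as it is — is simply making sure the auxiliary $L^\infty_t L^2_x$ bound on $fg$ is genuinely available from the hypotheses $f,g \in X^{0,\frac12+}$ with the stated frequency supports; once that is in hand the corollary is a one-line Hölder-in-time interpolation against Proposition \ref{Proposition 2.3}, and the loss of $N^{0+}$ and of $0+$ in the $X^{s,b}$ indices is precisely what the $+$ notation is designed to absorb.
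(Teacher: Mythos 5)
Your argument is correct and is structurally the same as the paper's: both proofs interpolate the $L^2_{t,x}$ bound of Proposition~\ref{Proposition 2.3} (which carries the $N^{-1/2}$) against a second estimate with higher time integrability and no $N$-gain, using the elementary H\"older-in-time inequality $\|h\|_{L^q_t L^2_x}\le \|h\|_{L^2_tL^2_x}^{1-\vartheta}\|h\|_{L^\infty_t L^2_x}^{\vartheta}$ (or its $L^4_t$ analogue). The only real difference is the choice of the second endpoint. The paper takes $L^4_tL^2_x$, obtained by H\"older $\|fg\|_{L^4_tL^2_x}\le\|f\|_{L^8_tL^4_x}\|g\|_{L^8_tL^4_x}$ together with the Strichartz-type bound~(\ref{eq:L8L4estimate}); this endpoint has no $N$-loss at all. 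You take $L^\infty_tL^2_x$, obtained by Cauchy--Schwarz in $x$ plus Bernstein on the product (whose spatial Fourier support lies in a ball of radius $\lesssim N$), which costs a fixed power $N^{1/2}$; since the interpolation weight on this endpoint is $\vartheta=0+$, this loss is absorbed into the $N^{(-1/2)+}$, so the conclusion is identical. One small caution about your first suggested route: $X^{0,\frac12+}$ is \emph{not} an algebra, so the phrase ``algebra-type estimate in $X^{0,\frac12+}$'' should be dropped in favor of the Bernstein argument you spell out, which is the one that actually closes. With that fix, your proof is a correct and mildly more self-contained variant of the paper's (it avoids invoking the $L^8_tL^4_x$ Strichartz estimate), at the harmless cost of a $N^{1/2}$-lossy endpoint; the paper's choice is slightly cleaner but not essentially different.
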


Let us prove Corollary \ref{Corollary 2.4}.

\begin{proof}
Let $f,g$ be as in the assumptions of the Lemma. We observe that by H\"{o}lder's inequality:

$$\|fg\|_{L^4_tL^2_x} \leq \|f\|_{L^8_tL^4_x} \|g\|_{L^8_tL^4_x} \lesssim
\|f\|_{X^{0,\frac{1}{2}+}}\|g\|_{X^{0,\frac{1}{2}+}}.$$
The last inequality follows from (\ref{eq:L8L4estimate}).

Given $\epsilon>0$ small, we take:

$$\theta:=\frac{2-\epsilon}{2+\epsilon}=1-.$$
Then $\theta \in [0,1]$ satisfies:

$$\theta \cdot \frac{1}{2} + (1-\theta) \cdot \frac{1}{4}=\frac{1}{2+\epsilon}.$$
By using interpolation and Proposition \ref{Proposition 2.3}, we deduce that:

$$\|fg\|_{L^{2+\epsilon}_tL^2_x}\leq (\|fg\|_{L^2_{t,x}})^{\theta} (\|fg\|_{L^4_tL^2_x})^{1-\theta}\lesssim $$

$$({N}_1^{-\frac{1}{2}}\|f\|_{X^{0,\frac{1}{2}+}}\|g\|_{X^{0,\frac{1}{2}+}})^{\theta} (\|f\|_{X^{0,\frac{1}{2}+}}\|g\|_{X^{0,\frac{1}{2}+}})^{1-\theta} \lesssim
{N}_1^{-\frac{\theta}{2}}\|f\|_{X^{0,\frac{1}{2}+}}\|g\|_{X^{0,\frac{1}{2}+}}.$$
Since $\theta=1-$, Corollary \ref{Corollary 2.4} follows.
\end{proof}

\vspace{2mm}

In our analysis, we will have to work with $\chi=\chi_{[t_0,t_0+\delta]}(t)$, the characteristic function of the time interval $[t_0,t_0+\delta]$. It is difficult to deal with $\chi$ directly, since this function is not smooth, and since its Fourier transform doesn't have a sign. Instead, we will decompose $\chi$ as a sum of two functions which are easier to deal with. This goal will be achieved by using an appropriate approximation to the identity.
We will use the following decomposition, which is originally found in \cite{CKSTT}:

\vspace{3mm}

Given $\phi \in C^{\infty}_0(\mathbb{R})$, such that: $0 \leq \phi \leq 1,\, \int_{\mathbb{R}} \,\phi(t)\, dt =1\,$, and $\lambda>0$, we recall that the \emph{rescaling} $\phi_{\lambda}$ of $\phi$ is defined by:

$$\phi_{\lambda}(t):=\frac{1}{\lambda}\,\phi(\frac{t}{\lambda}).$$
\vspace{2mm}
We observe that such a rescaling preserves the $L^1$ norm:

$$\|\phi_{\lambda}\|_{L^1_t}=\|\phi\|_{L^1_t}.$$
\vspace{3mm}
Having defined the rescaling, we write, for the scale $N_1>1$:

\begin{equation}
\label{eq:chi=a+b}
\chi(t)=a(t)+b(t),\,\, \mbox{for}\,\, a:=\chi * \phi_{N_1^{-1}}.
\end{equation}
In Lemma 8.2. of \cite{CKSTT}, the authors note the following estimate:

\begin{equation}
\label{eq:abound}
\|a(t)f\|_{X^{0,\frac{1}{2}+}}\lesssim {N_1}^{0+} \|f\|_{X^{0,\frac{1}{2}+}}.
\end{equation}
(The implied constant here is independent of $N_1$.)

\vspace{2mm}

On the other hand, for any $M \in (1,+\infty)$, one obtains:

$$\|b\|_{L^M_t}=\|\chi-\chi * \phi_{{N_1}^{-1}}\|_{L^M_t} \leq
\|\chi\|_{L^M_t}+\|\chi*\phi_{{N_1}^{-1}}\|_{L^M_t}$$
which is by Young's inequality:
$$\leq \|\chi\|_{L^M_t}+\|\chi\|_{L^M_t}\|\phi_{{N_1}^{-1}}\|_{L^1_t}=2\|\chi\|_{L^M_t}=C(M,\chi)=C(M,\Phi)$$
To explain the fact that $C(M,\chi)=C(M,\Phi)$, we note that $\chi$ is defined as the characteristic function of an interval of size $\delta$, and $\delta$, in turn, depends only on $\Phi$.

\vspace{2mm}

If we now define:

\begin{equation}
\label{eq:b1}
b_1(t):= \int_{\mathbb{R}}|\hat{b}(\tau)|e^{i t \tau} d \tau.
\end{equation}
Since $M \in (1,\infty)$, we know by the Littlewood-Paley inequality \cite{D} that:
$$\|b_1\|_{L^M_t} \lesssim \|b\|_{L^M_t}$$
Then, the previous bound on $\|b\|_{L^M_t}$ implies:

\begin{equation}
\label{eq:b1bound}
\|b_1\|_{L^M_t} \leq C(M,\Phi).
\end{equation}

\vspace{2mm}

We will frequently use the following modification of Proposition \ref{Proposition 2.3}

\begin{proposition}
\label{chiImprovedStrichartz}(Improved Strichartz Estimate with rough cut-off in time)
Suppose $N_1>0$ and suppose $f,g \in X^{0,\frac{1}{2}+}(\mathbb{R} \times \mathbb{R})$ are such that for all $t \in \mathbb{R}$:
$$supp\, \hat{f}(t) \subseteq \{|\xi| \sim N_1\},\,supp\, \hat{g}(t) \subseteq \{|\xi|\ll N_1\}.$$
Let $f_1,g_1$ be given by:
$$\widetilde{f_1}:=|(\chi f)\,\widetilde{}\,|, \widetilde{g_1}:=|\widetilde{g}\,|.$$
Then one has:
\begin{equation}
\label{eq:ImprovedStrichartzchi}
\|f_1 g_1\|_{L^2_{t,x}} \lesssim \frac{1}{N_1^{\frac{1}{2}-}} \|f\|_{X^{0,\frac{1}{2}+}} 
\|g\|_{X^{0,\frac{1}{2}+}}
\end{equation}
The same bound holds if:
$$\widetilde{f_1}:=|\widetilde{f}\,|,\,\widetilde{g_1}:=|(\chi g)\,\widetilde{}\,|.$$
\end{proposition}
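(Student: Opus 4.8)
\emph{Proof proposal.} The plan is to prove $(\ref{eq:ImprovedStrichartzchi})$ by imitating the treatment of rough temporal cut-offs in \cite{CKSTT} and \cite{SoSt1}: decompose $\chi = a+b$ into its (essentially) low- and high-modulation parts as in $(\ref{eq:chi=a+b})$, bound the contribution of the smooth piece $a$ by the improved Strichartz estimate of Proposition \ref{Proposition 2.3} together with the smoothing bound $(\ref{eq:abound})$, and bound the contribution of the rough piece $b$ by H\"{o}lder's inequality in time together with Corollary \ref{Corollary 2.4} and the bound $(\ref{eq:b1bound})$.

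First I would reduce to estimating the two pieces separately. Since $\widetilde{u_1}\geq 0$, $\widetilde{v_1}\geq 0$, and the space-time Fourier transform of $u_1 v_1$ is a constant multiple of $\widetilde{u_1}*\widetilde{v_1}$, Plancherel's theorem and the pointwise inequality $|\widetilde{\chi u}|\leq |\widetilde{au}|+|\widetilde{bu}|$ give, upon setting $\widetilde{u_1^a}:=|\widetilde{au}|$ and $\widetilde{u_1^b}:=|\widetilde{bu}|$,
$$\|u_1 v_1\|_{L^2_{t,x}}\leq \|u_1^a v_1\|_{L^2_{t,x}}+\|u_1^b v_1\|_{L^2_{t,x}}.$$
Because multiplication by the functions $a$, $b$, $b_1$ of $t$ alone does not move spatial frequency supports, the functions $u_1^a$, $u_1^b$ and the auxiliary function $u^*$ defined by $\widetilde{u^*}:=|\widetilde{u}|$ all inherit the high-frequency support hypothesis on $u$, while $v_1$ inherits the low-frequency support hypothesis on $v$; moreover all the relevant $X^{0,\frac{1}{2}+}$ norms depend only on moduli of space-time Fourier transforms, so that $\|u_1^a\|_{X^{0,\frac{1}{2}+}}=\|au\|_{X^{0,\frac{1}{2}+}}$, $\|u^*\|_{X^{0,\frac{1}{2}+}}=\|u\|_{X^{0,\frac{1}{2}+}}$ and $\|v_1\|_{X^{0,\frac{1}{2}+}}=\|v\|_{X^{0,\frac{1}{2}+}}$.

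For the $a$-piece I would apply Proposition \ref{Proposition 2.3} to $u_1^a, v_1$ (legitimate elements of $X^{0,\frac{1}{2}+}$ satisfying its hypotheses with the relevant frequency scale equal to $N_1$), followed by $(\ref{eq:abound})$:
$$\|u_1^a v_1\|_{L^2_{t,x}}\lesssim N_1^{-\frac{1}{2}}\|au\|_{X^{0,\frac{1}{2}+}}\|v\|_{X^{0,\frac{1}{2}+}}\lesssim N_1^{-\frac{1}{2}}N^{0+}\|u\|_{X^{0,\frac{1}{2}+}}\|v\|_{X^{0,\frac{1}{2}+}}\lesssim \frac{1}{N_1^{\frac{1}{2}-}}\|u\|_{X^{0,\frac{1}{2}+}}\|v\|_{X^{0,\frac{1}{2}+}},$$
the last step using $N\lesssim N_1$ to absorb $N^{0+}$ into $N_1^{0+}$. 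For the $b$-piece, from $|\widetilde{bu}|\leq |\widehat{b}|*_\tau|\widetilde{u}|=\widetilde{b_1 u^*}$ (with $b_1$ as in $(\ref{eq:b1})$) one gets $\|u_1^b v_1\|_{L^2_{t,x}}\leq \|b_1 u^* v_1\|_{L^2_{t,x}}$; H\"{o}lder's inequality in $t$ with the exponent $M$ of $(\ref{eq:b1bound})$ taken so large that its conjugate exponent equals $2+$, followed by Corollary \ref{Corollary 2.4} applied to $u^*,v_1$ and then $(\ref{eq:b1bound})$, gives
$$\|b_1 u^* v_1\|_{L^2_{t,x}}\leq \|b_1\|_{L^M_t}\|u^* v_1\|_{L^{2+}_t L^2_x}\lesssim \frac{1}{N_1^{\frac{1}{2}-}}\|u^*\|_{X^{0,\frac{1}{2}+}}\|v_1\|_{X^{0,\frac{1}{2}+}}=\frac{1}{N_1^{\frac{1}{2}-}}\|u\|_{X^{0,\frac{1}{2}+}}\|v\|_{X^{0,\frac{1}{2}+}}.$$
Summing the two contributions yields $(\ref{eq:ImprovedStrichartzchi})$; the variant in which the cut-off sits on the second factor follows from the same argument with the roles of $u$ and $v$ interchanged, using that $(\ref{eq:abound})$ applies to $av$ irrespective of the frequency localization of $v$.

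The step I expect to require the most care is the bookkeeping in the reduction above: once absolute values of space-time Fourier transforms have been inserted, one must verify that every appeal to Proposition \ref{Proposition 2.3}, Corollary \ref{Corollary 2.4}, $(\ref{eq:abound})$, and $(\ref{eq:b1bound})$ remains legitimate — this is the reason for passing through the auxiliary functions $u_1^a$, $u_1^b$, $u^*$ and $b_1$ — and that the resulting gain can be written as a negative power of $N_1$ rather than merely of $N$, which is precisely where the hypothesis $N_1\gtrsim N$ enters.
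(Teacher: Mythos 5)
Your proof is correct and follows essentially the same route as the paper: decompose $\chi=a+b$ as in $(\ref{eq:chi=a+b})$, bound the $a$-contribution via Proposition \ref{Proposition 2.3} combined with $(\ref{eq:abound})$ and $N_1\gtrsim N$, and bound the $b$-contribution via $b_1$, an $L^M_t$--$L^{2+}_tL^2_x$ H\"{o}lder split, Corollary \ref{Corollary 2.4}, and $(\ref{eq:b1bound})$. The only difference is cosmetic: you handle the $b$-piece through the pointwise Fourier comparison $|\widetilde{bu}|\leq\widetilde{b_1u^*}$ and the resulting monotonicity $\|u_1^b v_1\|_{L^2_{t,x}}\leq\|b_1u^*v_1\|_{L^2_{t,x}}$, while the paper reaches the same bound through an explicit duality pairing against $c\in L^2_{\tau,\xi}$; these are the same mechanism and your version is arguably tidier. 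One small imprecision: you describe the H\"{o}lder exponent relation as ``conjugate,'' but what is actually needed is $\frac{1}{M}+\frac{1}{2+}=\frac{1}{2}$ (so that $L^M_t\cdot L^{2+}_tL^2_x\hookrightarrow L^2_{t,x}$), which still forces $M$ large and is covered by $(\ref{eq:b1bound})$.
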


\begin{proof}
Let's consider the case when $\widetilde{f_1}=|(\chi f)\,\widetilde{}\,|, \widetilde{g_1}=|\widetilde{g}\,|.$
With notation as earlier, let $F_1,F_2$ be given by:

$$\widetilde{F_1}:=|(a f)\,\widetilde{}\,|, \widetilde{F_2}:=|(b f)\,\widetilde{}\,|.$$
Then, by the triangle inequality, one has:
$$\widetilde{f_1} \leq \widetilde{F_1} + \widetilde{F_2}.$$
Since $\widetilde{f_1},\widetilde{g_1} \geq 0$, Plancherel's Theorem and duality imply that:
$$\|f_1g_1\|_{L^2_{t,x}} \sim \sup_{\|c\|_{L^2_{\tau,\xi}=1}}
\int_{\tau_1+\tau_2+\tau_3=0} \int_{\xi_1+\xi_2+\xi_3=0} \widetilde{f_1}(\xi_1,\tau_1)
\widetilde{g_1}(\xi_2,\tau_2) |c(\xi_3,\tau_3)| d\xi_j d\tau_j$$
$$\leq \sup_{\|c\|_{L^2_{\tau,\xi}=1}}
\int_{\tau_1+\tau_2+\tau_3=0} \int_{\xi_1+\xi_2+\xi_3=0} \widetilde{F_1}(\xi_1,\tau_1)
\widetilde{g_1}(\xi_2,\tau_2) |c(\xi_3,\tau_3)| d\xi_j d\tau_j\,+$$
$$\sup_{\|c\|_{L^2_{\tau,\xi}=1}}
\int_{\tau_1+\tau_2+\tau_3=0} \int_{\xi_1+\xi_2+\xi_3=0} \widetilde{F_2}(\xi_1,\tau_1)
\widetilde{g_1}(\xi_2,\tau_2) |c(\xi_3,\tau_3)| d\xi_j d\tau_j$$
Since $\widetilde{F_1},\widetilde{F_2},\widetilde{v_1} \geq 0$, it follows that the latter expression is
$\sim \|F_1 g_1\|_{L^2_{t,x}} + \|F_2 g_1\|_{L^2_{t,x}}$.
Hence, it follows that:
$$\|f_1 g_1\|_{L^2_{t,x}} \lesssim \|F_1 g_1\|_{L^2_{t,x}} + \|F_2 g_1\|_{L^2_{t,x}}$$

By Proposition \ref{Proposition 2.3}, by the frequency assumptions on $F_1$ and $v_1$, and by the fact that taking absolute values in the spacetime Fourier transform doesn't change the $X^{s,b}$ norms, we know that:

$$\|F_1 g_1\|_{L^2_{t,x}} \lesssim \frac{1}{N_1^{\frac{1}{2}}} \|af\|_{X^{0,\frac{1}{2}+}} \|g\|_{X^{0,\frac{1}{2}+}}$$
We now use $(\ref{eq:abound})$ to deduce that this expression is:
$$\lesssim \frac{1}{N_1^{\frac{1}{2}}}(N^{0+}\|f\|_{X^{0,\frac{1}{2}+}})\|g\|_{X^{0,\frac{1}{2}+}}$$
This expression is:

\begin{equation}
\label{eq:term1chiImprovedStrichartz}
\lesssim \frac{1}{N_1^{\frac{1}{2}-}}\|f\|_{X^{0,\frac{1}{2}+}}\|g\|_{X^{0,\frac{1}{2}+}}
\end{equation}

On the other hand, let us consider $c \in L^2_{\tau,\xi}$.
With notation as before, one has:

$$\big| \int_{\tau_1+\tau_2=0} \int_{\xi_1+\xi_2=0} (F_2 g_1)\,\widetilde{}\,(\xi_1,\tau_1) c(\xi_2,\tau_2) d\xi_j d\tau_j  \big|$$
$$= \big| \int_{\tau_1+\tau_2+\tau_3=0} \int_{\xi_1+\xi_2+\xi_3=0} |(b f)\,\widetilde{}\,(\xi_1,\tau_1)|\widetilde{g_1}(\xi_2,\tau_2) c(\xi_3,\tau_3) d\xi_j d\tau_j \big|$$

$$\leq \int_{\tau_0+\tau_1+\tau_2+\tau_3=0} \int_{\xi_1+\xi_2+\xi_3=0} |\widehat{b}(\tau_0)||\widetilde{f}(\xi_1,\tau_1)| |\widetilde{g_1}(\xi_2,\tau_2)||c(\xi_3,\tau_3)| d\xi_j d\tau_j:=I$$

We then define the functions $G_j,j=1,\ldots,3$ by:

$$\widetilde{G_1}:=|\widetilde{f}|, \widetilde{G_2}:=|\widetilde{g_1}|, \widetilde{G_3}:=|c|$$
Recalling $(\ref{eq:b1})$, and using Parseval's identity, it follows that:

$$I \lesssim \int_{\mathbb{R} \times \mathbb{R}} b_1(t) G_1(x,t) G_2(x,t) G_3(x,t) dx dt$$
We choose $M \in (1,\infty)$, and $2+$ such that: $\frac{1}{M}+\frac{1}{2+}=\frac{1}{2}$. By an $L^M_t, L^{2+}_tL^2_x, L^2_{t,x}$ H\"{o}lder inequality, we deduce that:

$$I \lesssim \|b_1\|_{L^M_t} \|G_1 G_2\|_{L^{2+}_tL^2_x} \|G_3\|_{L^2_{t,x}}$$
We use $(\ref{eq:b1bound})$, Corollary \ref{Corollary 2.4}, and Plancherel's theorem to deduce that:

$$I \lesssim \frac{1}{N_1^{\frac{1}{2}-}} \|f\|_{X^{0,\frac{1}{2}+}} \|g\|_{X^{0,\frac{1}{2}+}}\|c\|_{L^2_{\tau,\xi}}.$$
By duality and by Plancherel's theorem, it follows that:

\begin{equation}
\label{eq:term2chiImprovedStrichartz}
\|F_2 v_1\|_{L^2_{t,x}} \lesssim
\frac{1}{N_1^{\frac{1}{2}-}}\|f\|_{X^{0,\frac{1}{2}+}}\|g\|_{X^{0,\frac{1}{2}+}}
\end{equation}
The case when $\widetilde{f_1}:=|\widetilde{f}\,|,\,\widetilde{g_1}:=|(\chi g)\,\widetilde{}\,|$ is treated analogously.
The Proposition now follows from $(\ref{eq:term1chiImprovedStrichartz})$ and $(\ref{eq:term2chiImprovedStrichartz})$.

\end{proof}

\vspace{3mm}

Furthermore, given a function $v \in L^2_{t,x}$, and a dyadic integer $N$, we define the function $v_N$ as the function obtained from $v$ by restricting its spacetime Fourier Transform to the region $|\xi| \sim N$. We refer to this procedure as a \emph{dyadic decomposition} or \emph{Littlewood-Paley decomposition}. In particular, we can write each function as a sum of such dyadically localized components:

$$v \sim \sum_{dyadic\,\,N} v_N.$$

\vspace{3mm}

Let us give some useful notation for multilinear expressions, which can also be found in \cite{CKSTT,CKSTT5}. For $n \geq 2$, an even integer, we define the hyperplane:

$$\Gamma_n:=\{(\xi_1,\ldots,\xi_n)\in \mathbb{R}^n: \xi_1+\cdots+ \xi_n=0\},$$
endowed with the measure $\delta(\xi_1+\cdots + \xi_n)$. Given a function $M_n=M_n(\xi_1,\ldots,\xi_n)$ on $\Gamma_n$, i.e. an
\emph{n-multiplier}, one defines the \emph{n-linear functional} $\lambda_n(M_n;f_1,\ldots,f_n)$ by:

$$\lambda_n(M_n;f_1,\ldots,f_n):=\int_{\Gamma_n}M_n(\xi_1,\ldots,\xi_n)\prod_{j=1}^n \widehat{f_j}(\xi_j).$$

As in \cite{CKSTT}, we adopt the notation:

\begin{equation}
\label{eq:lambdan}
\lambda_n(M_n;f):=\lambda_n(M_n;f,\bar{f},\ldots,f,\bar{f}).
\end{equation}

We will also sometimes write $\xi_{ij}$ for $\xi_i+\xi_j$,$\xi_{i-j}$ for $\xi_i-\xi_j$, etc.

\vspace{3mm}

Finally, let us recall the following Calculus fact, which is often referred to as the \emph{Double Mean Value Theorem}:

\begin{proposition}
\label{Proposition 2.5}
Let $f \in C^2(\mathbb{R})$. Suppose that $x, \eta, \mu \in \mathbb{R}$ are such that $|\eta|, |\mu| \ll |x|$.
Then, one has:
\begin{equation}
\label{eq:DoubleMVT}
|f(x+\eta+\mu)-f(x+\eta)-f(x+\mu)+f(x)|\lesssim |\eta||\mu||f''(x)|.
\end{equation}
\end{proposition}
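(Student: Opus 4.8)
The plan is to express the fourth-order difference on the left-hand side as a double integral of $f''$ and then estimate crudely. First I would introduce the function
$$g(t) := f(x+t+\mu) - f(x+t),$$
so that the quantity we must bound is exactly $g(\eta) - g(0)$. By the fundamental theorem of calculus, $g(\eta) - g(0) = \int_0^{\eta} g'(t)\,dt$, and $g'(t) = f'(x+t+\mu) - f'(x+t)$; applying the fundamental theorem of calculus again in the $\mu$-variable gives $g'(t) = \int_0^{\mu} f''(x+t+\sigma)\,d\sigma$. Hence
$$f(x+\eta+\mu) - f(x+\eta) - f(x+\mu) + f(x) = \int_0^{\eta}\!\!\int_0^{\mu} f''(x+t+\sigma)\,d\sigma\,dt,$$
with the obvious sign conventions when $\eta$ or $\mu$ is negative.

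Next I would estimate the integrand. Since $|\eta|, |\mu| \ll |x|$, for all $t$ between $0$ and $\eta$ and all $\sigma$ between $0$ and $\mu$ we have $|t + \sigma| \ll |x|$, so $x + t + \sigma$ stays in a fixed neighborhood of $x$ (say, in the interval of radius $|x|/2$ around $x$, once $\ll$ is made precise). On that neighborhood $|f''(x+t+\sigma)| \lesssim \sup_{|y-x|\le |x|/2} |f''(y)|$. The cleanest formulation, and the one matching the statement, is to absorb this sup into the implied constant, or — if one wants literally $|f''(x)|$ on the right — to additionally invoke continuity of $f''$ so that $|f''(y)| \le 2|f''(x)|$ on a sufficiently small such neighborhood (shrinking the meaning of $\ll$ if necessary). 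Taking absolute values through the double integral then yields
$$\left| \int_0^{\eta}\!\!\int_0^{\mu} f''(x+t+\sigma)\,d\sigma\,dt \right| \le |\eta|\,|\mu|\,\sup |f''| \lesssim |\eta|\,|\mu|\,|f''(x)|,$$
which is the claimed bound $(\ref{eq:DoubleMVT})$.

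The argument is essentially routine; the only genuine subtlety — really a matter of interpreting the hypothesis $|\eta|,|\mu| \ll |x|$ — is the passage from $\sup_{|y-x| \lesssim |x|} |f''(y)|$ to $|f''(x)|$ on the right-hand side. This is harmless in the applications in this paper, since there $f$ will be (a smoothed, rescaled power-type function and) $f''$ will be essentially monotone or slowly varying on the relevant scale, so that $|f''(y)| \sim |f''(x)|$ throughout the region of integration; one could alternatively state the Proposition with $\sup_{|y-x|\le|x|/2}|f''(y)|$ in place of $|f''(x)|$ and avoid the issue entirely. I would remark on this rather than belabor it.
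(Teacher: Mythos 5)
Your proof is correct and matches the approach the paper indicates (the paper merely remarks that the result ``follows from the standard Mean Value Theorem'' without giving details): your iterated application of the fundamental theorem of calculus is equivalent to the iterated Mean Value Theorem argument, producing the representation of the fourth-order difference as a double integral (respectively, as $\eta\mu f''$ evaluated at an intermediate point). You are also right to flag the passage from $\sup_{|y-x|\lesssim |x|}|f''(y)|$ to $|f''(x)|$; in the paper's application $f=\theta^2$ with $\theta$ the smoothed power weight of $(\ref{eq:theta})$, for which $(\ref{eq:thetadoubleprime})$ ensures $|f''|$ is comparable at $x$ and at nearby points, so the implicit constant is indeed uniform there.
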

The proof of Proposition \ref{Proposition 2.5} follows from the standard Mean Value Theorem.

\section{The cubic nonlinear Schr\"{o}dinger equation.}

\subsection{Basic facts about the equation:}

The equation (\ref{eq:cubicnls}) has the following conserved quantities:

\begin{equation}
\label{eq:ConservationofMass}
M(u(t)):= \int_{\mathbb{R}} |u(x,t)|^2 dx\,\,\emph{(Mass)}
\end{equation}

\begin{equation}
\label{eq:ConservationofEnergy}
E(u(t)):= \frac{1}{2} \int_{\mathbb{R}} |\nabla u(x,t)|^2 dx + \frac{1}{4}\int_{\mathbb{R}} |u(x,t)|^4 dx\,\,\emph{(Energy)}
\end{equation}

\vspace{3mm}

We observe that $\|u(t)\|_{H^1}$ can be bounded by a continuous function of energy and mass. Energy and mass are in turn continuous on $H^1$ by Sobolev embedding.

\vspace{2mm}

The following local-in-time bound will be useful:

\begin{proposition}
\label{Proposition 3.1}
Suppose that $u$ is a global solution of (\ref{eq:cubicnls}). Then, there exist $\delta=\delta(s,Energy,Mass),C=C(s,Energy,Mass)$ such that, for all $t_0 \in \mathbb{R}$, there exists $v \in X^{s,\frac{1}{2}+}$ satisfying the following properties:

\begin{equation}
\label{eq:Property of v1}
v|_{[t_0,t_0+ \delta]}= u|_{[t_0,t_0+\delta]},
\end{equation}

\begin{equation}
\label{eq:Property of v2}
\|v\|_{X^{s,\frac{1}{2}+}}\leq C \|u(t_0)\|_{H^s},
\end{equation}

\begin{equation}
\label{eq:Property of v3}
\|v\|_{X^{1,\frac{1}{2}+}} \leq C.
\end{equation}
Furthermore, $\delta$ and $C$ can be chosen to depend continuously on energy and mass.

\end{proposition}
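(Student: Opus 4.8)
The plan is to establish Proposition~\ref{Proposition 3.1} as a standard consequence of the local well-posedness theory for \eqref{eq:cubicnls} in $X^{s,b}$ spaces, combined with the a priori bound \eqref{eq:Integrals} on the $H^1$ norm coming from conservation of mass and energy. First I would recall the local well-posedness machinery: there exist $b=\frac12+$ and a time $\delta_0>0$, depending only on $\|u(t_0)\|_{H^1}$, such that on the interval $[t_0,t_0+\delta_0]$ the solution $u$ coincides with the fixed point of the Duhamel map
$$\Psi w(t)=\eta(t)\,e^{i(t-t_0)\Delta}u(t_0)-i\eta(t)\int_{t_0}^t e^{i(t-t')\Delta}\bigl(|w|^2w\bigr)(t')\,dt',$$
where $\eta$ is a smooth time cutoff adapted to scale $\delta$. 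The standard trilinear estimate $\||w|^2w\|_{X^{s,-\frac12+}}\lesssim \|w\|_{X^{s,\frac12+}}^3$ (available on $\mathbb{R}$ via \eqref{eq:L6estimate} and the $L^4_{t,x}$ bound \eqref{eq:L4estimate}, together with the algebra-type structure of $X^{s,b}$ for $s\ge 0$) gives the contraction on a ball of radius $\sim \|u(t_0)\|_{H^s}$ once $\delta$ is chosen small in terms of $\|u(t_0)\|_{H^1}$ only --- the point being that the $H^1$ norm controls the data at the regularity where the local theory closes, and higher regularity is simply transported. I would take $v$ to be this fixed point, extended globally in time by the cutoff $\eta$ (so $v\in X^{s,\frac12+}(\mathbb{R}\times\mathbb{R})$), and note $v|_{[t_0,t_0+\delta]}=u|_{[t_0,t_0+\delta]}$, which is \eqref{eq:Property of v1}.

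For \eqref{eq:Property of v2} I would run the contraction estimate at regularity $s$: on the fixed-point ball one has
$$\|v\|_{X^{s,\frac12+}}\le C\|u(t_0)\|_{H^s}+C\delta^{0+}\|v\|_{X^{1,\frac12+}}^2\|v\|_{X^{s,\frac12+}},$$
and choosing $\delta$ small enough (depending on the $H^1$ bound, hence on energy and mass) so that $C\delta^{0+}\|v\|_{X^{1,\frac12+}}^2\le \frac12$ absorbs the last term, yielding $\|v\|_{X^{s,\frac12+}}\le 2C\|u(t_0)\|_{H^s}$. For \eqref{eq:Property of v3} I would run the same estimate at $s=1$: since by \eqref{eq:Integrals} (with $k=1$) we have $\|u(t_0)\|_{H^1}\le B_1(\Phi)$ uniformly in $t_0$, the fixed-point argument gives $\|v\|_{X^{1,\frac12+}}\le 2C B_1(\Phi)$, a bound independent of $t_0$. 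The uniformity in $t_0$ is the whole reason this proposition is phrased the way it is, and it is exactly what makes it legitimate to iterate in later sections: the local existence time $\delta$ and the constant $C$ depend on the conserved quantities and not on how far out in time we are.

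The only genuinely delicate point --- and the one I would write out carefully --- is the claim that $\delta$ and $C$ \emph{depend continuously on energy and mass}. This follows because in the contraction argument $\delta$ is chosen as an explicit decreasing function of an upper bound for $\|u(t_0)\|_{H^1}$, and $\|u(t_0)\|_{H^1}$ is itself bounded by a continuous (indeed explicit) function of $E(u)$ and $M(u)$ via the inequality $\|u\|_{H^1}^2\le 2E(u)+M(u)$ (the energy is positive definite up to the mass term since the nonlinearity is defocusing, so there is no sign obstruction). Thus one can take $\delta=\delta\bigl(\text{some continuous function of }E,M\bigr)$ and likewise for $C$, and compositions of continuous functions are continuous. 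I expect the main obstacle in the write-up to be purely bookkeeping: making sure the trilinear $X^{s,b}$ estimate is invoked with matching indices, that the time-localization Lemma~\ref{Lemma 2.2} (or the cutoff $\eta$) is used to pass between the local solution and its global $X^{s,b}$ extension without losing the $N$-independence, and that all the constants are tracked as continuous functions of the conserved quantities rather than of $t_0$.
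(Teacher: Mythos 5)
Your proposal is correct and is essentially the same fixed-point argument that the paper invokes: the paper does not write out a proof but states that Proposition~\ref{Proposition 3.1} ``proceeds by an appropriate fixed-point method and is analogous to the proof of Proposition 3.1 in \cite{SoSt1},'' which is exactly the Duhamel contraction with a time cutoff, a trilinear $X^{s,b}$ estimate of Leibniz type, persistence of regularity from the $H^1$ level, and tracking of $\delta,C$ through the coercivity bound $\|u\|_{H^1}^2 \leq 2E(u)+M(u)$ that you describe. The only refinement worth spelling out in a write-up is that one should first close the contraction at regularity $s=1$ to obtain the uniform bound $\|v\|_{X^{1,\frac12+}}\lesssim B_1(\Phi)$, and then feed that bound back into the trilinear estimate at regularity $s$ to get \eqref{eq:Property of v2} with the same $\delta$, avoiding any apparent circularity in the absorption step.
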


\vspace{3mm}

The proof of Proposition \ref{Proposition 3.1} proceeds by an appropriate fixed-point method and is analogous to the proof of Proposition 3.1 in \cite{SoSt1}. Furthermore, from the mentioned proof, it follows that $\delta$ and $C$ depend continuously on energy and mass. For the details, we refer the reader to Appendix A in \cite{SoSt1}.

\vspace{3mm}

In the proof of the fact that $F_s$, as in the statement of Theorem \ref{Theorem 1}, depends continuously on the initial data, w.r.t. the $H^s$ topology, we will use the following:

\begin{proposition}(Continuity of conserved quantities)
\label{Proposition 3.2}
Suppose $n$ is a positive integer. Let $E_n$ denote the conserved quantity of (\ref{eq:cubicnls}), which, together with lower-order conserved quantities, we use to bound the $H^n$ norm of a solution. Then $E_n$ is continuous on $H^n$. Moreover, one can construct a function $B_n: H^n \rightarrow \mathbb{R}$ that satisfies (\ref{eq:Integrals}) and is continuous on $H^n$.
\end{proposition}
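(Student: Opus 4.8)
The plan is to build $E_n$ and $B_n$ explicitly out of the known tower of conservation laws for the cubic NLS and to track continuity at each stage. Recall from \cite{FadTak,ZM} that (\ref{eq:cubicnls}) possesses a sequence of polynomial conservation laws $H_0,H_1,H_2,\ldots$, where $H_k$ has leading term $\int |\partial_x^k u|^2\,dx$ together with lower-order terms that are integrals of polynomials in $u,\bar u$ and their derivatives of order $\le k-1$. Each of these densities is a finite sum of monomials of the form $\partial_x^{a_1}u\,\partial_x^{a_2}\bar u\cdots$, and I would first record the elementary fact that a multilinear expression $\int \prod_j \partial_x^{a_j}w_j\,dx$ with $\sum_j a_j \le 2k$ (and the constraint that at most one index equals $k$, the rest being $\le k-1$) is continuous as a map $H^k\to\mathbb{C}$; this follows from the fractional Leibniz/Gagliardo--Nirenberg inequalities together with the $H^{k}\hookrightarrow W^{j,p}$ Sobolev embeddings on $\mathbb{R}$, exactly as one controls the energy (\ref{eq:ConservationofEnergy}). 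Hence each individual $H_k$ is continuous on $H^k$, and a fortiori on $H^n$ for $k\le n$.

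Next I would isolate $E_n$. The point of the conservation-law tower is that a \emph{fixed finite} linear combination
$$\|u\|_{H^n}^2 \;\sim\; \sum_{k=0}^{n} c_k H_k(u) \;+\; (\text{lower-order continuous corrections})$$
holds, or more precisely one solves triangularly: $H_n$ controls $\|\partial_x^n u\|_{L^2}^2$ modulo terms already bounded by $\|u\|_{H^{n-1}}$, which in turn is handled by $H_{n-1}$, and so on down to $H_0 = M$. Setting $E_n := H_n$, the conservation of all the $H_k$ along the flow gives a bound $\|u(t)\|_{H^n}^2 \le G_n(H_0(\Phi),\ldots,H_n(\Phi))$ where $G_n$ is an explicit (polynomial-type, in particular continuous) function of its arguments — this is the quantitative content of (\ref{eq:Integrals}). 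Then I would simply define
$$B_n(\Phi) := \Big(G_n\big(H_0(\Phi),\ldots,H_n(\Phi)\big)\Big)^{1/2}.$$
Since each $H_k:H^n\to\mathbb{R}$ is continuous and $G_n$ is continuous, $B_n$ is continuous on $H^n$, and by construction $\|u(t)\|_{H^n}\le B_n(\Phi)$ for all $t$, which is (\ref{eq:Integrals}).

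The main obstacle is the bookkeeping in the triangular inversion: one must verify that at each level the "lower-order" terms genuinely involve only $\le n-1$ derivatives in $L^2$ and are dominated — via the multilinear Sobolev estimates above — by a continuous function of $\|u\|_{H^{n-1}}$ (and of the conserved mass/energy), so that the recursion closes with the polynomial structure of $G_n$ preserved. Rather than exhibiting the $H_k$ by hand, I would cite the standard construction of the NLS hierarchy (e.g. \cite{FadTak}) for the existence and the derivative count of the densities, and restrict the new work to: (i) the continuity lemma for the multilinear densities on $H^n(\mathbb{R})$, which is routine Sobolev calculus on the line, and (ii) the observation that composing finitely many continuous functions yields a continuous $B_n$. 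One small point worth stating explicitly is that the Gagliardo--Nirenberg constants are global on $\mathbb{R}$ (no domain issues), so the estimates and hence the continuity are uniform; this is where the argument is genuinely cleaner on $\mathbb{R}$ than it would be in a setting requiring localization.
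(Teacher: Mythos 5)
Your proposal is correct and follows essentially the same route as the paper: cite the Faddeev--Takhtajan hierarchy of polynomial conservation laws, prove continuity of each on $H^n$ by multilinear Sobolev estimates, and then recover a continuous bound $B_n$ on $\|u\|_{H^n}$ by triangular inversion and composition of continuous functions. The paper is slightly more concrete in that it writes out the explicit recursion $P_{k+1}=-i\bar u\,\partial_x(P_k/\bar u)+\sum_{l}P_lP_{k-l}$ and uses it to verify by induction the derivative count in the lower-order terms (which is precisely the ``bookkeeping obstacle'' you flag), and it packages the inversion as an explicit recursive definition $B_n(\Phi)=\tfrac12\big(Q_n(B_{n-1}(\Phi))+\sqrt{Q_n(B_{n-1}(\Phi))^2+\tfrac{4}{C_n}\widetilde{E_n}(\Phi)}\big)$ rather than your single closed-form $G_n$; these are cosmetic differences, not differences of substance.
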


\vspace{2mm}

The proof of Proposition \ref{Proposition 3.2} is given in Appendix A.

\vspace{3mm}

Although we are starting with initial data $\Phi$, which we are only assuming belongs to $H^s$, and hence with solutions of (\ref{eq:cubicnls}), which we only know belong to $H^s$, our calculations will require us to work with solutions which have more regularity. Hence, we will have to approximate our solutions to (\ref{eq:cubicnls}) with smooth ones, and argue by density. The density argument is made precise by the following result:

\begin{proposition}
\label{Proposition 3.3}
Suppose $u$ satisfies (\ref{eq:cubicnls}) with initial data $\Phi \in H^s$, and suppose each element of $(u^{(n)})$ satisfies (\ref{eq:cubicnls}) with initial data $\Phi_n$, where $\Phi_n \in \mathcal{S}(\mathbb{R})$ and $\Phi_n \stackrel{H^s}{\longrightarrow} \Phi$. Then, one has for all $t$:

$$u^{(n)}(t)\stackrel{H^s}{\longrightarrow} u(t).$$

\end{proposition}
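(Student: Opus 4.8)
The plan is to upgrade the local-in-time theory of Proposition \ref{Proposition 3.1} to a statement of Lipschitz dependence on the initial data in $X^{s,\frac{1}{2}+}$, and then to iterate this bound over a partition of $[0,t]$ into finitely many subintervals of a length $\delta$ that is uniform in $n$.

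First, since $\Phi_n \stackrel{H^s}{\longrightarrow} \Phi$ and $s \geq 1$, we also have $\Phi_n \to \Phi$ in $H^1$, so that the masses $M(u^{(n)})=M(\Phi_n)$ and energies $E(u^{(n)})=E(\Phi_n)$ converge to $M(\Phi)$ and $E(\Phi)$; in particular they lie in a fixed bounded set for $n$ large. Hence the parameters $\delta$ and $C$ furnished by Proposition \ref{Proposition 3.1} may be taken the same for $u$ and for all $u^{(n)}$ with $n$ large. For each such $n$ and each $t_0$, Proposition \ref{Proposition 3.1} produces auxiliary functions $v, v^{(n)} \in X^{s,\frac{1}{2}+}$ agreeing with $u, u^{(n)}$ on $[t_0,t_0+\delta]$ and satisfying $\|v\|_{X^{1,\frac{1}{2}+}}, \|v^{(n)}\|_{X^{1,\frac{1}{2}+}} \leq C$. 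Writing the Duhamel formula for $v^{(n)}$ and $v$ on $[t_0,t_0+\delta]$, subtracting, applying the $X^{s,b}$ trilinear estimate for the cubic NLS on $\mathbb{R}$ together with Lemma \ref{Lemma 2.2} to handle the time cutoff, and using the multilinearity of $w \mapsto |w|^2w$ to bound the nonlinearity difference by a sum of terms each carrying a factor of $v^{(n)}-v$ (the remaining factors being controlled by the uniform $X^{1,\frac{1}{2}+}$ bounds), one obtains, after shrinking $\delta$ if necessary in a way depending only on energy and mass,
$$\|v^{(n)}-v\|_{X^{s,\frac{1}{2}+}} \lesssim \|u^{(n)}(t_0)-u(t_0)\|_{H^s},$$
with implied constant depending only on energy and mass. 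By the embedding $X^{s,\frac{1}{2}+} \hookrightarrow C_t H^s_x$ this gives $\sup_{t \in [t_0,t_0+\delta]} \|u^{(n)}(t)-u(t)\|_{H^s} \lesssim \|u^{(n)}(t_0)-u(t_0)\|_{H^s}$.

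Now fix $t>0$ and an integer $m$ with $m\delta \geq t$. Applying the previous estimate successively on $[0,\delta],[\delta,2\delta],\ldots,[(m-1)\delta,m\delta]$ — which is legitimate because $\delta$ is the same at every step and the norms $\|u^{(n)}(k\delta)\|_{H^s}$ stay bounded uniformly in $n$ (they are dominated by $C^k\|\Phi_n\|_{H^s}$ via \eqref{eq:Property of v2}, and $\|\Phi_n\|_{H^s}\to\|\Phi\|_{H^s}$) — we conclude
$$\|u^{(n)}(t)-u(t)\|_{H^s} \leq C^m \|\Phi_n-\Phi\|_{H^s} \longrightarrow 0,$$
since $m$ and $C$ are fixed. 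The case $t<0$ is identical by time reversal, and $t=0$ is trivial. The only genuinely delicate point is guaranteeing that the step length $\delta$ is uniform over $n$ and over the finitely many iteration steps; this is exactly what the dependence of $\delta$ on energy and mass alone in Proposition \ref{Proposition 3.1}, together with their conservation and their continuity on $H^1$, provides.
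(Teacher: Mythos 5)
Your overall strategy — uniform local step length $\delta$ from energy/mass convergence, a Lipschitz estimate in $X^{s,\frac{1}{2}+}$ on each local interval, then iteration — is the right one, and is surely the approach of Proposition~3.4 in \cite{SoSt1} to which the paper defers. But the key display
$$\|v^{(n)}-v\|_{X^{s,\frac{1}{2}+}} \lesssim \|u^{(n)}(t_0)-u(t_0)\|_{H^s},\quad\text{``with implied constant depending only on energy and mass,''}$$
as you have set it up, does not follow from the trilinear estimate the way you describe, and the assertion that ``the remaining factors are controlled by the uniform $X^{1,\frac{1}{2}+}$ bounds'' is where the gap lies.

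When you expand $|v^{(n)}|^2v^{(n)}-|v|^2v$ telescopically into three terms each carrying a factor of $v^{(n)}-v$, and apply a trilinear $X^{s,b}$ estimate in the form needed to close a contraction in $X^{s,\frac{1}{2}+}$, the fractional-Leibniz structure of that estimate distributes the $s$ derivatives over all three factors. The terms where the $\langle\xi\rangle^s$ weight lands on the difference factor are indeed handled by the $X^{1,\frac{1}{2}+}$ bounds on the remaining factors and absorbed after shrinking $\delta$. But the terms where the $s$ derivatives land on one of the \emph{non}-difference factors produce contributions of the schematic form $\|v^{(n)}-v\|_{X^{\sigma,\frac{1}{2}+}}\,\|v^{(n)}\|_{X^{s,\frac{1}{2}+}}\,\|v^{(n)}\|_{X^{\sigma,\frac{1}{2}+}}$ with $\sigma\le 1$. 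To control these you need two additional ingredients you did not name: (i) the uniform $X^{s,\frac{1}{2}+}$ bounds on $v^{(n)}, v$ coming from $(\ref{eq:Property of v2})$, which depend on $\sup_n\|\Phi_n\|_{H^s}$ and not merely on energy and mass; and (ii) a \emph{lower-regularity} Lipschitz bound $\|v^{(n)}-v\|_{X^{1,\frac{1}{2}+}} \lesssim \|u^{(n)}(t_0)-u(t_0)\|_{H^1}$ (whose constant does depend only on energy and mass, since this is the well-posedness regularity), so that these cross terms are still $O(\|u^{(n)}(t_0)-u(t_0)\|_{H^s})$. With those two additions the Lipschitz estimate closes — but its constant then depends on $\|\Phi\|_{H^s}$ (or $\sup_n\|\Phi_n\|_{H^s}$) as well as energy and mass, not on energy and mass alone as you state. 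This does not affect the conclusion, since for fixed $t$ the number $m$ of iteration steps is fixed and all the relevant $H^s$ norms are uniformly bounded along the sequence, exactly as you observe at the end; but as written the estimate you display is not obtained by the mechanism you cite, and the constant dependence is misstated. Once the two-tier (low-regularity Lipschitz plus high-regularity a priori bound) structure is made explicit, the rest of your iteration argument is correct.
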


The proof of Proposition \ref{Proposition 3.3} is analogous to the proof of Proposition 3.4. from \cite{SoSt1}, given in Appendix B of the mentioned paper. The proof is very similar, so it will be omitted. We refer the reader to \cite{SoSt1} for details.

\vspace{3mm}

Proposition \ref{Proposition 3.3} allows us to work with smooth solutions and pass to the limit in the end.
Namely, we note that if we take initial data $\Phi_n$ as earlier, then, by persistence of regularity, $u^{(n)}(t)$ will belong to $H^{\infty}(\mathbb{R})$ for all $t$. If we knew that Theorem \ref{Theorem 1} were true for smooth solutions, we would obtain, for all $n \in \mathbb{N}$, and for all $t \in \mathbb{R}$:

$$\|u^{(n)}(t)\|_{H^s} \leq F_s(\Phi_n) (1+|t|)^{\alpha+}.$$

\vspace{2mm}

By letting $n \rightarrow \infty$, and using Proposition \ref{Proposition 3.2} and the continuity of $F_s$ on $H^s$, it would follow that for all $t \in \mathbb{R}$:

$$\|u(t)\|_{H^s} \leq F_s(\Phi) (1+|t|)^{\alpha+}.$$

We may henceforth work with $\Phi \in \mathcal{S}(\mathbb{R})$, which implies that $u(t) \in H^{\infty}(\mathbb{R})$ for all $t$. The claimed result is then deduced from this special case by the approximation procedure given earlier. We will make the same assumption in our study of the Hartree equation.

\vspace{3mm}

\subsection{An Iteration bound and Proof of Theorem \ref{Theorem 1}:}

Let $u$ denote the unique global solution to (\ref{eq:cubicnls}). From the previous arguments, we know that we can assume WLOG that for all $t \in \mathbb{R},\,u(t) \in H^{\infty}(\mathbb{R}).$
Our aim now is to use uniform bounds on $\|u(t)\|_{H^k}$ coming from (\ref{eq:Integrals}) to deduce bounds on $\|u(t)\|_{H^s}$. The key is to perform a frequency decomposition, similarly as in \cite{B5}.

\vspace{3mm}

Let $N>1$ be a parameter which we will determine later. We define the operator $Q$ by:

\begin{equation}
\label{eq:Qoperator}
\widehat{Qf}(\xi):=\chi_{|\xi|\geq N} \hat{f}(\xi).
\end{equation}

\vspace{3mm}

We write $s=k+\alpha$, for $k \in \mathbb{N},\,\alpha \in (0,1).$
Using the definition (\ref{eq:Qoperator}) and (\ref{eq:Integrals}), it follows that:

$$\|(I-Q)u(t)\|_{H^s} \lesssim N^{\alpha} \|(I-Q)u(t)\|_{H^k} \lesssim$$

\begin{equation}
\label{eq:LowFrequencyBound}
\lesssim N^{\alpha}\|u(t)\|_{H^k}  \leq N^{\alpha} B_k(\Phi).
\end{equation}

\vspace{3mm}

We will use (\ref{eq:LowFrequencyBound}) to estimate the \emph{low-frequency part} of the solution.

\vspace{3mm}

The key now is to estimate the \emph{high-frequency part} of the solution. This is done by the following
iteration bound:

\begin{proposition}
\label{Proposition 3.4}
Let $\delta=\delta(\Phi)>0$ be as in Proposition \ref{Proposition 3.1}. Then, there exists a continuous function $C:H^1 \rightarrow \mathbb{R}$ such that for all $t_0 \in \mathbb{R}$, one has:

$$\|Qu(t_0+\delta)\|_{H^s}^2-\|Qu(t_0)\|_{H^s}^2 \leq \frac{C(\Phi)}{N^{1-}} \|u(t_0)\|_{H^s}^2.$$
\end{proposition}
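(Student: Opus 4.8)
The plan is to differentiate $\|Qu(t)\|_{H^s}^2$ in time, use the equation to express the derivative as a multilinear expression, and then extract a factor of $N^{-1+}$ by means of the improved Strichartz estimate (Proposition \ref{chiImprovedStrichartz}). First I would observe that since $Q$ is a Fourier multiplier and $u$ is smooth, $\frac{d}{dt}\|Qu(t)\|_{H^s}^2 = 2\,\mathrm{Re}\,\langle D^sQ u_t, D^sQu\rangle$; substituting $iu_t = -\Delta u + |u|^2 u$ and noting that the $-\Delta u$ term is purely imaginary in this pairing (it contributes nothing to the real part), we get
$$\frac{d}{dt}\|Qu(t)\|_{H^s}^2 = 2\,\mathrm{Re}\,\Big(i\int_{\mathbb{R}} \overline{D^sQ(|u|^2u)}\, D^sQu\, dx\Big),$$
which on the Fourier side is a $4$-linear expression over $\Gamma_4$ with multiplier built from $\langle\xi\rangle^s$ factors and the cutoffs $\chi_{|\xi|\ge N}$. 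Integrating from $t_0$ to $t_0+\delta$ gives
$$\|Qu(t_0+\delta)\|_{H^s}^2-\|Qu(t_0)\|_{H^s}^2 = \int_{t_0}^{t_0+\delta}(\cdots)\,dt,$$
and I would insert the cutoff $\chi=\chi_{[t_0,t_0+\delta]}(t)$ so that the time integral becomes a spacetime integral of a product of four factors, one of which carries $\chi$. Then I would use Proposition \ref{Proposition 3.1} to replace $u$ on $[t_0,t_0+\delta]$ by an extension $v$ with $\|v\|_{X^{s,1/2+}}\lesssim\|u(t_0)\|_{H^s}$ and $\|v\|_{X^{1,1/2+}}\lesssim 1$, so that all $X^{s,b}$-norms below are controlled by $\|u(t_0)\|_{H^s}$ and the conserved $H^1$-data.

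The heart of the argument is the frequency analysis of this $4$-linear form. I would perform a Littlewood-Paley decomposition of each of the four factors into pieces localized at dyadic frequencies $N_1, N_2, N_3, N_4$. Because of the projection $Q$, at least one output factor has frequency $\gtrsim N$; by the convolution constraint $\xi_1+\xi_2+\xi_3+\xi_4=0$, the two largest of the $N_j$ must be comparable, say $N_{\max}\sim N_{\mathrm{sub}}\gtrsim N$. I would pair the factor at the largest frequency with a factor at a much lower frequency (there must be one, since the frequencies cannot all be comparably large without violating $\sum\xi_j=0$ unless all are $\gtrsim N$, a case one handles by brute force using the two high derivatives available) and apply Proposition \ref{chiImprovedStrichartz} to that pair, gaining $N_{\max}^{-1/2+}\gtrsim N^{-1/2+}$; the remaining two factors are paired and estimated in $L^2_{t,x}$ by the $L^4_{t,x}$-Strichartz estimate (\ref{eq:L4estimate}). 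Summing over the dyadic pieces, the $\langle\xi\rangle^s$-weights get distributed so that only the two high-frequency factors carry a near-full power of $s$ (absorbed into $\|v\|_{X^{s,1/2+}}^2$) while the low factors carry at most a power of $1$ (absorbed into $\|v\|_{X^{1,1/2+}}$); the dyadic sums converge because each application of the bilinear improved Strichartz estimate is summable in the low frequency, and the comparability $N_{\max}\sim N_{\mathrm{sub}}$ kills the sum over the top two scales. Applying the improved bilinear bound twice (once to each high-low pair) would in fact give $N^{-1+}$ directly, which is the claimed decay.

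The main obstacle I anticipate is the bookkeeping in the case analysis for the dyadic frequencies: one must verify that in every configuration consistent with $\xi_1+\xi_2+\xi_3+\xi_4=0$ and with at least one frequency $\gtrsim N$, it is possible to form a high--low pair so that Proposition \ref{chiImprovedStrichartz} (or its consequence through Corollary \ref{Corollary 2.4}) applies with a genuine gain of $N^{-1+}$, while the leftover factors are controlled purely by $L^4_{t,x}$ and carry only the "cheap" $H^1$-weight. The borderline case is when three or four of the frequencies are all $\gtrsim N$ and comparable; here one cannot extract a high--low gain from the improved estimate, but instead one has (at least) two extra derivatives to spare relative to the $H^s\times H^1\times\cdots$ budget, and these can be traded for powers of $N^{-1}$ directly, so the claimed bound still holds — I would isolate this as a short separate sub-case. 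Once the frequency sums are organized, the remaining steps (pulling out $\|v\|_{X^{s,1/2+}}^2\lesssim C(\Phi)\|u(t_0)\|_{H^s}^2$, absorbing the $\|v\|_{X^{1,1/2+}}\lesssim C(\Phi)$ factors, and checking that $C$ depends continuously on the $H^1$-norm via Proposition \ref{Proposition 3.1}) are routine, and the proposition follows.
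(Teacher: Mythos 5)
Your proposal is correct and follows essentially the same route as the paper: differentiate $\|Qu\|_{H^s}^2$, reduce to a quadrilinear spacetime integral, Littlewood--Paley decompose and split by which dyadic piece is largest, applying the bilinear improved Strichartz estimate (Proposition \ref{chiImprovedStrichartz}) to genuine high--low pairs and trading the spare derivative budget (from $\|v\|_{X^{1,1/2+}}$) for a factor of $N^{-1}$ in the configurations where the frequencies are all comparable. The paper organizes the case analysis slightly differently (it only invokes the improved bilinear estimate when the maximum of $N_2,N_3,N_4$ is the one carrying the $D^s$ weight, using plain $L^4_{t,x}$-Strichartz plus the derivative gain in the other cases), but the mechanism for extracting $N^{-1+}$ is the same as yours, and your identification of the borderline ``all frequencies $\gtrsim N$'' case and its resolution via the extra derivatives matches what the paper does.
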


\vspace{3mm}

Before we prove Proposition \ref{Proposition 3.4}, let us note how it implies Theorem \ref{Theorem 1}.

\vspace{3mm}

\begin{proof} (of Theorem \ref{Theorem 1} assuming Proposition \ref{Proposition 3.4})

Let us fix $t_0 \in \mathbb{R}$. It follows that:

$$\|Qu(t_0+\delta)\|_{H^s}^2 \leq \big(1+\frac{C(\Phi)}{N^{1-}} \big) \|Qu(t_0)\|_{H^s}^2 +
\frac{C(\Phi)}{N^{1-}}\|(I-Q)u(t_0)\|_{H^s}^2.$$
By (\ref{eq:LowFrequencyBound}), it follows that:

\begin{equation}
\label{eq:definitionofK}
\frac{C(\Phi)}{N^{1-}}\|(I-Q)u(t_0)\|_{H^s}^2 \lesssim \frac{C(\Phi)}{N^{1-}}N^{2\alpha}B_k^2(\Phi)=:K(N,\Phi).
\end{equation}
\vspace{3mm}
If we multiply $K$ by an appropriate constant, we can write, for all $t_0 \in \mathbb{R}$:

\begin{equation}
\label{eq:IterationBound}
\|Qu(t_0+\delta)\|_{H^s}^2 \leq \big(1+\frac{C(\Phi)}{N^{1-}} \big) \|Qu(t_0)\|_{H^s}^2 + K(N,\Phi).
\end{equation}
Given $n \in \mathbb{N}$, we take $t_0=0, \delta, 2\delta, \ldots, n\delta$ and apply (\ref{eq:IterationBound}) to deduce the inequalities:

\vspace{4mm}

\begin{eqnarray*}
\|Qu(\delta)\|_{H^s}^2 &\leq& \big(1+\frac{C(\Phi)}{N^{1-}} \big) \|Qu(0)\|_{H^s}^2 + K(N,\Phi)\\
\|Qu(2\delta)\|_{H^s}^2 &\leq& \big(1+\frac{C(\Phi)}{N^{1-}} \big) \|Qu(\delta)\|_{H^s}^2 + K(N,\Phi)\\
& \vdots &\\
\|Qu((n-1)\delta)\|_{H^s}^2 &\leq& \big(1+\frac{C(\Phi)}{N^{1-}} \big) \|Qu((n-2)\delta)\|_{H^s}^2 + K(N,\Phi)\\
\|Qu(n\delta)\|_{H^s}^2 &\leq& \big(1+\frac{C(\Phi)}{N^{1-}} \big) \|Qu((n-1)\delta)\|_{H^s}^2 + K(N,\Phi)
\end{eqnarray*}

\vspace{3mm}

Let $\gamma:=1+\frac{C(\Phi)}{N^{1-}}$. Let us multiply the first inequality by $\gamma^{n-1}$, the second inequality by $\gamma^{n-2},\ldots$, and the $(n-1)$-st inequality by $\gamma$. We then sum to obtain:

\begin{equation}
\label{eq:IterationBound2}
\|Qu(n\delta)\|_{H^s}^2 \leq \big(1+\frac{C(\Phi)}{N^{1-}} \big)^n \|Qu(0)\|_{H^s}^2 + K(N,\Phi)(1+ \gamma + \cdots + \gamma^{n-1}).
\end{equation}
\vspace{3mm}
Let us now consider $n$ such that $n \lesssim N^{1-}$.
\vspace{2mm}
For such an $n$, we have:

\begin{equation}
\label{eq:FirstFactor}
(1+\frac{C(\Phi)}{N^{1-}})^n=O(R_1(\Phi)).
\end{equation}
and hence:

$$1+ \gamma+ \cdots + \gamma^{n-1}= \frac{\gamma^n-1}{\gamma-1}= \frac{\big (1+\frac{C(\Phi)}{N^{1-}} \big)^n-1}{\big(1+\frac{C(\Phi)}{N^{1-}} \big)-1}=$$

\begin{equation}
\label{eq:SecondFactor}
=\frac{\big(1+\frac{C(\Phi)}{N^{1-}} \big)^n-1}{\frac{C(\Phi)}{N^{1-}}}=O(N^{1-}R_2(\Phi)).
\end{equation}
We can take the functions $R_1,R_2:H^1 \rightarrow \mathbb{R}$ to be continuous.
\vspace{3mm}
If we then combine (\ref{eq:definitionofK}),(\ref{eq:FirstFactor}),(\ref{eq:SecondFactor}) with (\ref{eq:IterationBound2}), it follows that:

$$\|Qu(n\delta)\|_{H^s}^2 \lesssim R_1(\Phi)\|Q\Phi\|_{H^s}^2+ R_2(\Phi)N^{2\alpha}B_k^2(\Phi).$$
\vspace{3mm}
Hence, by continuity properties of $B_k$ coming from from Proposition \ref{Proposition 3.2}, and by the construction of $R_1,R_2$, we can find a continuous function $R_3:H^s \rightarrow \mathbb{R}$ such that for all $n\lesssim N^{1-}$:

\begin{equation}
\label{eq:HighFrequencyBound}
\|Qu(n\delta)\|_{H^s} \leq R_3(\Phi)(1+N^{\alpha}).
\end{equation}
\vspace{3mm}
Combining (\ref{eq:LowFrequencyBound}) and (\ref{eq:HighFrequencyBound}), we deduce that there exists a continuous function $R_4: H^s \rightarrow \mathbb{R}$, such that for all $n\lesssim N$, one has:

$$\|u(n\delta)\|_{H^s}\leq R_4(\Phi)(1+N^{\alpha}).$$

Finally, by using appropriate local-in-time bounds on each of the $n$ intervals of size $\delta$, it follows that there exists a continuous function $R:H^s \rightarrow \mathbb{R}$ such that, for all $T \lesssim N^{1-}\delta$, one has:

\begin{equation}
\label{eq:HsBound1}
\|u(T)\|_{H^s}\leq R(\Phi)(1+N)^{\alpha}.
\end{equation}

Let us now take:

$$T \sim N^{1-}\delta.$$
Then:

$$N \sim \Big(\frac{T}{\delta}\Big)+.$$

(This is the step in which we choose the parameter $N$.)

\vspace{3mm}

Consequently, since $\delta=\delta(\Phi)>0$ is a continuous function on $H^1$, it follows that there exists a continuous function $F_s$ on $H^s$ such that for $T>1$:

\begin{equation}
\label{eq:HsBound2}
\|u(T)\|_{H^s}\leq F_s(\Phi)(1+T)^{\alpha+}.
\end{equation}

\vspace{3mm}

From local well-posedness, we get the same bound for times in $[0,1]$. By time reversibility, we also get the bound for negative times. Theorem \ref{Theorem 1} now follows.

\end{proof}

\vspace{4mm}
Let us now prove Proposition \ref{Proposition 3.4}

\begin{proof}

We know that: $u_t=i \Delta u - i|u|^2u$. Hence, we compute:

$$\frac{d}{dt} \|Qu(t)\|_{H^s}^2= \frac{d}{dt} \langle D^s Qu(t), D^s Qu(t) \rangle= 2Re \langle D^s Qu, D^s Qu_t \rangle= $$

$$=2Re \langle D^s Qu, D^s u_t \rangle=2 Re \langle D^s Qu, iD^s \Delta u \rangle - 2Re \langle D^s Qu, iD^s(|u|^2u) \rangle =$$

\begin{equation}
\label{eq:HighFrequencyDerivative}
=-2\, Im \langle D^s Qu, D^s(|u|^2u) \rangle.
\end{equation}

\vspace{3mm}

We note that in the third equality, we used Parseval's identity and the definition of $Q$ to omit the operator $Q$ in the second factor, and in the fifth equality, we argued similarly and used the fact that
$$\langle D^s Qu, D^s \Delta u \rangle = \langle D^s Qu, D^s \Delta Qu \rangle \in \mathbb{R}. $$

\vspace{2mm}

It is important to remark that this quantity is indeed finite since $u(t) \in H^{\infty}$. This is what allows us to differentiate in time and use the previous formulae.

\vspace{3mm}

Hence, if we fix $t_0 \in \mathbb{R}$, we obtain:

$$\|Qu(t_0+\delta)\|_{H^s}^2-\|Qu(t_0)\|_{H^s}^2= \int_{t_0}^{t_0+\delta} \frac{d}{dt} \|Qu(t)\|_{H^s}^2=$$

$$=- \int_{t_0}^{t_0+\delta} 2 Im \langle D^s Qu, D^s(|u|^2u) \rangle dt.$$

Thus, it suffices to estimate:

$$|\int_{t_0}^{t_0+\delta} \langle D^s Qu, D^s(|u|^2u) \rangle dt\,|.$$

\vspace{3mm}

Let $v$ be the function we obtain by Proposition \ref{Proposition 3.1}, if we are considering the time $t_0$ we fixed earlier. For the $\delta>0$, which we obtain by Proposition \ref{Proposition 3.1}, we denote:

$$\chi(t):=\chi_{[t_0,t_0+\delta]}(t).$$

Then:

$$ \int_{t_0}^{t_0+\delta} \langle D^s Qu, D^s(|u|^2u) \rangle dt=
\int_{t_0}^{t_0+\delta} \langle D^s Qv, D^s(|v|^2v) \rangle dt=$$

$$=\int_{\mathbb{R}} \int_{\mathbb{R}} \chi(t) D^s Qv D^s(\bar{v} v \bar{v}) dx dt.$$

\vspace{3mm}

With notation as in Section 2 for dyadic integers $N_1,N_2,N_3,N_4$, we define:

$$I_{N_1,N_2,N_3,N_4}:=\int_{\mathbb{R}}\int_{\mathbb{R}} \chi(t) D^s Qv_{N_1} D^s(\overline{v_{N_2}}v_{N_3}
\overline{v_{N_4}}) dx dt.$$

\vspace{3mm}

By definition of $Q$ and the fact that $\sum \xi_j=0,|\xi_j| \sim N_j $, we deduce that $I_{N_1,N_2,N_3,N_4}$ is zero unless the following conditions hold:

\begin{equation}
\label{eq:Conditions on Nj1}
N_1 \gtrsim N.
\end{equation}

\begin{equation}
\label{eq:Conditions on Nj2}
max\,\{N_2,N_3,N_4\} \gtrsim N_1.
\end{equation}
\vspace{3mm}
By Parseval's identity, the expression $I_{N_1,N_2,N_3,N_4}$ is:

\vspace{2mm}

$$\sim \int_{\sum \tau_j =0} \int_{\sum \xi_j =0} (\chi(t) D^s Qv_{N_1})\,\widetilde{\,}\,(\xi_1,\tau_1)
\,\langle \xi_2 \rangle^s \,(\overline{v_{N_2}}v_{N_3}
\overline{v_{N_4}})\,\widetilde{\,}\,(\xi_2,\tau_2) \, d\xi_j d \tau_j=$$

$$=\int_{\sum \tau_j =0} \int_{\sum \xi_j =0} (\chi(t) D^s Qv_{N_1})\,\widetilde{\,}\,(\xi_1,\tau_1)\,
\langle \xi_2 + \xi_3 + \xi_4 \rangle^s \, (\overline{v_{N_2}})\,\widetilde{\,}\,(\xi_2,\tau_2)\,
\widetilde{v_{N_3}}(\xi_3,\tau_3) \, (\overline{v_{N_4}})\,\widetilde{\,}\,(\xi_4,\tau_4) \, d\xi_j d \tau_j$$
\vspace{3mm}
So, by the triangle inequality:

\vspace{3mm}

$$|I_{N_1,N_2,N_3,N_4}| \lesssim$$

$$\int_{\sum \tau_j =0} \int_{\sum \xi_j =0} |(\chi(t) D^s Qv_{N_1})\,\widetilde{\,}\,(\xi_1,\tau_1)|\,
\langle \xi_2 + \xi_3 + \xi_4 \rangle^s \,|(\overline{v_{N_2}})\,\widetilde{\,}\,(\xi_2,\tau_2)|\,
|\widetilde{v_{N_3}}(\xi_3,\tau_3)| \, |(\overline{v_{N_4}})\,\widetilde{\,}\,(\xi_4,\tau_4)| \, d\xi_j d \tau_j.$$
\vspace{4mm}
We now use a \emph{``Fractional Leibniz Rule''}, i.e. we note that:

$$\langle \xi_2 + \xi_3 + \xi_4 \rangle^s \lesssim \langle \xi_2 \rangle^s + \langle \xi_3 \rangle^s + \langle \xi_4 \rangle^s.$$
\vspace{3mm}
Hence, by symmetry \footnote{From the argument that follows, we see that the two other terms are estimated analogously. The fact that the $D^s$ falls on a term with or without a complex conjugate doesn't matter in the argument.}, it suffices to estimate:

\vspace{3mm}

$$J_{N_1,N_2,N_3,N_4}:= $$
$$\int_{\sum \tau_j =0} \int_{\sum \xi_j =0} |(\chi(t) D^s Qv_{N_1})\,\widetilde{\,}\,(\xi_1,\tau_1)|
\,(\langle \xi_2 \rangle^s |(\overline{v_{N_2}})\,\widetilde{\,}\,(\xi_2,\tau_2)|)\,
|\widetilde{v_{N_3}}(\xi_3,\tau_3)| \, |(\overline{v_{N_4}})\,\widetilde{\,}\,(\xi_4,\tau_4)| \, d\xi_j d \tau_j \sim$$

$$\sim \int_{\sum \tau_j =0} \int_{\sum \xi_j =0} |(\chi(t) D^s Qv_{N_1})\,\widetilde{\,}\,(\xi_1,\tau_1)|\,
|(\overline{D^s v_{N_2}})\,\widetilde{\,}\,(\xi_2,\tau_2)|\,
|\widetilde{v_{N_3}}(\xi_3,\tau_3)| \, |(\overline{v_{N_4}})\,\widetilde{\,}\,(\xi_4,\tau_4)| \, d\xi_j d \tau_j.$$
\vspace{3mm}
Let us define:

\begin{equation}
\label{eq:F1}
F_1(x,t):= \int_{\mathbb{R}} \int_{\mathbb{R}} |(\chi(t) D^s Qv_{N_1})\,\widetilde{\,}\,(\xi,\tau)| e^{i (x \xi +t \tau)} d \xi d \tau.
\end{equation}

\begin{equation}
\label{eq:F2}
F_2(x,t):= \int_{\mathbb{R}} \int_{\mathbb{R}} |(D^s v_{N_2})\,\widetilde{\,}\,(\xi,\tau)| e^{i (x \xi +t \tau)} d \xi d \tau.
\end{equation}

\begin{equation}
\label{eq:F34}
F_j(x,t):= \int_{\mathbb{R}} \int_{\mathbb{R}} |\widetilde{v_{N_j}} (\xi,\tau)| e^{i (x \xi +t \tau)} d \xi d \tau,\,\,\mbox{for}\,\,j=3,4.
\end{equation}
\vspace{3mm}
Hence, by Parseval's identity, since all the $\widetilde{F_j}$ are real-valued, we obtain:

\begin{equation}
\label{eq:JNjParseval}
J_{N_1,N_2,N_3,N_4} \sim \int_{\mathbb{R}} \int_{\mathbb{R}} F_1 \overline{F_2} F_3 \overline{F_4} dx dt.
\end{equation}

\vspace{3mm}
We consider the following Cases:

\vspace{4mm}

\textbf{Case 1:} $max\,\{N_2,N_3,N_4 \}=N_3$ or $max\,\{N_2,N_3,N_4 \}=N_4$.

\vspace{3mm}

Let us WLOG suppose that $max\,\{N_2,N_3,N_4 \}=N_3$, since the case $max\,\{N_2,N_3,N_4 \}=N_4$ is analogous.
\vspace{3mm}
Here:

\vspace{3mm}

$|J_{N_1,N_2,N_3,N_4}|=J_{N_1,N_2,N_3,N_4}$, which is by $(\ref{eq:JNjParseval})$ and by an $L^4_{t,x}, L^4_{t,x}, L^4_{t,x}, L^4_{t,x}$ H\"{o}lder's inequality:

$$\lesssim \|F_1\|_{L^4_{t,x}} \|\overline{F_2}\|_{L^4_{t,x}} \|F_3\|_{L^4_{t,x}} \|\overline{F_4}\|_{L^4_{t,x}}=$$

$$=\|F_1\|_{L^4_{t,x}} \|F_2\|_{L^4_{t,x}} \|F_3\|_{L^4_{t,x}} \|F_4\|_{L^4_{t,x}},$$
which by using (\ref{eq:L4estimate}) is:

$$\lesssim \|F_1\|_{X^{0,\frac{3}{8}+}} \|F_2\|_{X^{0,\frac{3}{8}+}} \|F_3\|_{X^{0,\frac{3}{8}+}} \|F_4\|_{X^{0,\frac{3}{8}+}} $$
\vspace{2mm}
By definition of the functions $F_j$, and by the fact that taking absolute values in the spacetime Fourier transform doesn't change the $X^{s,b}$ norm, it follows that the previous expression is:

$$\sim \|\chi(t)D^s Qv_{N_1}\|_{X^{0,\frac{3}{8}+}} \|D^s v_{N_2}\|_{X^{0,\frac{3}{8}+}} \|v_{N_3}\|_{X^{0,\frac{3}{8}+}}
\|v_{N_4}\|_{X^{0,\frac{3}{8}+}}$$
From Lemma \ref{Lemma 2.2} and the fact that $\frac{3}{8}+ <\frac{1}{2}$, this expression is:

$$\lesssim \|D^s Qv_{N_1}\|_{X^{0,\frac{3}{8}++}} \|D^s v_{N_2}\|_{X^{0,\frac{3}{8}+}} \|v_{N_3}\|_{X^{0,\frac{3}{8}+}}
\|v_{N_4}\|_{X^{0,\frac{3}{8}+}} \lesssim $$

$$\lesssim \|D^s Qv_{N_1}\|_{X^{0,\frac{3}{8}++}} \|D^s v_{N_2}\|_{X^{0,\frac{3}{8}+}} \frac{1}{N_3} \|v_{N_3}\|_{X^{1,\frac{3}{8}+}}
\|v_{N_4}\|_{X^{0,\frac{3}{8}+}} \lesssim $$

$$\lesssim \frac{1}{N_3} \|v\|_{X^{s,\frac{1}{2}+}}^2 \|v\|_{X^{1,\frac{1}{2}+}}^2 $$
From Proposition \ref{Proposition 3.1}, we bound this by:

\begin{equation}
\label{eq:Case1bound}
\leq \frac{C(\Phi)}{N_3}\|u(t_0)\|_{H^s}^2.
\end{equation}
\vspace{2mm}
Let us observe that in this case, we have:

$$N_3 \geq N_2,N_4 \,\, \mbox{and} \,\, N_3 \gtrsim N_1 \gtrsim N.$$
Hence, we have obtained a favorable decay factor of $\frac{1}{N_3}$.

\vspace{4mm}

\textbf{Case 2:} $max \{N_2,N_3,N_4\} = N_2$.

\vspace{3mm}

\textbf{Subcase 1:} $N_2 \gg N_3,N_4$.

\vspace{3mm}

Since $\sum \xi_j=0$ and $|\xi_j| \sim N_j$, it follows that $N_1 \sim N_2$.
Hence:

\begin{equation}
\label{eq:NjSubcase1}
N_1 \sim N_2 \gtrsim N \,\, \mbox{and} \,\,  N_1 \sim N_2 \gg N_3,N_4.
\end{equation}

In this subcase, we will have to argue a little bit harder. The main tools that we will use will be the \emph{improved Strichartz estimate} Proposition \ref{Proposition 2.3}, and its modification, Proposition \ref{chiImprovedStrichartz}.

We use $(\ref{eq:JNjParseval})$ and an $L^2_{t,x}, L^2_{t,x}$ H\"{o}lder inequality to deduce that:

$$|J_{N_1,N_2,N_3,N_4}| \lesssim \|F_1 F_3\|_{L^2_{t,x}} \|F_2 F_4\|_{L^2_{t,x}}$$
By the assumption on the frequencies, $(\ref{eq:F1})$, $(\ref{eq:F2})$, $(\ref{eq:F34})$, Proposition \ref{chiImprovedStrichartz} and Proposition \ref{Proposition 2.3}, this expression is:

$$\lesssim \big( \frac{1}{N_1^{\frac{1}{2}-}} \|D^s Qv_{N_1}\|_{X^{0,\frac{1}{2}+}} \|v_{N_3}\|_{X^{0,\frac{1}{2}+}} \big) \big( \frac{1}{N_2^{\frac{1}{2}}}
\|D^s v_{N_2}\|_{X^{0,\frac{1}{2}+}} \|v_{N_4}\|_{X^{0,\frac{1}{2}+}} \big)$$
By $(\ref{eq:NjSubcase1})$, this expression is:

$$\lesssim \frac{1}{N_2^{1-}} \|v\|_{X^{s,\frac{1}{2}+}}^2 \|v\|_{X^{1,\frac{1}{2}+}}^2$$
We now use Proposition \ref{Proposition 3.1} to deduce that in Subcase 1, one has:

\begin{equation}
\label{eq:Subcase1}
|J_{N_1,N_2,N_3,N_4}| \leq \frac{C(\Phi)}{N_2^{1-}} \|u(t_0)\|_{H^s}^2.
\end{equation}
\vspace{3mm}
By $(\ref{eq:NjSubcase1})$, we notice that in this Subcase $\frac{1}{N_2^{1-}}$ is again a favorable decay factor
\vspace{4mm}

\textbf{Subcase 2:} $N_2 \sim N_3 \gtrsim N_4$ or $N_2 \sim N_4 \gtrsim N_3$.

\vspace{3mm}

Let us consider WLOG the case when $N_2 \sim N_3 \gtrsim N_4$, since the case $N_2 \sim N_4 \gtrsim N_3$ is analogous.
\vspace{3mm}
By the same argument as in Case 1, it follows that:

$$|J_{N_1,N_2,N_3,N_4|} \lesssim \frac{1}{N_3} \|v\|_{X^{s,\frac{1}{2}+}}^2 \|v\|_{X^{1,\frac{1}{2}+}}^2.$$
\vspace{3mm}
Since $N_3 \sim N_2$, it follows that:

$$|J_{N_1,N_2,N_3,N_4}| \lesssim \frac{1}{N_2} \|v\|_{X^{s,\frac{1}{2}+}}^2 \|v\|_{X^{1,\frac{1}{2}+}}^2 \leq$$

\begin{equation}
\label{eq:Subcase2}
\leq \frac{C(\Phi)}{N_2} \|u(t_0)\|_{H^s}^2.
\end{equation}
\vspace{3mm}
In this Subcase, $\frac{1}{N_2}$ is an acceptable decay factor.

\vspace{3mm}

Combining (\ref{eq:Subcase1}) and (\ref{eq:Subcase2}), it follows that in Case 2, one has the bound:

\begin{equation}
\label{eq:Case2bound}
|J_{N_1,N_2,N_3,N_4}| \leq \frac{C(\Phi)}{N_2^{1-}}\|u(t_0)\|_{H^s}^2.
\end{equation}
\vspace{3mm}
We now combine (\ref{eq:Case1bound}), (\ref{eq:Case2bound}) and sum in the dyadic integers $N_j$, keeping in mind the assumptions (\ref{eq:Conditions on Nj1}), (\ref{eq:Conditions on Nj2}), and the assumptions of each case. It follows that:

$$|\sum_{N_j} J_{N_1,N_2,N_3,N_4}| \leq \frac{C(\Phi)}{N^{1-}} \|u(t_0)\|_{H^s}^2.$$
Hence, by construction of $J_{N_1,N_2,N_3,N_4}$, we deduce:

$$|\sum_{N_j} I_{N_1,N_2,N_3,N_4}| \leq \frac{C(\Phi)}{N^{1-}} \|u(t_0)\|_{H^s}^2.$$
\vspace{3mm}
The fact that $C(\Phi)$ depends continuously on $\Phi$ w.r.t the $H^1$ topology follows from Proposition \ref{Proposition 3.1}, as well as the same continuous dependence of $\delta$, energy, mass, and the uniform bound on the $H^1$ norm of $u$.
\vspace{3mm}
Proposition \ref{Proposition 3.4} now follows.
\end{proof}

\vspace{5mm}

\section{The Hartree equation.}

\subsection{Basic facts about the equation and definition of the $\mathcal{D}$ operator.}

As in the case of the cubic NLS, we will take $\Phi \in \mathcal{S}(\mathbb{R})$ in order to rigorously justify all of our calculations. The general claim follows by density and the Approximation Lemma, i.e. Proposition \ref{Proposition 3.3} applied to $(\ref{eq:Hartree})$.

\vspace{2mm}

The same iteration argument that we used for the cubic equation doesn't work for $(\ref{eq:Hartree})$, since the only conserved quantities that we have at our disposal are mass and energy. We now adapt to the non-periodic setting the \emph{upside-down I-method} approach that we used on $S^1$ in \cite{SoSt1}.

\vspace{2mm}

We first define $\theta_0: \mathbb{R}\rightarrow \mathbb{R}$ by:

\begin{equation}
\label{eq:theta_0}
\theta_0(\xi) :=
\begin{cases}
  |\xi|^s\,,   \mbox{if }|\xi| \geq 2\\
  \,1,\,  \mbox{if } |\xi| \leq 1.
\end{cases}
\end{equation}
\vspace{2mm}
We extend $\theta_0$ for $1\leq |\xi| \leq 2$ such that $\theta_0$ is even, smooth on $\mathbb{R}$, and such that it is non-decreasing on $[0,+\infty)$.
By construction, we then obtain:

\begin{equation}
\label{eq:theta0prime}
|\theta_0'(\xi)|\lesssim \frac{|\theta_0(\xi)|}{|\xi|}
\end{equation}

\begin{equation}
\label{eq:theta0doubleprime}
|\theta_0''(\xi)|\lesssim \frac{|\theta_0(\xi)|}{|\xi|^2}
\end{equation}

\begin{equation}
\label{eq:theta0sum}
\theta_0(x+y) \lesssim \theta_0(x)+\theta_0(y).
\end{equation}
\vspace{2mm}
Suppose now that $N>1$ is given. Then, we define:

$$\theta(\xi):=\theta_0(\frac{\xi}{N}).$$
Hence:

\begin{equation}
\label{eq:theta}
\theta(\xi) :=
\begin{cases}
  \big(\frac{|\xi|}{N}\big)^s\,,   \mbox{if }|\xi| \geq 2N\\
  \,1,\,  \mbox{if } |\xi| \leq N.
\end{cases}
\end{equation}
\vspace{2mm}
From $(\ref{eq:theta0prime})$,$(\ref{eq:theta0doubleprime})$, and $(\ref{eq:theta0sum})$, we obtain:

\begin{equation}
\label{eq:thetaprime}
|\theta'(\xi)|\lesssim \frac{|\theta(\xi)|}{|\xi|}
\end{equation}

\begin{equation}
\label{eq:thetadoubleprime}
|\theta''(\xi)|\lesssim \frac{|\theta(\xi)|}{|\xi|^2}
\end{equation}

\begin{equation}
\label{eq:thetasum}
\theta(x+y) \lesssim \theta(x)+\theta(y).
\end{equation}
\vspace{2mm}
Having defined $\theta$, we define the $\mathcal{D}$-operator by:

\begin{equation}
\label{eq:Doperator}
\widehat{\mathcal{D}f}(\xi):=\theta(\xi) \widehat{f}(\xi).
\end{equation}
\vspace{2mm}
One then has the bound:

\begin{equation}
\label{eq:Doperatorbound}
\|\mathcal{D}f\|_{L^2} \lesssim \|f\|_{H^s} \lesssim N^s \|\mathcal{D}f\|_{L^2}.
\end{equation}
\vspace{2mm}
Let $u$ denote the global solution of $(\ref{eq:Hartree})$.
We then have the following result:

\vspace{2mm}

\begin{proposition}
\label{Proposition 4.1}
Given $t_0 \in \mathbb{R}$, there exists a globally defined function $v:\mathbb{R} \times \mathbb{R} \rightarrow \mathbb{C}$ satisfying the properties:

\begin{equation}
\label{eq:properties of v12}
v|_{[t_0,t_0+\delta]}=u|_{[t_0,t_0+\delta]}.
\end{equation}

\begin{equation}
\label{eq:properties of v22}
\|v\|_{X^{1,\frac{1}{2}+}}\leq C(s,E(\Phi),M(\Phi))
\end{equation}

\begin{equation}
\label{eq:properties of v32}
\|\mathcal{D}v\|_{X^{0,\frac{1}{2}+}}\leq C(s,E(\Phi),M(\Phi)) \|\mathcal{D}u(t_0)\|_{L^2}.
\end{equation}
Moreover, $\delta$ and $C$ can be chosen to depend continuously on the energy and mass.
\end{proposition}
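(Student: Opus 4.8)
The plan is to construct $v$ by a standard contraction-mapping argument applied to a time-truncated Duhamel formulation of (\ref{eq:Hartree}), following the proof of Proposition \ref{Proposition 3.1} (equivalently, Appendix A of \cite{SoSt1}), with the periodic Strichartz estimates there replaced by the $\mathbb{R}$ estimates recalled in Section 2. I would fix $t_0$, choose $\psi \in C^\infty_0(\mathbb{R})$ with $\psi \equiv 1$ on $[0,1]$ and $\operatorname{supp}\psi \subseteq [-1,2]$, write $\psi_\delta(t) := \psi((t-t_0)/\delta)$, and seek $v$ as the fixed point of
\[
\Gamma v(t) := \psi_1(t)\, e^{i(t-t_0)\Delta} u(t_0) \;-\; i\,\psi_\delta(t)\int_{t_0}^{t} e^{i(t-t')\Delta}\,\big(V*|v|^2\big)(t')\,v(t')\,dt'
\]
inside the complete metric space $Y_\delta := \{\, v : \|v\|_{X^{1,\frac{1}{2}+}} \le R_1,\ \|\mathcal{D} v\|_{X^{0,\frac{1}{2}+}} \le R_2 \,\}$, with $R_1 := C_0 \|u(t_0)\|_{H^1}$ and $R_2 := C_0 \|\mathcal{D} u(t_0)\|_{L^2}$ for a large fixed constant $C_0$. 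Conservation of mass and energy for (\ref{eq:Hartree}) bounds $\|u(t_0)\|_{H^1}$ by a continuous function of $E(\Phi)$ and $M(\Phi)$, so membership $v \in Y_\delta$ yields (\ref{eq:properties of v22}) and (\ref{eq:properties of v32}) directly; and since $\psi_\delta \equiv 1$ on $[t_0,t_0+\delta]$, the fixed point satisfies (\ref{eq:Hartree}) with datum $u(t_0)$ there, whence uniqueness in the local theory forces $v = u$ on that interval, i.e. (\ref{eq:properties of v12}).

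The estimates I would invoke are the usual Bourgain-space ones: $\|\psi_1 e^{i(t-t_0)\Delta} f\|_{X^{s,\frac{1}{2}+}} \lesssim \|f\|_{H^s}$, together with, for $0 \le b' < \tfrac{1}{2} < b \le 1-b'$,
\[
\Big\|\, \psi_\delta(t) \int_{t_0}^{t} e^{i(t-t')\Delta} F(t')\, dt' \,\Big\|_{X^{s,b}} \lesssim \delta^{1-b-b'}\,\|F\|_{X^{s,-b'}};
\]
choosing $b = \tfrac{1}{2}+$ and $b' = \tfrac{1}{2}-$ produces a gain $\delta^{0+}$, which is what makes $\Gamma$ a contraction even though the datum $u(t_0)$ need not be small. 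The key nonlinear inputs are the trilinear bounds
\[
\big\|\,(V*|v|^2)\,v\,\big\|_{X^{1,-\frac{1}{2}+}} \lesssim \|v\|_{X^{1,\frac{1}{2}+}}^{3}, \qquad
\big\|\,\mathcal{D}\big((V*|v|^2)\,v\big)\,\big\|_{X^{0,-\frac{1}{2}+}} \lesssim \|\mathcal{D} v\|_{X^{0,\frac{1}{2}+}}\,\|v\|_{X^{1,\frac{1}{2}+}}^{2},
\]
together with their differenced versions needed for the contraction. Here I would use $V \in L^1(\mathbb{R}) \Rightarrow \widehat{V} \in L^\infty$ with $\|\widehat{V}\|_{L^\infty} \le \|V\|_{L^1}$: passing to absolute values of the spacetime Fourier transforms (which leaves $X^{s,b}$ norms unchanged) turns the convolution potential into the harmless constant $\|V\|_{L^1}$ and leaves a purely cubic multilinear expression. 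Then the fractional Leibniz bound $\langle\xi_1\rangle \lesssim \langle\xi_2\rangle + \langle\xi_3\rangle + \langle\xi_4\rangle$ in the first inequality, and the subadditivity (\ref{eq:thetasum}) of $\theta$ in the second, move the weight onto a single factor; bounding $X^{0,-\frac{1}{2}+} \supseteq L^2_{t,x}$ and then estimating $\|w_1 w_2 w_3\|_{L^2_{t,x}}$ by H\"older and the $L^6_{t,x}$ Strichartz estimate (\ref{eq:L6estimate}) (with $\|v\|_{X^{0,\frac{1}{2}+}} \le \|v\|_{X^{1,\frac{1}{2}+}}$ for the remaining factors) closes both estimates.

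With these in hand, $\Gamma$ maps $Y_\delta$ into itself and is a contraction once $C_0$ is chosen large and $\delta$ is chosen small relative to $R_1$ (the cubic terms contribute $\lesssim \delta^{0+} R_1^3$ and $\lesssim \delta^{0+} R_1^2 R_2$ to the two norms, each absorbable into $R_1/2$ and $R_2/2$), so a unique fixed point $v$ exists. Since $C := C_0$ is absolute and $\delta$ is picked as a decreasing continuous function of the upper bound for $\|u(t_0)\|_{H^1}$, which is itself a continuous function of $E(\Phi)$ and $M(\Phi)$, both $\delta$ and $C$ depend continuously on energy and mass. I expect the only genuinely nonstandard point to be the $\mathcal{D}$-weighted trilinear estimate, but even there the subadditivity (\ref{eq:thetasum}) reduces it immediately to the cubic Strichartz estimate; the rest of the argument is structurally identical to the corresponding one in \cite{SoSt1}.
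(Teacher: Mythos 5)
Your proposal is correct and is exactly the contraction-mapping argument in $X^{s,b}$ spaces that the paper invokes by reference to Proposition 3.1 here and to Propositions 3.1 and 4.1 of \cite{SoSt1}, with the periodic Strichartz estimates replaced by the $\mathbb{R}$ ones from Section 2 and the convolution potential handled via $\widehat{V}\in L^\infty$ exactly as you describe. The paper omits the details for this proposition, and your write-up supplies precisely the standard argument it is pointing to, including the use of the subadditivity (\ref{eq:thetasum}) of $\theta$ to move the $\mathcal{D}$ weight onto a single factor.
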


The proof of Proposition \ref{Proposition 4.1} is analogous to the proof of Proposition 3.1. and Proposition 4.1. in \cite{SoSt1}. The point is that all the intermediate estimates that hold in the periodic setting carry over to the non-periodic setting. Since $V \in L^1(\mathbb{R})$, we know that $\widehat{V} \in L^{\infty}(\mathbb{R})$, so one can directly modify the proof for the cubic NLS to the Hartree equation as in \cite{SoSt1}. We omit the details.

\vspace{2mm}

\subsection{An Iteration bound and proof of Theorem \ref{Theorem 2}}

\vspace{2mm}

As in the periodic case, let:

$$E^1(u(t)):=\|\mathcal{D}u(t)\|_{L^2}^2.$$
Then, arguing as in \cite{SoSt1}, we obtain, that for some $c \in \mathbb{R}$:

$$\frac{d}{dt}E^1(u(t))=ci \int_{\xi_1+\xi_2+\xi_3+\xi_4=0}((\theta(\xi_1))^2-(\theta(\xi_2))^2+(\theta(\xi_3))^2-(\theta(\xi_4))^2)$$
\begin{equation}
\label{eq:firstcontribution}
\widehat{V}(\xi_3+\xi_4)\widehat{u}(\xi_1)\widehat{\bar{u}}(\xi_2)\widehat{u}(\xi_3)\widehat{\bar{u}}(\xi_4) d\xi_j.
\end{equation}

\vspace{4mm}

Recalling the notation from the Introduction, as in \cite{SoSt1}, we consider the following \emph{higher modified energy}

\begin{equation}
\label{eq:modifiedenergy}
E^2(u):=E^1(u) + \lambda_4(M_4;u).
\end{equation}

\vspace{3mm}

The quantity $M_4$ will be determined soon.

\vspace{4mm}

The modified energy $E^2$ is obtained by adding a ``multilinear correction'' to the modified energy $E^1$ considered earlier.
In order to find $\frac{d}{dt}E^2(u)$, we need to find $\frac{d}{dt}\lambda_4(M_4;u)$. Thus, if we fix a multiplier $M_4$, we obtain:

$$\frac{d}{dt} \lambda_4(M_4;u)=$$
\vspace{2mm}
$$=-i\lambda_4(M_4(\xi_1^2-\xi_2^2+\xi_3^2-\xi_4^2);u)$$
\vspace{2mm}
$$-i\int_{\xi_1+\xi_2+\xi_3+\xi_4+\xi_5+\xi_6=0} \big[M_4(\xi_{123},\xi_4,\xi_5,\xi_6)\widehat{V}(\xi_1+\xi_2)$$
\vspace{2mm}
$$-M_4(\xi_1,\xi_{234},\xi_5,\xi_6)\widehat{V}(\xi_2+\xi_3)+M_4(\xi_1,\xi_2,\xi_{345},\xi_6)\widehat{V}(\xi_3+\xi_4)$$
\vspace{2mm}
\begin{equation}
\label{eq:secondcontribution}
-M_4(\xi_1,\xi_2,\xi_3,\xi_{456})\widehat{V}(\xi_4+\xi_5)\big]\widehat{u}(\xi_1)\widehat{\bar{u}}(\xi_2)\widehat{u}(\xi_3)
\widehat{\bar{u}}(\xi_4)\widehat{u}(\xi_5)\widehat{\bar{u}}(\xi_6) d \xi_j
\end{equation}

\vspace{2mm}

With the setup $(\ref{eq:firstcontribution})$ and $(\ref{eq:secondcontribution})$, we can use \emph{higher modified energies} as in in the periodic setting. Namely, it follows that if we take:

\begin{equation}
\label{eq:definitionofM4}
M_4:=\Psi.
\end{equation}
where $\Psi$ is defined by:

$$\Psi: \Gamma_4 \rightarrow \mathbb{R}$$

\begin{equation}
\label{eq:definitionofpsi}
\Psi:=
\begin{cases}
  c\frac{(\theta(\xi_1))^2-(\theta(\xi_2))^2+(\theta(\xi_3))^2-(\theta(\xi_4))^2  \widehat{V}(\xi_3+\xi_4)}
  {\xi_1^2-\xi_2^2+\xi_3^2-\xi_4^2},\,   \mbox{if }\xi_1^2-\xi_2^2+\xi_3^2-\xi_4^2 \neq 0 \\
  \,0,\,  \mbox{otherwise. }
\end{cases}
\end{equation}
for an appropriate real constant $c$.
\vspace{4mm}
One then has:

\begin{equation}
\label{eq:incremenentE2}
\frac{d}{dt}E^2(u)=-i \lambda_6(M_6;u).
\end{equation}
\vspace{2mm}
where:

\vspace{2mm}

$$M_6(\xi_1,\xi_2,\xi_3,\xi_4,\xi_5,\xi_6):=
M_4(\xi_{123},\xi_4,\xi_5,\xi_6)\widehat{V}(\xi_1+\xi_2)$$
\vspace{2mm}
$$-M_4(\xi_1,\xi_{234},\xi_5,\xi_6)\widehat{V}(\xi_2+\xi_3)+M_4(\xi_1,\xi_2,\xi_{345},\xi_6)\widehat{V}(\xi_3+\xi_4)$$
\vspace{2mm}
\begin{equation}
\label{eq:M_6_Hartree}
-M_4(\xi_1,\xi_2,\xi_3,\xi_{456})\widehat{V}(\xi_4+\xi_5)
\end{equation}
\vspace{2mm}
The key to continue our study of $E^2(u)$ is to deduce pointwise bounds on $\Psi$.
We dyadically localize the frequencies as $|\xi_j| \sim N_j$.
We then order the $N_j^s$ in decreasing order to obtain:
$N_1^* \geq N_2^* \geq N_3^* \geq N_4^*$.
Let us show that the following result holds:

\begin{proposition}(Pointwise bound on the multiplier)
\label{Proposition 4.2}
Under the previous assumptions, one has:
\begin{equation}
\label{eq:psipointwisebound1}
\mbox{If}\,\,\,\,N_2^*\gg N_3^*,\,\,\Psi=O(\frac{1}{(N_1^*)^2}\theta(N_1^*)\theta(N_2^*)).
\end{equation}
\vspace{2mm}
\begin{equation}
\label{eq:psipointwisebound2}
\mbox{If}\,\,\,\,N_2^* \sim N_3^*,\,\,\Psi=O(\frac{1}{(N_1^*)^3}\theta(N_1^*)\theta(N_2^*)N_3^*N_4^*).
\end{equation}
\end{proposition}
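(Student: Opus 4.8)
The plan is to reduce the estimate for $\Psi$ to a statement about its numerator alone, to factor the denominator on $\Gamma_4$, and then to run a short case distinction in which the numerator is controlled by one or two applications of the mean value theorem. First I would dispose of the convolution potential: since $V\in L^1(\mathbb R)$ one has $\|\widehat V\|_{L^\infty}\le\|V\|_{L^1}\lesssim 1$, so the factor $\widehat V(\xi_3+\xi_4)$ in (\ref{eq:definitionofpsi}) is harmless and it suffices to bound the symmetric quotient
$$\frac{\big|\,g(\xi_1)-g(\xi_2)+g(\xi_3)-g(\xi_4)\,\big|}{\big|\,\xi_1^2-\xi_2^2+\xi_3^2-\xi_4^2\,\big|},\qquad g:=\theta^2.$$
From (\ref{eq:thetaprime}) and (\ref{eq:thetadoubleprime}) we get $|g'(\xi)|\lesssim g(\xi)/|\xi|$ and $|g''(\xi)|\lesssim g(\xi)/|\xi|^2$; moreover $g$ is even, $g\equiv 1$ (so $g'\equiv g''\equiv 0$) on $|\xi|\le N$, $g$ is nondecreasing in $|\xi|$, and $\xi\mapsto g(\xi)/|\xi|$ is, up to a bounded factor, nondecreasing on $[N,\infty)$. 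On $\Gamma_4$ I would use the algebraic identity $\xi_1^2-\xi_2^2+\xi_3^2-\xi_4^2=2(\xi_1+\xi_2)(\xi_1+\xi_4)$ (a consequence of $\xi_1+\xi_2+\xi_3+\xi_4=0$), together with the elementary fact that the two largest of the $|\xi_j|$ are always comparable, so that $\theta(N_1^*)\theta(N_2^*)\sim\theta(N_1^*)^2=g(N_1^*)$. Finally, the displayed quotient is invariant under $\xi_1\leftrightarrow\xi_3$, under $\xi_2\leftrightarrow\xi_4$, and under $(\xi_1,\xi_2,\xi_3,\xi_4)\mapsto(\xi_2,\xi_1,\xi_4,\xi_3)$, so I may assume $|\xi_1|=N_1^*$ and $|\xi_2|\ge|\xi_4|$.

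I would then prove the single uniform estimate $|\Psi|\lesssim g(N_1^*)/(N_1^*)^2$, from which (\ref{eq:psipointwisebound1}) is immediate and (\ref{eq:psipointwisebound2}) follows as well, because when $N_2^*\sim N_3^*$ one has $N_3^*\sim N_1^*$ and $N_4^*\gtrsim 1$, hence $N_3^*N_4^*/(N_1^*)^3\gtrsim 1/(N_1^*)^2$. Writing $a:=|\xi_1+\xi_2|$ and $b:=|\xi_1+\xi_4|$, the denominator equals $2ab$, and I would split into three cases. If $a\gtrsim N_1^*$ and $b\gtrsim N_1^*$, the denominator is $\sim(N_1^*)^2$ and the numerator is trivially $\lesssim g(N_1^*)$. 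If, say, $a\ll N_1^*$ while $b\gtrsim N_1^*$ (so the denominator is $\sim a N_1^*$), then also $|\xi_3+\xi_4|=a$, and grouping the numerator as $[g(\xi_1)-g(\xi_2)]+[g(\xi_3)-g(\xi_4)]$ and applying evenness of $g$, the mean value theorem, the bound $|g'|\lesssim g/|\cdot|$ (which vanishes below $N$), and the monotonicity of $g/|\cdot|$ yields numerator $\lesssim a\,g(N_1^*)/N_1^*$; either way $|\Psi|\lesssim g(N_1^*)/(N_1^*)^2$.

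The remaining case is $a\ll N_1^*$ and $b\ll N_1^*$: then $\xi_2\approx\xi_4\approx-\xi_1$ and $\xi_3\approx\xi_1$, so in particular all four frequencies have size $\sim N_1^*$. Putting $\eta:=\xi_1+\xi_2$, $\mu:=\xi_1+\xi_4$, using $\xi_3=\xi_1-\eta-\mu$ and evenness of $g$, the numerator equals \emph{exactly}
$$g(\xi_3+\eta+\mu)-g(\xi_3+\eta)-g(\xi_3+\mu)+g(\xi_3),$$
which is the left-hand side of (\ref{eq:DoubleMVT}) with $f=g$ and $x=\xi_3$. Since $|\eta|,|\mu|\ll|\xi_3|\sim N_1^*$, Proposition \ref{Proposition 2.5} bounds this by $\lesssim|\eta||\mu|\,|g''(\xi_3)|\lesssim|\eta||\mu|\,g(N_1^*)/(N_1^*)^2$, and dividing by the denominator $2|\eta||\mu|$ again gives $|\Psi|\lesssim g(N_1^*)/(N_1^*)^2$. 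Assembling the three cases proves the uniform bound, hence Proposition \ref{Proposition 4.2}.

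I expect this last, near-resonant case to be the main obstacle: when both denominator factors are small, exploiting only one cancellation in the numerator leaves a bound of the form $g(N_1^*)/\big(N_1^*\max(|\eta|,|\mu|)\big)$, which blows up, so the two successive cancellations packaged in the Double Mean Value Theorem are genuinely needed. Some care will also be required to pair up the nearly-opposite frequencies correctly according to which of $a,b$ is small, to verify the hypothesis $|\eta|,|\mu|\ll|\xi_3|$, and — in the intermediate case where $a$ is small but $b\gtrsim N_1^*$ — to carry out the single cancellation when one frequency of the ``small'' pair lies below the threshold $N$, where $g$ and its derivatives vanish.
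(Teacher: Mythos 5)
Your proposal is correct, and it takes a genuinely different route from the paper, proving a \emph{single uniform bound} $|\Psi|\lesssim \theta(N_1^*)\theta(N_2^*)/(N_1^*)^2$ from which both stated estimates follow. The key structural difference lies in where the Double Mean Value Theorem is invoked. The paper first splits on the size pattern of the $N_j^*$ (whether $N_2^*\gg N_3^*$ or $N_2^*\sim N_3^*$, then which index carries $N_2^*$, etc.) and reserves Proposition \ref{Proposition 2.5} for the ultra-near-resonant sub-subcase where \emph{both} $|\xi_1+\xi_2|\ll 1$ and $|\xi_1+\xi_4|\ll 1$; when one of these is only $\gtrsim 1$, the paper uses a single cancellation via Lemma \ref{Lemma 4.3}, yielding the weaker bound $\theta(N_1^*)\theta(N_2^*)/N_1^*$, which it then rewrites as $\theta(N_1^*)\theta(N_2^*)\,N_3^*N_4^*/(N_1^*)^3$ using $N_3^*N_4^*\sim(N_1^*)^2$ in that regime. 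You instead split on the size of the two factors $a:=|\xi_1+\xi_2|$, $b:=|\xi_1+\xi_4|$ relative to the natural scale $N_1^*$ (rather than relative to $1$), and apply the Double MVT throughout the entire regime $a,b\ll N_1^*$; this yields the decay $(N_1^*)^{-2}$ \emph{uniformly}, which strictly improves the paper's bound in the intermediate window $1\lesssim \min(a,b)\ll N_1^*$. The reduction of (\ref{eq:psipointwisebound2}) to the uniform bound via $N_3^*\sim N_1^*$ and $N_4^*\geq 1$ is sound (dyadic blocks on $\mathbb{R}$ start at $1$), and your handling of the symmetry group of the quotient, the threshold regime where $\theta'$ vanishes, and the algebraic identity forcing the numerator into the exact Double MVT form are all correct. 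What your approach buys is economy (three cases versus the paper's nested sub-subcases) and a strictly stronger conclusion; what the paper's phrasing buys is the explicit $N_3^*N_4^*$ factors, which it carries into the Big Case 2 estimate imported from \cite{SoSt1}, though the uniform bound would serve there just as well.
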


\vspace{2mm}
In the proof of Proposition \ref{Proposition 4.2}, the following bound will be useful:

\begin{lemma}
\label{Lemma 4.3}
Suppose that $|x|\geq |y|$. Then, one has:
$$|(\theta(x))^2-(\theta(y))^2| \lesssim (|x|-|y|)\frac{(\theta(x))^2}{|x|}.$$
\end{lemma}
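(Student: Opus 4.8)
The plan is to prove Lemma \ref{Lemma 4.3} by reducing it to an elementary one-variable estimate for $\theta_0^2$ and then rescaling. Write $\theta(\xi)=\theta_0(\xi/N)$, so that $(\theta(x))^2-(\theta(y))^2 = (\theta_0(x/N))^2-(\theta_0(y/N))^2$; hence it suffices to prove the claim for $N=1$, i.e. for $\theta_0$, since both sides scale the same way under $\xi\mapsto\xi/N$ (the right-hand side involves $(|x|-|y|)(\theta_0(x/N))^2/|x|$, and the factor $N$ cancels). Moreover, since $\theta_0$ is even, we may assume $x,y\geq 0$ with $x\geq y$, and work with the $C^2$ function $g(\xi):=(\theta_0(\xi))^2$ on $[0,\infty)$.

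First I would record the derivative bound for $g$. From $(\ref{eq:theta0prime})$, $|\theta_0'(\xi)|\lesssim |\theta_0(\xi)|/|\xi|$, so $|g'(\xi)| = 2|\theta_0(\xi)||\theta_0'(\xi)| \lesssim (\theta_0(\xi))^2/|\xi| = g(\xi)/|\xi|$ for $\xi\neq 0$; near $\xi=0$, $g$ is smooth and $g'(0)=0$, so there is no issue there. Then by the Mean Value Theorem there is $c\in[y,x]$ with
$$|g(x)-g(y)| = |g'(c)|\,(x-y) \lesssim \frac{g(c)}{c}(x-y).$$
The remaining point is to replace $g(c)/c$ by $g(x)/x = (\theta_0(x))^2/|x|$, using $c\le x$.

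The step I expect to require the most care is exactly this last replacement: showing $g(c)/c \lesssim g(x)/x$ for $y\le c\le x$. This follows because $\xi\mapsto g(\xi)/\xi$ — equivalently $\xi\mapsto (\theta_0(\xi))^2/\xi$ — is, up to constants, nondecreasing on $(0,\infty)$: for $|\xi|\ge 2$ it equals $|\xi|^{2s-1}$ with $2s-1>1$, which is increasing; on the bounded transition region $[0,2]$ the function $\theta_0$ is bounded above and below by positive constants and $\xi\le 2$, so $g(\xi)/\xi$ is comparable to $1/\xi$ and one only needs to compare it with its value at $\xi=2$ and patch the two regimes with an absolute constant. Alternatively, and perhaps more cleanly, one can bound $g(c)\lesssim g(x)$ directly (since $\theta_0$ is nondecreasing on $[0,\infty)$, $g(c)\le g(x)$ for $c\le x$) and, if $c\ge x/2$, then $1/c\le 2/x$ and we are done, while if $c<x/2$ one has $x-y\le x\le 2(x-c)$ and can instead estimate $|g(x)-g(y)|\le |g(x)-g(c)|+\ldots$; in practice the monotonicity-of-$g/\xi$ argument is the shortest. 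Having established $g(c)/c \lesssim g(x)/x$, we conclude
$$|(\theta(x))^2-(\theta(y))^2| \lesssim (|x|-|y|)\,\frac{(\theta(x))^2}{|x|},$$
which is the assertion of the Lemma after undoing the rescaling. $\qed$
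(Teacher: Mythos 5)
Your overall strategy -- rescale to $N=1$, apply the Mean Value Theorem to $g=\theta_0^2$, bound $|g'|\lesssim g(\xi)/|\xi|$ via $(\ref{eq:theta0prime})$, then compare $g(\xi)/\xi$ at the mean-value point $c$ with its value at $x$ -- is the same as the paper's; the paper just phrases the comparison as a supremum over a subinterval and spells it out in five explicit cases. However, the step you flag as delicate is in fact wrong as stated. The claim that $\xi\mapsto g(\xi)/\xi$ is nondecreasing up to constants on all of $(0,\infty)$ fails: on $(0,1]$ one has $\theta_0\equiv 1$, so $g(\xi)/\xi=1/\xi$, which is \emph{decreasing} and unbounded as $\xi\to 0^+$. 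Your own description concedes this ($g(\xi)/\xi$ ``is comparable to $1/\xi$'' on $[0,2]$), yet the conclusion you draw from it is not correct: if the MVT point satisfied $c=0.01$ while $x=1.5$, then $g(c)/c=100$ but $g(x)/x\approx 1$, and no uniform constant makes $g(c)/c\lesssim g(x)/x$ true. The ``patch at $\xi=2$'' phrase does not rescue this, precisely because $1/\xi$ is unbounded near the origin; and the alternative you offer ($c<x/2$, split $|g(x)-g(y)|\le|g(x)-g(c)|+\cdots$) is left incomplete and would face the same issue on the second term.

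The gap is genuine but also genuinely fixable, and it is instructive to see that the paper's case distinction is doing exactly this. Since $g$ is \emph{constant} on $[0,1]$, one has $g'\equiv 0$ there; so either $g(x)=g(y)$ (the inequality is trivial, which is the paper's Case 3) or the mean-value point $c$ can be taken in $[1,x]$. On $[1,\infty)$ the monotonicity-up-to-constants of $g(\xi)/\xi$ does hold: it is bounded above and below by positive constants on the compact set $[1,2]$, equals $\xi^{2s-1}$ for $\xi\ge 2$ with $2s-1>0$, and the two regimes match at $\xi=2$. With that restriction, your chain $|g(x)-g(y)|=|g'(c)|(x-y)\lesssim (g(c)/c)(x-y)\lesssim (g(x)/x)(x-y)$ is correct. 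Equivalently one can replace $y$ by $\max(|y|,1)$ at the start, which only decreases $|x|-|y|$; this is what the paper's Cases 4 and 5 carry out. You should make one of these reductions explicit before invoking the monotonicity comparison, and correct the statement of the monotonicity claim to be on $[1,\infty)$ (equivalently $[N,\infty)$ before rescaling), not on $(0,\infty)$.
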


We prove Proposition \ref{Proposition 4.2} and Lemma \ref{Lemma 4.3} in Appendix B.

\vspace{3mm}

Using Proposition \ref{Proposition 4.2} and arguing as in \cite{SoSt1}, we deduce that, whenever $u$ is a global solution of $(\ref{eq:Hartree})$, one has:

\begin{equation}
\label{eq:E1E2equivalence}
E^2(u) \sim E^1(u).
\end{equation}
\vspace{3mm}
Arguing as in \cite{SoSt1}, the key is to deduce the following bound:

\begin{lemma}
\label{Bigstar}
For all $t_0 \in \mathbb{R}$, one has:
$$|E^2(u(t_0+\delta))-E^2(u(t_0))|\lesssim \frac{1}{N^{3-}}E^2(u(t_0)).$$
\end{lemma}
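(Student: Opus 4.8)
The plan is to estimate the time integral $E^2(u(t_0+\delta)) - E^2(u(t_0)) = \int_{t_0}^{t_0+\delta} \frac{d}{dt}E^2(u(t))\,dt$ using the identity $(\ref{eq:incremenentE2})$, which expresses $\frac{d}{dt}E^2(u)$ as a six-linear functional $-i\lambda_6(M_6;u)$. As in the proof of Proposition \ref{Proposition 3.4}, the first move is to replace $u$ on the interval $[t_0,t_0+\delta]$ by the function $v$ furnished by Proposition \ref{Proposition 4.1}, so that $v$ agrees with $u$ on $[t_0,t_0+\delta]$ and satisfies the global bounds $\|v\|_{X^{1,\frac12+}} \lesssim 1$ and $\|\mathcal{D}v\|_{X^{0,\frac12+}} \lesssim \|\mathcal{D}u(t_0)\|_{L^2}$. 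Inserting the sharp cutoff $\chi = \chi_{[t_0,t_0+\delta]}(t)$ and decomposing each factor into Littlewood–Paley pieces $v_{N_j}$, the task reduces to bounding, summed over dyadic $N_1,\ldots,N_6$, the spacetime integral of $\chi(t)$ times a product of six frequency-localized factors, weighted by the multiplier $M_6$ given in $(\ref{eq:M_6_Hartree})$.

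Next I would insert the pointwise bounds on $M_4 = \Psi$ from Proposition \ref{Proposition 4.2} into the expression $(\ref{eq:M_6_Hartree})$ for $M_6$; since $\widehat V \in L^\infty$ (because $V \in L^1$), each of the four terms defining $M_6$ is controlled by the corresponding bound on $\Psi$ evaluated at the appropriate triple-sum frequency. One then orders the six frequencies and observes, using $\xi_1 + \cdots + \xi_6 = 0$ and the structure of $M_6$, that the multiplier carries roughly two powers of the largest frequency in the denominator, i.e. a gain of $1/(N^*)^{2-}$ relative to the "$\theta$ weights" needed to reconstruct two copies of $\mathcal{D}v$. To convert the remaining one power of smoothing-room into the claimed $1/N^{3-}$, I would pair off the frequency-localized factors and apply the improved Strichartz estimate with rough cutoff, Proposition \ref{chiImprovedStrichartz}, to the product of a high-frequency factor with a genuinely-lower-frequency factor, gaining $1/N_1^{*\,\frac12-}$; the remaining factors are estimated by the $L^6_{t,x}$ Strichartz estimate $(\ref{eq:L6estimate})$ and the $X^{s,b}$-localization Lemma \ref{Lemma 2.2} to absorb the cutoff $\chi$. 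Combining the multiplier gain with the Strichartz gain yields the decay factor $1/N^{3-}$, while the two $\mathcal{D}v$ factors and the $X^{1,\frac12+}$ factors reassemble, via Proposition \ref{Proposition 4.1} and $(\ref{eq:E1E2equivalence})$, into $E^2(u(t_0))$.

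The main obstacle, as always in this scheme, is the case analysis required to make the frequency pairing work in the presence of six factors rather than four: when the two largest frequencies are comparable and much larger than the rest, one must check that $M_6$ still supplies enough decay (this is where the $N_2^* \sim N_3^*$ branch $(\ref{eq:psipointwisebound2})$ of Proposition \ref{Proposition 4.2} is used), and one must verify that in every subcase there is a high-low pairing to which Proposition \ref{chiImprovedStrichartz} applies with $N_1 \gtrsim N$. A secondary technical point is the summation over the six dyadic parameters: one needs the exponent of the largest frequency appearing in the denominator to strictly exceed what is consumed by the $\theta$-weights, so that after extracting $1/N^{3-}$ the residual dyadic sums converge; the "$3-$" rather than "$3$" is precisely the slack absorbing these sums together with the $\epsilon$-losses from Lemma \ref{Lemma 2.2} and Corollary \ref{Corollary 2.4}. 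Since this is carried out "as in \cite{SoSt1}" with the improved Strichartz input replacing the periodic estimates, I would structure the argument to mirror the cubic NLS computation in Proposition \ref{Proposition 3.4}, highlighting only the points where the extra bilinear factor $\widehat V(\xi_3+\xi_4)$ and the six-linearity change the bookkeeping, and defer the routine multiplier estimates to Appendix B.
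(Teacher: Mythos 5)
Your roadmap has the right shape — write $E^2(u(t_0+\delta))-E^2(u(t_0))$ as the time integral of $-i\lambda_6(M_6;u)$, replace $u$ by the local extension $v$, insert the cutoff $\chi$, dyadically localize, invoke the pointwise bounds on $\Psi$, and then feed the localized pieces through the improved Strichartz estimates. But your decay accounting is short by a factor of $N^{-1/2}$, and you miss a structural simplification that the paper uses to tame the six-linearity.

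On the accounting: you correctly extract $1/(N_1^*)^{2-}$ from the multiplier after reconstructing two copies of $\mathcal{D}v$, and you propose to gain the rest by one application of Proposition \ref{chiImprovedStrichartz} (giving $N^{-\frac12+}$), estimating the leftover factors in $L^6_{t,x}$. That only gives a total of $N^{-\frac52+}$, not $N^{-3+}$. (Also note that $L^2_{t,x}\times (L^6_{t,x})^4$ is not a valid H\"older pairing for a six-fold product — the exponents sum to $\tfrac12+\tfrac46>1$.) The paper instead performs \emph{two} high--low bilinear pairings among the six factors, one using Proposition \ref{chiImprovedStrichartz} to handle the factor carrying $\chi$ and one using Proposition \ref{Proposition 2.3} directly, each yielding $N^{-\frac12+}$; the remaining two factors are put in $L^\infty_{t,x}$ via $(\ref{eq:Linftyestimate})$. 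The resulting H\"older pattern $L^2_{t,x}\cdot L^2_{t,x}\cdot L^\infty_{t,x}\cdot L^\infty_{t,x}$ is valid and supplies the full extra $N^{-1+}$ needed. (In some subcases the second $N^{-\frac12}$ comes from a Bernstein-type bound $\|v_{\gtrsim N_1}\|_{X^{0,\frac38+}}\lesssim N_1^{-1}\|v\|_{X^{1,\frac12+}}$ rather than from a second bilinear pairing, but in every subcase there is a second $N$-gain that your scheme does not produce.)

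On the structure: the paper does not carry six dyadic parameters. It first observes that in each term of $(\ref{eq:M_6_Hartree})$ the quantity $M_4(\xi_{123},\xi_4,\xi_5,\xi_6)\widehat V(\xi_1+\xi_2)$ depends on $\xi_1,\xi_2,\xi_3$ only through $\xi_{123}$ (for $M_4$) and through $\xi_1+\xi_2$ (for $\widehat V$), and uses $\widehat V\in L^\infty$ to collapse $\widehat V(\xi_1+\xi_2)\widehat v(\xi_1)\widehat{\bar v}(\xi_2)\widehat v(\xi_3)$ into $((V*|v|^2)v)\,\widetilde{}\,$ evaluated at $\xi_{123}$, turning the six-linear integral into a four-linear one over $\Gamma_4$. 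This is not mere bookkeeping: it reduces the dyadic decomposition to four parameters $N_1,\dots,N_4$, ensures that the pointwise bounds $(\ref{eq:psipointwisebound1})$--$(\ref{eq:psipointwisebound2})$ apply directly to the four collapsed frequencies (rather than at some triple-sum inside each of four $M_4$ evaluations), and keeps the summability analysis manageable. Only \emph{after} localizing the collapsed factor at scale $N_1$ does the proof open it back up into three sub-factors, constrained by $|\xi_1+\xi_2+\xi_3|\sim N_1$, and distribute $\theta$ via $(\ref{eq:thetasum})$ to place one $\mathcal D$ on a sub-factor. Your plan of decomposing all six frequencies independently can be made to work in principle, but as written it does not secure either the correct $N$-decay or a usable H\"older pattern, so the argument would not close.
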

\vspace{3mm}
We see that Theorem \ref{Theorem 2} follows from Lemma \ref{Bigstar}:

\begin{proof}(of Theorem \ref{Theorem 2} assuming Lemma \ref{Bigstar})

By Lemma \ref{Bigstar}, there exists $C>0$ such that for all $t_0 \in \mathbb{R}$, one has:

\begin{equation}
\label{eq:bigstar}
E^2(u(t_0+\delta))\leq (1+\frac{C}{N^{3-}})E^2(u(t_0))
\end{equation}
Using (\ref{eq:bigstar}) iteratively, we obtain that \footnote{
Strictly speaking, we are using $(\ref{eq:properties of v32})$ to deduce that we can get the bound for all such times, and not just those which are a multiple of $\delta$.} $\forall T>1:$ \,

$$E^2(u(T)) \leq (1+\frac{C}{N^{3-}})^{\lceil \frac{T}{\delta} \rceil} E^2(\Phi) $$
Let us take:

\begin{equation}
\label{eq:Iteration2}
T \sim N^{3-}
\end{equation}
For such a choice of $T$, one has:

\begin{equation}
\label{eq:Iteration3}
E^2(u(T))\lesssim E^2(\Phi)
\end{equation}
Using $(\ref{eq:Doperatorbound})$, and $(\ref{eq:E1E2equivalence})$, it follows that:

$$\|u(T)\|_{H^s} \lesssim N^s E^2(u(T)) \lesssim N^s E^2(\Phi) \lesssim N^s \|\Phi\|_{H^s} $$
\vspace{1mm}
\begin{equation}
\label{eq:QB1}
\lesssim T^{\frac{s}{3}+} \|\Phi\|_{H^s}\lesssim (1+T)^{\frac{s}{3}+} \|\Phi\|_{H^s}.
\end{equation}
Since for times $t\in [0,1]$, we get the bound of Theorem \ref{Theorem 2} just by iterating the local well-posedness construction, the claim for these times follows immediately. Combining this observation, (\ref{eq:QB1}), recalling the approximation result, and using time-reversibility, Theorem \ref{Theorem 2} follows.
\end{proof}

We now prove Lemma \ref{Bigstar}.

\begin{proof}

Let us WLOG consider $t_0=0$. The general case follows analogously. By $(\ref{eq:incremenentE2})$, we write:

$$E^2(u(\delta))-E^2(u(0))= \int_0^{\delta} \frac{d}{dt} E^2(u(t)) dt = - i \int_0^{\delta} \lambda_6(M_6;u) dt$$

\vspace{3mm}

We recall $(\ref{eq:M_6_Hartree})$, and we use symmetry to deduce that it suffices to bound:

\vspace{2mm}

$$\int_0^{\delta} \int_{\xi_1+\cdots+\xi_6=0} M_4(\xi_{123},\xi_4,\xi_5,\xi_6) \widehat{V}(\xi_1+\xi_2)
\widehat{u}(\xi_1) \widehat{\bar{u}}(\xi_2) \widehat{u}(\xi_3) \widehat{\bar{u}}(\xi_4) \widehat{u}(\xi_5)
\widehat{\bar{u}}(\xi_6) d\xi_j dt$$
\vspace{2mm}
Let $v$ be as in Proposition \ref{Proposition 4.1}, and let $\chi=\chi(t)=\chi_{[0,\delta]}(t)$.
The above expression is then equal to:

\vspace{3mm}

$$\int_0^{\delta} \int_{\xi_1+\cdots+\xi_6=0} M_4(\xi_{123},\xi_4,\xi_5,\xi_6) \widehat{V}(\xi_1+\xi_2)
\widehat{v}(\xi_1) \widehat{\bar{v}}(\xi_2) \widehat{v}(\xi_3) \widehat{\bar{v}}(\xi_4) \widehat{v}(\xi_5)
(\chi \bar{v})\,\widehat{}\,(\xi_6) d\xi_j dt=$$
\vspace{2mm}
$$=\int_{\tau_1+\cdots+\tau_4=0} \int_{\xi_1+\cdots+\xi_4=0} M_4(\xi_1,\xi_2,\xi_3,\xi_4)$$
\vspace{2mm}
$$((V*|v|^2)v)\,\widetilde{}\,(\xi_1,\tau_1)\widetilde{\bar{v}}(\xi_2,\tau_2)
\widetilde{v}(\xi_3,\tau_3) (\chi\bar{v})\,\widetilde{}\,(\xi_4,\tau_4) d\xi_j d\tau_j$$
\vspace{3mm}
Let $N_j,j=1,\ldots 4$, be dyadic integers. We define:

$$I_{N_1,N_2,N_3,N_4}:= \int_{\tau_1+\cdots+\tau_4=0} \int_{\xi_1+\cdots+\xi_4=0} M_4(\xi_1,\xi_2,\xi_3,\xi_4)$$
\vspace{2mm}
$$((V*|v|^2)v)\,\widetilde{}_{N_1}\,(\xi_1,\tau_1)\widetilde{\bar{v}}_{N_2}(\xi_2,\tau_2)
\widetilde{v}_{N_3}(\xi_3,\tau_3) (\chi\bar{v})\,\widetilde{}_{N_4}\,(\xi_4,\tau_4) d\xi_j d\tau_j.$$
\vspace{2mm}
We want to bound $I_{N_1,N_2,N_3,N_4}.$
\vspace{3mm}
Let us define by $N_j^*$ the appropriate reordering of the $N_j$. We know:

\begin{equation}
\label{eq:NjstarE2}
N_1^* \sim N_2^*,\,N_1^* \gtrsim N.
\end{equation}
\vspace{2mm}
We have to consider two Big Cases:

\vspace{3mm}

\textbf{Big Case 1:} $N_2^* \gg N_3^*$.

\vspace{2mm}

\textbf{Big Case 2:} $N_2^* \sim N_3^*$.

\vspace{3mm}

\textbf{Big Case 1:}

\vspace{2mm}

From Proposition \ref{Proposition 4.2}, in this Big Case, we have the bound:

\begin{equation}
\label{eq:M4BigCase1}
M_4=O(\frac{1}{(N_1^*)^2}\theta(N_1^*)\theta(N_2^*))
\end{equation}

We consider several Cases:

\vspace{2mm}

\textbf{Case 1:} $N_1^*\sim N_1$ (and hence $N_2^*\sim N_1$).

\vspace{2mm}
Let us assume WLOG that:

$$N_2^* \sim N_2, N_3^* \sim N_3, N_4^* \sim N_4.$$
The other cases are analogous.

\vspace{2mm}

By using $(\ref{eq:VhatHartree})$, we deduce:

$$|I_{N_1,N_2,N_3,N_4}| \leq \int_{\tau_1+\cdots+\tau_6=0} \int_{\xi_1+\cdots+\xi_6=0, |\xi_1+\xi_2+\xi_3|\sim N_1} \frac{1}{(N_1^*)^2} \theta(\xi_1+\xi_2+\xi_3) \theta(N_2^*)$$
\vspace{2mm}
$$|\widetilde{v}(\xi_1,\tau_1)||\widetilde{\bar{v}}(\xi_2,\tau_2)||\widetilde{v}(\xi_3,\tau_3)|
|\widetilde{\bar{v}}_{N_2}(\xi_4,\tau_4)|
|\widetilde{v}_{N_3}(\xi_5,\tau_5)||(\chi\bar{v})\,\widetilde{}_{N_4}\,(\xi_6,\tau_6)| d\xi_j d\tau_j.$$
\vspace{2mm}
From $(\ref{eq:thetasum})$, we know that:

$$\theta(\xi_1+\xi_2+\xi_3)\lesssim \theta(\xi_1)+\theta(\xi_2)+\theta(\xi_3).$$
\vspace{2mm}
By symmetry, we need to bound:

$$I^1_{N_1,N_2,N_3,N_4}:=\int_{\tau_1+\cdots+\tau_6=0} \int_{\xi_1+\cdots+\xi_6=0, |\xi_1+\xi_2+\xi_3|\sim N_1} \frac{1}{(N_1^*)^2} \theta(\xi_1) \theta(N_2^*)$$
\vspace{2mm}
$$|\widetilde{v}(\xi_1,\tau_1)||\widetilde{\bar{v}}(\xi_2,\tau_2)||\widetilde{v}(\xi_3,\tau_3)|
|\widetilde{\bar{v}}_{N_2}(\xi_4,\tau_4)|
|\widetilde{v}_{N_3}(\xi_5,\tau_5)||(\chi\bar{v})\,\widetilde{}_{N_4}\,(\xi_6,\tau_6)| d\xi_j d\tau_j \lesssim$$
\vspace{2mm}
$$\int_{\tau_1+\cdots+\tau_6=0} \int_{\xi_1+\cdots+\xi_6=0, |\xi_1+\xi_2+\xi_3|\sim N_1} \frac{1}{(N_1^*)^2}
|(\mathcal{D}v)\,\widetilde{}\,(\xi_1,\tau_1)|$$
\vspace{2mm}
$$|\widetilde{\bar{v}}(\xi_2,\tau_2)||\widetilde{v}(\xi_3,\tau_3)|
|(\mathcal{D}\bar{v})\,\widetilde{}_{N_2}\,(\xi_4,\tau_4)|
|\widetilde{v}_{N_3}(\xi_5,\tau_5)||(\chi\bar{v})\,\widetilde{}_{N_4}\,(\xi_6,\tau_6)| d\xi_j d\tau_j.$$
\vspace{2mm}
Now, $|\xi_1+\xi_2+\xi_3| \sim N_1$, hence:

$$\max\{|\xi_1|,|\xi_2|,|\xi_3|\} \gtrsim N_1.$$
We have to consider several subcases:

\vspace{3mm}

\textbf{Subcase 1:} $|\xi_1| \gtrsim N_1.$

\vspace{2mm}

The contribution to $I^1_{N_1,N_2,N_3,N_4}$ in this subcase is:

$$\lesssim \int_{\tau_1+\cdots+\tau_6=0} \int_{\xi_1+\cdots+\xi_6=0} \frac{1}{(N_1^*)^2}
\big(|(\mathcal{D}v)\,\widetilde{}\,_{\gtrsim N_1}(\xi_1,\tau_1)||\widetilde{v}_{N_3}(\xi_5,\tau_5)|\big)$$
\vspace{2mm}
$$\big(|(\mathcal{D}\bar{v})\,\widetilde{}_{N_2}\,(\xi_4,\tau_4)|
|(\chi\bar{v})\,\widetilde{}_{N_4}\,(\xi_6,\tau_6)|\big)
|\widetilde{\bar{v}}(\xi_2,\tau_2)||\widetilde{v}(\xi_3,\tau_3)|
d\xi_j d\tau_j=$$
\vspace{2mm}
$$=\int_{\mathbb{R}} \int_{\mathbb{R}} \frac{1}{(N_1^*)^2} F_1 F_2 \overline{F_3} \overline{F_4}
\overline{F_5} F_6 dx dt$$
\vspace{3mm}
For the last equality, we used Parseval's identity for the functions $F_j$, which are chosen to satisfy:

$$\widetilde{F_1}=|(\mathcal{D}v)\,\widetilde{}_{\gtrsim N_1}|,
\widetilde{F_2}=|\widetilde{v}_{N_3}|,\,\widetilde{F_3}=|(\mathcal{D}v)\,\widetilde{}_{N_2}|,
\widetilde{F_4}=|(\chi v)\,\widetilde{}_{N_4}|,\widetilde{F_5}=|\widetilde{v}|,\widetilde{F_6}=|\widetilde{v}|.$$
We now use an $L^2_{t,x}, L^2_{t,x}, L^{\infty}_{t,x}, L^{\infty}_{t,x}$ H\"{o}lder inequality, Proposition \ref{Proposition 2.3}, Proposition \ref{chiImprovedStrichartz}, and $(\ref{eq:Linftyestimate})$ to see that this expression is:

$$\lesssim \frac{1}{(N_1^*)^2} \big(\frac{1}{N_1^{\frac{1}{2}}} \|\mathcal{D}v\|_{X^{0,\frac{1}{2}+}} \|v\|_{X^{0,\frac{1}{2}+}}\big)\big(\frac{1}{N_2^{\frac{1}{2}-}} \|\mathcal{D}v\|_{X^{0,\frac{1}{2}+}} \|v\|_{X^{0,\frac{1}{2}+}}\big) \|v\|_{X^{\frac{1}{2}+,\frac{1}{2}+}}^2$$
\vspace{2mm}
$$\lesssim \frac{1}{(N_1^*)^{3-}} \|\mathcal{D}v\|_{X^{0,\frac{1}{2}+}}^2$$
\vspace{2mm}
\begin{equation}
\label{eq:BigCase1Case1Subcase1}
\lesssim \frac{1}{(N_1^*)^{3-}} E^2(u(0)).
\end{equation}

\vspace{3mm}

\textbf{Subcase 2:} $|\xi_2| \gtrsim N_1.$

\vspace{2mm}

The contribution to $I^1_{N_1,N_2,N_3,N_4}$ in this subcase is:

$$\lesssim \int_{\tau_1+\cdots+\tau_6=0} \int_{\xi_1+\cdots+\xi_6=0} \frac{1}{(N_1^*)^2}
|(\mathcal{D}v)\,\widetilde{}\,(\xi_1,\tau_1)||(\bar{v})\,\widetilde{}\,_{\gtrsim N_1}(\xi_2,\tau_2)|$$
\vspace{2mm}
$$\big(|(\mathcal{D}\bar{v})\,\widetilde{}_{N_2}\,(\xi_4,\tau_4)|
|(\chi\bar{v})\,\widetilde{}_{N_4}\,(\xi_6,\tau_6)|\big)
|\widetilde{v}(\xi_3,\tau_3)||\widetilde{v}_{N_3}(\xi_5,\tau_5)|
d\xi_j d\tau_j$$
\vspace{4mm}

We argue similarly as in the previous Subcase, but we now use an $L^4_{t,x},L^4_{t,x},L^2_{t,x},L^{\infty}_{t,x},L^{\infty}_{t,x}$ H\"{o}lder inequality and Proposition \ref{chiImprovedStrichartz} to deduce that the previous expression is:

$$\lesssim \frac{1}{(N_1^*)^2} \|\mathcal{D}v\|_{X^{0,\frac{3}{8}+}} \|v_{\gtrsim N_1}\|_{X^{0,\frac{3}{8}+}}
\big(\frac{1}{N_2^{\frac{1}{2}-}}\|\mathcal{D}v\|_{X^{0,\frac{1}{2}+}}\|v\|_{X^{0,\frac{1}{2}+}}\big)
\|v\|_{X^{\frac{1}{2}+,\frac{1}{2}+}}^2$$
\vspace{2mm}
$$\lesssim \frac{1}{(N_1^*)^{\frac{7}{2}-}}\|\mathcal{D}v\|_{X^{0,\frac{1}{2}+}}^2\|v\|_{X^{1,\frac{1}{2}+}}^4$$
\vspace{2mm}

\begin{equation}
\label{eq:BigCase1Case1Subcase2}
\lesssim \frac{1}{(N_1^*)^{\frac{7}{2}-}}E^2(u(0)).
\end{equation}

\vspace{2mm}

(We note that we used the fact that $\|v_{\gtrsim N_1}\|_{X^{0,\frac{3}{8}+}}\lesssim \frac{1}{N_1}\|v\|_{X^{1,\frac{1}{2}+}}.$)

\vspace{3mm}

\textbf{Subcase 3:} $|\xi_3|\gtrsim N_1$.

\vspace{2mm}

Subcase 3 is analogous to Subcase 2, and we get the same bound on the wanted contribution.

\vspace{3mm}

\textbf{Case 2:} $N_3^* \sim N_1$ or $N_4^* \sim N_1$.
\vspace{2mm}
Let us WLOG consider the case $N_3^* \sim N_1$. (the case $N_4^* \sim N_1$ is analogous)
\vspace{2mm}
Let us also WLOG suppose:

$$N_1^* \sim N_2,\, N_2^* \sim N_3,\, N_4^* \sim N_4.$$
\vspace{2mm}
Arguing similarly as earlier, we want to estimate:

$$\int_{\tau_1+\cdots+\tau_6=0} \int_{\xi_1+\cdots+\xi_6=0, |\xi_1+\xi_2+\xi_3|\sim N_1} \frac{1}{(N_1^*)^2} \theta(N_2) \theta(N_3)$$
\vspace{2mm}
$$|\widetilde{v}(\xi_1,\tau_1)||\widetilde{\bar{v}}(\xi_2,\tau_2)||\widetilde{v}(\xi_3,\tau_3)|
|\widetilde{\bar{v}}_{N_2}(\xi_4,\tau_4)|
|\widetilde{v}_{N_3}(\xi_5,\tau_5)||(\chi\bar{v})\,\widetilde{}_{N_4}\,(\xi_6,\tau_6)| d\xi_j d\tau_j$$
\vspace{2mm}
We write:

$$v=v_{\ll (N_1^*)^{\frac{1}{2}}}+v_{\gtrsim (N_1^*)^{\frac{1}{2}}}.$$
\vspace{2mm}
We consider the following subcases:

\vspace{3mm}

\textbf{Subcase 1:} $|\xi_1|,|\xi_2|,|\xi_3| \ll (N_1^*)^{\frac{1}{2}}.$

\vspace{2mm}

We have to estimate:

$$\int_{\tau_1+\cdots+\tau_6=0} \int_{\xi_1+\cdots+\xi_6=0, |\xi_1+\xi_2+\xi_3|\sim N_1} \frac{1}{(N_1^*)^2} \theta(N_2) \theta(N_3)$$
\vspace{2mm}
$$|\widetilde{v}_{\ll (N_1^*)^{\frac{1}{2}}}(\xi_1,\tau_1)||\widetilde{\bar{v}}_{\ll (N_1^*)^{\frac{1}{2}}}(\xi_2,\tau_2)||\widetilde{v}_{\ll (N_1^*)^{\frac{1}{2}}}(\xi_3,\tau_3)|
|\widetilde{\bar{v}}_{N_2}(\xi_4,\tau_4)|
|\widetilde{v}_{N_3}(\xi_5,\tau_5)||(\chi\bar{v})\,\widetilde{}_{N_4}\,(\xi_6,\tau_6)| d\xi_j d\tau_j$$
\vspace{2mm}
$$\lesssim \int_{\tau_1+\cdots+\tau_6=0} \int_{\xi_1+\cdots+\xi_6=0, |\xi_1+\xi_2+\xi_3|\sim N_1} \frac{1}{(N_1^*)^2}
(|\widetilde{v}_{\ll (N_1^*)^{\frac{1}{2}}}(\xi_1,\tau_1)||(\mathcal{D}\bar{v})\,\widetilde{}\,_{N_2}(\xi_4,\tau_4)|)$$
\vspace{2mm}
$$(|(\mathcal{D}v)\,\widetilde{}\,_{N_3}(\xi_5,\tau_5)||(\chi\bar{v})\,\widetilde{}_{N_4}\,(\xi_6,\tau_6)|)
|\widetilde{v}_{\ll (N_1^*)^{\frac{1}{2}}}(\xi_3,\tau_3)||\widetilde{v}_{\ll (N_1^*)^{\frac{1}{2}}}(\xi_2,\tau_2)|
d\xi_j d\tau_j$$
\vspace{3mm}

We apply an $L^2_{t,x},L^2_{t,x},L^{\infty}_{t,x},L^{\infty}_{t,x}$ H\"{o}lder inequality, Proposition \ref{Proposition 2.3}, and Proposition \ref{chiImprovedStrichartz} to deduce that the above expression is:

$$\lesssim \frac{1}{(N_1^*)^2} \big(\frac{1}{N_2^{\frac{1}{2}}}\|v\|_{X^{0,\frac{1}{2}+}}\|\mathcal{D}v\|_{X^{0,\frac{1}{2}+}}\big)
\big(\frac{1}{N_3^{\frac{1}{2}-}}\|\mathcal{D}v\|_{X^{0,\frac{1}{2}+}}\|v\|_{X^{0,\frac{1}{2}+}}\big)
\|v\|_{X^{\frac{1}{2}+,\frac{1}{2}+}}^2$$
\vspace{2mm}
$$\lesssim \frac{1}{(N_1^*)^{3-}}\|\mathcal{D}v\|_{X^{0,\frac{1}{2}+}}^2\|v\|_{X^{1,\frac{1}{2}+}}^4$$
\vspace{2mm}
\begin{equation}
\label{eq:BigCase1Case2Subcase1}
\lesssim \frac{1}{(N_1^*)^{3-}}E^2(u(0))
\end{equation}

\vspace{3mm}

\textbf{Subcase 2:} $\max\{|\xi_1|,|\xi_2|,|\xi_3|\} \gtrsim (N_1^*)^{\frac{1}{2}}.$

\vspace{2mm}

We consider WLOG when $|\xi_1| \gtrsim (N_1^*)^{\frac{1}{2}}$. The other two cases are analogous. Hence, we have to estimate:

$$\int_{\tau_1+\cdots+\tau_6=0} \int_{\xi_1+\cdots+\xi_6=0, |\xi_1+\xi_2+\xi_3|\sim N_1} \frac{1}{(N_1^*)^2} \theta(N_2) \theta(N_3)$$
\vspace{2mm}
$$|\widetilde{v}_{\gtrsim (N_1^*)^{\frac{1}{2}}}(\xi_1,\tau_1)||\widetilde{\bar{v}}(\xi_2,\tau_2)||\widetilde{v}(\xi_3,\tau_3)|
|\widetilde{\bar{v}}_{N_2}(\xi_4,\tau_4)|
|\widetilde{v}_{N_3}(\xi_5,\tau_5)||(\chi\bar{v})\,\widetilde{}_{N_4}\,(\xi_6,\tau_6)| d\xi_j d\tau_j$$
\vspace{2mm}
$$\lesssim \int_{\tau_1+\cdots+\tau_6=0} \int_{\xi_1+\cdots+\xi_6=0, |\xi_1+\xi_2+\xi_3|\sim N_1} \frac{1}{(N_1^*)^2}
|\widetilde{v}_{\gtrsim (N_1^*)^{\frac{1}{2}}}(\xi_1,\tau_1)||(\mathcal{D}\bar{v})\,\widetilde{}\,_{N_2}(\xi_4,\tau_4)|$$
\vspace{2mm}
$$\big(|(\mathcal{D}v)\,\widetilde{}\,_{N_3}(\xi_5,\tau_5)||(\chi\bar{v})\,\widetilde{}_{N_4}\,(\xi_6,\tau_6)|\big)
|\widetilde{\bar{v}}(\xi_2,\tau_2)||\widetilde{v}(\xi_3,\tau_3)| d\xi_j d\tau_j$$
\vspace{3mm}
We use an $L^4_{t,x},L^4_{t,x},L^2_{t,x},L^{\infty}_{t,x},L^{\infty}_{t,x}$ H\"{o}lder inequality, and Proposition \ref{chiImprovedStrichartz} to deduce that this expression is:

$$\lesssim \frac{1}{(N_1^*)^2} \|v_{\gtrsim (N_1^*)^{\frac{1}{2}}}\|_{X^{0,\frac{3}{8}+}} \|\mathcal{D}v\|_{X^{0,\frac{3}{8}+}} \big(\frac{1}{(N_3)^{\frac{1}{2}-}} \|\mathcal{D}v\|_{X^{0,\frac{1}{2}+}}
\|v\|_{X^{0,\frac{1}{2}+}}\big) \|v\|_{X^{\frac{1}{2}+,\frac{1}{2}+}}^2$$
\vspace{2mm}
$$\lesssim \frac{1}{(N_1^*)^{3-}} \|\mathcal{D}v\|_{X^{0,\frac{1}{2}+}}^2 \|v\|_{X^{1,\frac{1}{2}+}}^4$$
\vspace{2mm}

Here we used the fact that $\|v_{\gtrsim (N_1^*)^{\frac{1}{2}}}\|_{X^{0,\frac{3}{8}+}}\lesssim \frac{1}{(N_1^*)^{\frac{1}{2}}}\|v\|_{X^{1,\frac{1}{2}+}}$.

\vspace{2mm}

Hence, the contribution from this Subcase is:

\begin{equation}
\label{eq:BigCase1Case2Subcase2}
\lesssim \frac{1}{(N_1^*)^{3-}}E^2(u(0))
\end{equation}
\vspace{3mm}
Combining $(\ref{eq:BigCase1Case1Subcase1}),(\ref{eq:BigCase1Case1Subcase2}),(\ref{eq:BigCase1Case2Subcase1}),
(\ref{eq:BigCase1Case2Subcase2})$, it follows that the contribution to $I_{N_1,N_2,N_3,N_4}$ coming from Big Case 1 is:

\begin{equation}
\label{eq:BigCase1Bound}
O(\frac{1}{(N_1^*)^{3-}}E^2(u(0))).
\end{equation}

\vspace{4mm}

\textbf{Big Case 2:} We recall that in this Big Case $N_2^* \sim N_3^*$.

\vspace{3mm}

From Proposition \ref{Proposition 4.2}, we observe that in Big Case 2, one has:

\begin{equation}
\label{eq:M4BigCase2}
M_4(\xi_1,\xi_2,\xi_3,\xi_4)=O(\frac{1}{(N_1^*)^3}\theta(N_1^*)\theta(N_2^*)N_3^*N_4^*)
\end{equation}

\vspace{2mm}

In Big Case 2, we argue in the same way as we did for the Hartree equation on $S^1$ in \cite{SoSt1}. The same argument as in Section 4.1. of the mentioned paper implies that the contribution to $I_{N_1,N_2,N_3,N_4}$ coming from Big Case 2 is:

\begin{equation}
\label{eq:BigCase2Bound}
O(\frac{1}{(N_1^*)^3}E^2(u(0))).
\end{equation}

\vspace{2mm}

We refer the reader to the proof in \cite{SoSt1}. Let us note that in the periodic setting, we could only get a decay factor of $\frac{1}{(N_1^*)^2}$.

\vspace{2mm}

We use $(\ref{eq:BigCase1Bound})$,$(\ref{eq:BigCase2Bound})$,$(\ref{eq:NjstarE2})$, and sum in the $N_j^*$ to deduce Lemma \ref{Bigstar} for $t_0=0$. By time-translation, the general claim follows.
\end{proof}
\vspace{3mm}

\begin{remark}
\label{Remark 4.1}
If we use the method of proof of Theorem \ref{Theorem 1} for the Hartree equation (here we just use mass and energy as conserved quantities), we can obtain the bound $\|u(t)\|_{H^s} \leq C(1+|t|)^{(s-1)+}$, which is a weaker result than Theorem \ref{Theorem 2} when $s$ is large.
\end{remark}

\section{Appendix A: Auxiliary results for the cubic nonlinear Schr\"{o}dinger equation.}

In Appendix A, we prove Proposition \ref{Proposition 3.2}

\begin{proof}

By continuity of Energy and Mass on $H^1$, both of the claims clearly hold for $n=1$. For higher $n$, we will need to work directly with the higher conserved quantities of (\ref{eq:cubicnls}). One can explicitly compute these quantities by means of a recursive formula. The formula that we use comes from \cite{FadTak,ZM}. Let $u$ be a solution of (\ref{eq:cubicnls}). Let us define a sequence of polynomials $(P_k)_{k \geq 1}$ by:

\begin{equation}
\label{eq:recursionformula}
\begin{cases}
P_1:=|u|^2,\\
P_{k+1}:=-i \bar{u} \frac{\partial}{\partial x}(\frac{P_k}{\bar{u}}) + \sum_{l=1}^{k-1} P_l P_{k-l},\,\, \mbox{for}\, k\geq 1.
\end{cases}
\end{equation}
Then, for all $k\geq 1$, $\int P_k \, dx$ is a conserved quantity for $(\ref{eq:cubicnls})$.

\vspace{2mm}

For the details, we refer the reader to \cite{FadTak}, more precisely to Page 53, where it is noted that formulas (4.19),(4.20),(4.34) in the textbook still remain valid for our equation. Let us now explicitly compute:

$$P_2=-i\bar{u}\frac{\partial}{\partial x}u.$$

$$P_3=-\bar{u}\frac{\partial^2}{\partial x^2}u + \frac{1}{2}|u|^4.$$

$$P_4=i\bar{u}\frac{\partial^3}{\partial x^3}u - i|u|^2 \bar{u} \frac{\partial}{\partial x}u.$$

\vspace{2mm}

The conserved quantity corresponding to $P_1$ is:

$$\int P_1 \, dx = \int |u|^2 dx\,\,=\mbox{\emph{Mass}}.$$
For the conserved quantities corresponding to $P_2,P_3$, we integrate by parts to obtain:

$$\int P_2 \, dx = -\frac{i}{2} \int (\bar{u}\frac{\partial}{\partial x}u - u \frac{\partial}{\partial x}\bar{u})dx\,\,
\sim \mbox{\emph{Momentum}}.$$

$$\int P_3 \, dx = \int \big| \frac{\partial}{\partial x}u \big|^2 dx + \frac{1}{2} \int |u|^4 dx\,\, \sim \mbox{\emph{Energy}}.$$
So, we recover the well-known conserved quantities this way.

We argue by induction to deduce that:

$$P_n=c \bar{u}\frac{\partial^{n-1}}{\partial x^{n-1}}u + l.o.t.$$
Again, by induction, we obtain that each lower-order term contains in total at most $n-3$ derivatives. It follows that the conserved quantity we want to study is:

$$E_n(u):=\int P_{2n+1} \, dx=\pm c \int \big| \frac{\partial^n}{\partial x^n}u \big|^2 dx + l.o.t.$$
\vspace{2mm}
Here, each lower-order term is the integral of a polynomial in $x$-derivatives of $u,\bar{u}$ containing in total at most $2n-2$ derivatives. If we integrate by parts, we can arrange so that at most $n$ derivatives fall on one factor, and that at most $n-2$ derivatives fall on all the other factors combined. By using H\"{o}lder's inequality \footnote{we estimate the factor with the most  derivatives, and an arbitrary other factor in $L^2$; the rest of the factors we estimate in $L^{\infty}$} and by Sobolev embedding, there exists a polynomial $Q_n=Q_n(x)$ s.t.

\begin{equation}
\label{eq:Enbound1}
E_n(u) \geq C (\|u\|_{\dot{H}^n}^2- Q_n(\|u\|_{H^{n-1}})\|u\|_{H^n}).
\end{equation}

\vspace{3mm}
Similarly, if we also use multilinearity, it follows that there exists a polynomial $R_n$ in $(x,y)$ s.t.

$$|E_n(u)-E_n(v)| \lesssim (\|u\|_{H^n}+\|v\|_{H^n})\|u-v\|_{H^n}+$$

\begin{equation}
\label{eq:Enbound2}
R_n(\|u\|_{H^{n-1}},\|v\|_{H^{n-1}})\|u-v\|_{H^{n}}.
\end{equation}
\vspace{2mm}
The fact that $E_n$ is continuous on $H^n$ follows immediately from (\ref{eq:Enbound2}). This proves the first part of the claim.

\vspace{2mm}

Furthermore, if we define:

$$\widetilde{E_n}(u):=E_n(u)+\|u\|_{L^2}^2,$$
then, by (\ref{eq:Enbound1}), it follows that:

$$\widetilde{E_n}(u) \geq C_n (\|u\|_{H^n}^2- Q_n(\|u\|_{H^{n-1}})\|u\|_{H^n}).$$

\vspace{2mm}

This bound in turn implies:

\begin{equation}
\label{eq:Hnbound}
\|u\|_{H^n}\leq \frac{1}{2}\Big(Q_n(\|u\|_{H^{n-1}}) + \sqrt{(Q_n(\|u\|_{H^{n-1}}))^2+ \frac{4}{C_n} \widetilde{E_n}(u)}\,\Big).
\end{equation}
\vspace{2mm}
We finally define:

\begin{equation}
\label{eq:Bn}
B_n(\Phi):=\frac{1}{2}\Big(Q_n(B_{n-1}(\Phi)) + \sqrt{(Q_n(B_{n-1}(\Phi)))^2+\frac{4}{C_n}\widetilde{E_n}(\Phi)}\,\Big).
\end{equation}
We combine the fact that $E_n$ is continuous on $H^n$, conservation of mass, (\ref{eq:Hnbound}), and argue by induction to deduce the second part of the claim if we define $B_n$ as in (\ref{eq:Bn}).

\end{proof}

\section{Appendix B: Auxiliary results for the Hartree equation.}

We first prove Proposition \ref{Proposition 4.2} assuming Lemma \ref{Lemma 4.3}.

\begin{proof}
Let us first recall that:

\begin{equation}
\label{eq:VhatHartree}
\widehat{V} \in L^{\infty}
\end{equation}

\vspace{2mm}

As before, we consider $|\xi_j| \sim N_j$ for dyadic integers $N_1,N_2,N_3,N_4$. We order the $N_j$ to obtain $N_j^*$, for $j=1,\ldots,4$, s.t. $N_1^* \geq N_2^* \geq N_3^* \geq N_4^*$. Let's recall the localization $(\ref{eq:NjstarE2})$. By symmetry, let us also consider WLOG $N_1^* \sim N_1$.

\vspace{2mm}

We consider the following cases:

\vspace{3mm}

\textbf{Case 1:} $N_2^* \gg N_3^*.$

\vspace{2mm}

We must consider several subcases:

\vspace{2mm}

\textbf{Subcase 1:} $N_2^* \sim N_2.$

\vspace{2mm}

Since $\xi_1+\xi_2+\xi_3+\xi_4=0$, we obtain:

\begin{equation}
\label{eq:factorizationpropertyHartree}
|\xi_1^2-\xi_2^2+\xi_3^2-\xi_4^2|=2|(\xi_1+\xi_2)(\xi_1+\xi_4)|
\end{equation}

\vspace{2mm}

In this Subcase, this expression is:

$$\sim N_1^*|\xi_1+\xi_2|.$$
By Lemma \ref{Lemma 4.3}, we know:

$$|(\theta(\xi_1))^2-(\theta(\xi_2))^2| \lesssim ||\xi_1|-|\xi_2|| \frac{(\theta(\xi_1))^2}{|\xi_1|}
\lesssim |\xi_1+\xi_2| \frac{\theta(N_1^*)\theta(N_2^*)}{N_1^*}$$
Similarly, assuming WLOG that $|\xi_3| \geq |\xi_4|$, we use Lemma \ref{Lemma 4.3}, and the fact that $\frac{(\theta(\xi_3))^2}{|\xi_3|} \lesssim \frac{(\theta(\xi_1))^2}{|\xi_1|}$, if $|\xi_3| \geq N$, and $(\theta(\xi_3))^2-(\theta(\xi_4))^2=0$, if $|\xi_3| \leq N$ to deduce that:

$$|(\theta(\xi_3))^2-(\theta(\xi_4))^2| \lesssim |\xi_3+\xi_4| \frac{\theta(N_1^*)\theta(N_2^*)}{N_1^*}=
|\xi_1+\xi_2| \frac{\theta(N_1^*)\theta(N_2^*)}{N_1^*}$$
Combining the last three bounds and $(\ref{eq:VhatHartree})$, we obtain:

\begin{equation}
\label{eq:Case1Subcase1Hartree}
\Psi=O(\frac{1}{(N_1^*)^2} \theta(N_1^*)\theta(N_2^*)).
\end{equation}

\vspace{3mm}

\textbf{Subcase 2:} $N_2^* \sim N_3.$

\vspace{2mm}

In this subcase, we don't expect any cancelation in neither the numerator nor the denominator. So, we just estimate the numerator as $O(\theta(N_1^*)\theta(N_2^*))$, and we estimate the denominator as $\sim (N_1^*)^2$.
Consequently:

\begin{equation}
\label{eq:Case1Subcase2Hartree}
\Psi=O(\frac{1}{(N_1^*)^2} \theta(N_1^*)\theta(N_2^*)).
\end{equation}

\vspace{4mm}

\textbf{Case 2:} $N_2^* \sim N_3^*.$

\vspace{2mm}

As before, we consider two subcases:

\vspace{2mm}

\textbf{Subcase 1:} $N_3^* \gg N_4^*.$

\vspace{2mm}

It suffices to WLOG consider when $N_2^* \sim N_2, N_3^* \sim N_3, N_4^* \sim N_4$.

\vspace{2mm}

We have:

$$|\xi_1^2-\xi_2^2+\xi_3^2-\xi_4^2|=2|(\xi_1+\xi_2)(\xi_1+\xi_4)| \sim N_1^*|\xi_1+\xi_2|.$$
We argue now as in Subcase 1 of Case 1 to obtain:

$$\Psi=O\big(\frac{1}{(N_1^*)^2} \theta(N_1^*)\theta(N_2^*)\big).$$
Since $N_3^* \sim N_1^*$, in this subcase, we obtain:

\begin{equation}
\label{eq:Case2Subcase1Hartree}
\Psi=O\big(\frac{1}{(N_1^*)^3} \theta(N_1^*)\theta(N_2^*)N_3^*\big).
\end{equation}

\vspace{2mm}

\textbf{Subcase 2:} $N_1^* \sim N_2^* \sim N_3^* \sim N_4^*.$

\vspace{2mm}

We know:

$$|\xi_1^2-\xi_2^2+\xi_3^2-\xi_4^2| \sim |(\xi_1+\xi_2)(\xi_1+\xi_4)|.$$
We must consider several sub-subcases.

\vspace{2mm}

\textbf{Sub-subcase 1:} $|\xi_1+\xi_2| \ll 1,\,\,|\xi_1+\xi_4| \ll 1.$

\vspace{2mm}

Since $\xi_1+\xi_2+\xi_3+\xi_4=0$, we get:

$$\xi_3+(\xi_1+\xi_2)=-\xi_4$$

$$\xi_3+(\xi_1+\xi_4)=-\xi_2$$

$$\xi_3+(\xi_1+\xi_2)+(\xi_1+\xi_4)=\xi_1.$$
From the previous identities, the Double Mean Value Theorem $(\ref{eq:DoubleMVT})$, and $(\ref{eq:thetadoubleprime})$, we obtain that:

$$|(\theta(\xi_1))^2-(\theta(\xi_2))^2+(\theta(\xi_3))^2-(\theta(\xi_4))^2|=$$
\vspace{2mm}
$$=|(\theta(\xi_3+(\xi_1+\xi_2)+(\xi_1+\xi_4)))^2-(\theta(\xi_3+(\xi_1+\xi_4)))^2+(\theta(\xi_3+(\xi_1+\xi_2)))^2
-(\theta(\xi_4))^2|$$
\vspace{2mm}
$$\lesssim |\xi_1+\xi_2| |\xi_1+\xi_4| |(\theta^2)''(\xi_3)|\lesssim |\xi_1+\xi_2||\xi_1+\xi_4|\,\frac{(\theta(\xi_3))^2}{|\xi_3|^2}$$
\vspace{2mm}
$$\lesssim |\xi_1+\xi_2||\xi_1+\xi_4| \frac{\theta(N_1^*) \theta(N_2^*)}{(N_1^*)^2}$$
\vspace{2mm}
So, in this sub-subcase, we obtain that:

$$\Psi=O(\frac{1}{(N_1^*)^2} \theta(N_1^*)\theta(N_2^*))=$$
\vspace{2mm}
\begin{equation}
\label{eq:Case2Subcase2Subsubcase1Hartree}
=O(\frac{1}{(N_1^*)^4}\theta(N_1^*)\theta(N_2^*)N_3^*N_4^*)
\end{equation}

\vspace{2mm}

\textbf{Sub-subcase 2:} $|\xi_1+\xi_4| \gtrsim 1.$

\vspace{2mm}

Here:

$$|\xi_1^2-\xi_2^2+\xi_3^2-\xi_4^2|= 2|\xi_1+\xi_2||\xi_1+\xi_4| \gtrsim |\xi_1+\xi_2|=|\xi_3+\xi_4|.$$
Hence, by Lemma \ref{Lemma 4.3}:

$$\Psi=O\big( \frac{|(\theta(\xi_1))^2-(\theta(\xi_2))^2}{|\xi_1+\xi_2|} + \frac{|(\theta(\xi_3))^2-(\theta(\xi_4))^2|}{|\xi_3+\xi_4|}\big)=$$
\vspace{2mm}
$$=O(\frac{1}{|\xi_1|}(\theta(\xi_1))^2)=O(\frac{1}{N_1^*}\theta(N_1^*)\theta(N_2^*))=$$
\vspace{2mm}
\begin{equation}
\label{eq:Case2Subcase2Subsubcase2Hartree}
O(\frac{1}{(N_1^*)^3} \theta(N_1^*)\theta(N_2^*)N_3^*N_4^*)
\end{equation}

\vspace{2mm}

\textbf{Sub-subcase 3:} $|\xi_1+\xi_2| \gtrsim 1.$

\vspace{2mm}

We group the terms in the numerator as:

$$\big((\theta(\xi_1))^2-(\theta(\xi_4))^2\big)+\big((\theta(\xi_3))^2-(\theta(\xi_2))^2\big)$$
Then, we argue as in the previous sub-subcase to obtain:

\vspace{2mm}
\begin{equation}
\label{eq:Case2Subcase2Subsubcase3Hartree}
O\big(\frac{1}{(N_1^*)^3} \theta(N_1^*)\theta(N_2^*)N_3^*N_4^*\big)
\end{equation}

\end{proof}

Let us now prove Lemma \ref{Lemma 4.3}.

\begin{proof}

We have to consider five cases:

\begin{enumerate}
\item $N \leq |y|,\,2N \leq |x|$
\item $N \leq |y| \leq |x| \leq 2N$
\item $|y| \leq |x| \leq N$
\item $|y| \leq N,\, 2N \leq |x|$
\item $|y| \leq N \leq |x| \leq 2N$
\end{enumerate}

\vspace{2mm}

We consider each case separately:

\vspace{3mm}

\begin{enumerate}
\item $ |(\theta(x)^2-(\theta(y))^2|\, \leq (|x|-|y|)\, \sup_{[|y|,|x|]} |(\theta^2)'(z)| \leq (|x|-|y|)\, \sup_{[N,|x|]}|(\theta^2)'(z)|$

\vspace{2mm}

By using $(\ref{eq:thetaprime})$, this expression is:

$$\lesssim (|x|-|y|) \sup_{[N,|x|]}\big(\frac{(\theta(z))^2}{|z|}\big) \lesssim
(|x|-|y|) \sup_{[2N,|x|]}\big(\frac{(\theta(z))^2}{|z|}\big)=$$

$$=(|x|-|y|) \sup_{[2N,|x|]}\frac{|z|^{2s-1}}{N^{2s}}= (|x|-|y|)\frac{|x|^{2s-1}}{N^{2s}}
= (|x|-|y|)\frac{(\theta(x))^2}{|x|}.$$

\vspace{2mm}

\item $|(\theta(x))^2-(\theta(y))^2| \leq (|x|-|y|) \sup_{[|y|,|x|]}|(\theta^2)'(z)|
\lesssim (|x|-|y|) \sup_{[|y|,|x|]}\big(\frac{(\theta(z))^2}{|z|}\big)$

For $z \in [|y|,|x|]$, one has:

$$\frac{(\theta(z))^2}{|z|}\sim \frac{(\theta(N))^2}{|N|} \sim \frac{(\theta(x))^2}{|x|}.$$

Hence, we get the wanted bound in this case.

\vspace{2mm}

\item In this case: $(\theta(x))^2-(\theta(y))^2=0.$

\vspace{2mm}

\item $|(\theta(x))^2-(\theta(y))^2| = |\frac{|x|^{2s}}{N^{2s}}-(\theta(N))^2|$, and we argue as in the first case.

\vspace{2mm}

\item $|(\theta(x))^2-(\theta(y))^2| = |(\theta(x))^2-(\theta(N))^2|$, and we argue as in the second case.

\end{enumerate}
\vspace{2mm}

Lemma \ref{Lemma 4.3} now follows.

\end{proof}

\vspace{3mm}

\section{Appendix C: The derivative nonlinear Schr\"{o}dinger equation:}

\vspace{2mm}

In this Appendix, we give a brief sketch of the proof of $(\ref{eq:dnlsbound})$.

\vspace{2mm}

We don't consider derivative nonlinear Schr\"{o}dinger equation directly. Rather, we argue as in \cite{Hay,HayOz1,HayOz2}, and we apply to $(\ref{eq:dnls})$ the following gauge transform:

\begin{equation}
\label{eq:gaugetransform}
\mathcal{G}f(x):=e^{-i\int_{-\infty}^x |f(y)|^2dy}f(x).
\end{equation}

\vspace{2mm}

For $u$ a solution of (\ref{eq:dnls}),we take $w:=\mathcal{G}u.$
Then, it can be shown that $w$ solves:

\begin{equation}
\label{eq:gaugednls}
\begin{cases}
i w_t + \Delta w=-i w^2\bar{w}_x -\frac{1}{2}|w|^4w \\
w(x,0)=w_0(x)=\mathcal{G}\Phi(x),x \in \mathbb{R},t \in \mathbb{R}.
\end{cases}
\end{equation}
\vspace{2mm}
The equation (\ref{eq:gaugednls}) has as a corresponding Hamiltonian:

\begin{equation}
\label{eq:Hamiltonian}
E(f):=\int \partial_x f \partial_x \bar{f}dx - \frac{1}{2} Im \int f \bar{f} f \partial_x \bar{f}dx.
\end{equation}
\vspace{2mm}
Although the problem is not defocusing a priori, in \cite{CKSTT},\cite{CKSTT5}, it is noted that the smallness condition (\ref{eq:smallnessassumption}) guarantees that the energy $E(w(t))$ is positive and that it gives us a priori bounds on $\|w(t)\|_{H^1}$.

\vspace{3mm}

It can be shown that the gauge transform satisfies the following boundedness property:

\newtheorem*{Gaugetransformbound}{Gauge transform bound}

\begin{Gaugetransformbound}

For $s \geq 1$, there exists a polynomial $P_s=P_s(x)$ such that:

$$\|\mathcal{G}f\|_{H^s} \leq P_s(\|f\|_{H^1})\|f\|_{H^s},$$

$$\|\mathcal{G}^{-1}f\|_{H^s}\leq P_s(\|f\|_{H^1})\|f\|_{H^s}.$$

\end{Gaugetransformbound}

\vspace{2mm}

From the bi-continuity of gauge transform, and the uniform bounds on $\|w(t)\|_{H^1}$, it suffices to prove for solutions of (\ref{eq:gaugednls}) the bounds that we want to hold for solutions of the derivative NLS.

\vspace{2mm}

One can show that a local-in-time estimate, analogous to Proposition \ref{Proposition 4.1}, holds for $(\ref{eq:gaugednls})$. The key is to use the following:

\newtheorem*{trilinearestimate}{Trilinear Estimate}

\begin{trilinearestimate}

Let $s \geq 1, b \in (\frac{1}{2},\frac{5}{8}],b'>\frac{1}{2}$, then for $v_1,v_2,v_3:
\mathbb{R} \times \mathbb{R} \rightarrow \mathbb{C}$, the following estimate holds:

$$\|v_1 v_2 \overline{(v_3)}_x \|_{X^{s,b-1}} \lesssim
\|v_1\|_{X^{1,b'}}\|v_2\|_{X^{1,b'}}\|v_3\|_{X^{s,b'}}$$
\begin{equation}
\label{eq:trilinearniestimate}
+\|v_1\|_{X^{1,b'}}\|v_2\|_{X^{s,b'}}\|v_3\|_{X^{1,b'}}
+\|v_1\|_{X^{s,b'}}\|v_2\|_{X^{1,b'}}\|v_3\|_{X^{1,b'}}.
\end{equation}

\end{trilinearestimate}

\vspace{2mm}

This estimate is the analogue of Proposition 2.4. in \cite{Tak}, where the identical statement is proved in the context of low regularities. The proof for $s\geq 1$ is similar, with minor modifications.

\vspace{2mm}

We now argue as in Theorem \ref{Theorem 2}, by using the technique of \emph{higher modified energies}. We define $E^1$ as before. We consider the higher modified energy $E^2$ given by:

\begin{equation}
\label{eq:E2dnls}
E^2(w):=E^1(w)+\lambda_4(M_4;w).
\end{equation}

Using the equation $(\ref{eq:gaugednls})$, it follows that a good choice for the multiplier $M_4$ on the set $\Gamma_4$ is:

\begin{equation}
\label{eq:M4dnls}
M_4 \sim \frac{(\theta(\xi_1))^2\xi_3+(\theta(\xi_2))^2\xi_4+(\theta(\xi_3))^2\xi_1+(\theta(\xi_4))^2\xi_2}
{\xi_1^2-\xi_2^2+\xi_3^2-\xi_4^2}\,\,
\end{equation}

\vspace{2mm} We define the ordered dyadic localizations $N_j^*$ as before. With this notation, one can show the following:

\newtheorem*{M4bound}{Multiplier bound}

\begin{M4bound}
On $\Gamma_4$, one has the pointwise bound:
\begin{equation}
\label{eq:M4bounddnls}
|M_4|\lesssim \frac{1}{N_1^*}\theta(N_1^*)\theta(N_2^*).
\end{equation}
\end{M4bound}

By construction of $M_4$, we obtain:

$$\frac{d}{dt}E^2(w(t))=\frac{d}{dt}E^1(w)+\frac{d}{dt}
\lambda_4(M_4;w)$$

\begin{equation}
\label{eq:E2primednls}
=\lambda_6(\sigma_6;w)+\lambda_6(M_6;w)+\lambda_8(M_8;w).
\end{equation}

Here:

\begin{equation}
\label{eq:sigma6dnls}
\sigma_6 = (\theta(\xi_1))^2-(\theta(\xi_2))^2+(\theta(\xi_3))^2-(\theta(\xi_4))^2+(\theta(\xi_5))^2-(\theta(\xi_6))^2.
\end{equation}

$$M_6 \sim M_4(\xi_{123},\xi_4,\xi_5,\xi_6)\xi_2+M_4(\xi_1,\xi_{234},\xi_5,\xi_6)\xi_3$$
\begin{equation}
\label{eq:M6dnls}
+M_4(\xi_1,\xi_2,\xi_{345},\xi_6)\xi_4 + M_4(\xi_1,\xi_2,\xi_3,\xi_{456})\xi_5.
\end{equation}

$$M_8 \sim M_4(\xi_{12345},\xi_6,\xi_7,\xi_8)-M_4(\xi_1,\xi_{23456},\xi_7,\xi_8)$$
\begin{equation}
\label{eq:M8dnls}
+M_4(\xi_1,\xi_2,\xi_{34567},\xi_8)-M_4(\xi_1,\xi_2,\xi_3,\xi_{45678}).
\end{equation}

\vspace{2mm}

Using $(\ref{eq:E2primednls})$ and $(\ref{eq:M4bounddnls})$, we can argue similarly as in the proof of Theorem \ref{Theorem 2} to deduce that:

\begin{equation}
\label{eq:E2incrementdnls}
|E^2(w(t_0+\delta))-E^2(w(t_0))|\lesssim \frac{1}{N^{\frac{1}{2}-}} E^2(w(t_0)).
\end{equation}

\vspace{2mm}

The bound for the derivative NLS follows from $(\ref{eq:E2incrementdnls})$. $\Box$

\end{document}